\theoremstyle{plain}
\newtheorem{lemma}{Lemma}[section]
\newtheorem{theorem}[lemma]{Theorem}
\newtheorem{proposition}[lemma]{Proposition}
\newtheorem{corollary}[lemma]{Corollary}
\newtheorem*{stat}{\name}
\newcommand{\name}{testing}
\theoremstyle{definition}
\newtheorem{definition}[lemma]{Definition}
\newtheorem{problem}{Problem}
\theoremstyle{remark}
\newtheorem{notation}[lemma]{Notation}
\newenvironment{all}[1]{\renewcommand{\name}{#1}\begin{stat}}
                         {\end{stat}}
\newcommand{\qedc}{{\qed}~{\rm Claim~{\theclaim}.}}
\newcommand{\qedsc}{{\qed}~{\rm Claim.}}
\newcommand{\case}[1]%
{\smallskip\noindent\textbf{\textit{Case}\ {#1}.}}
\numberwithin{equation}{section}
\numberwithin{figure}{section}
\newcommand{\pup}[1]{\textup{(}{#1}\textup{)}}
\newcommand{\bx}{{\boldsymbol{x}}}
\newcommand{\bA}{{\boldsymbol{A}}}
\newcommand{\bB}{{\boldsymbol{B}}}
\newcommand{\bD}{{\boldsymbol{D}}}
\newcommand{\bG}{{\boldsymbol{G}}}
\newcommand{\bL}{{\boldsymbol{L}}}
\newcommand{\bM}{{\boldsymbol{M}}}
\newcommand{\bS}{{\boldsymbol{S}}}
\newcommand{\bX}{{\boldsymbol{X}}}
\newcommand{\xe}{\boldsymbol{e}}
\newcommand{\xf}{\boldsymbol{f}}
\newcommand{\xu}{\boldsymbol{u}}
\newcommand{\les}{\leqslant}
\DeclareMathOperator{\kur}{kur}
\DeclareMathOperator{\brd}{br}
\DeclareMathOperator{\wdt}{wd}
\DeclareMathOperator{\card}{card}
\DeclareMathOperator{\cf}{cf}
\newcommand{\crit}[2]{\mathrm{crit}({#1};{#2})}
\DeclareMathOperator{\Con}{Con}
\DeclareMathOperator{\Conc}{Con_c}
\DeclareMathOperator{\rV}{V}
\DeclareMathOperator{\Id}{Id}
\DeclareMathOperator{\J}{J}
\DeclareMathOperator{\ari}{ar}
\newcommand{\Pow}{\mathfrak{P}}
\newcommand{\res}{\mathbin{\restriction}}
\newcommand{\cA}{\mathcal{A}}
\newcommand{\cB}{\mathcal{B}}
\newcommand{\cC}{\mathcal{C}}
\newcommand{\cD}{\mathcal{D}}
\newcommand{\cS}{\mathcal{S}}
\newcommand{\cV}{\mathcal{V}}
\newcommand{\jirr}{join-ir\-re\-duc\-i\-ble}
\newcommand{\eps}{\varepsilon}
\newcommand{\es}{\varnothing}
\newcommand{\into}{\hookrightarrow}
\newcommand{\mono}{\rightarrowtail}
\newcommand{\famm}[2]{(#1\mid#2)}
\newcommand{\set}[1]{\{#1\}}
\newcommand{\setm}[2]{\set{#1\mid#2}}
\newcommand{\ol}[1]{\overline{#1}}
\newcommand{\two}{\mathbf{2}}
\newcommand{\dnw}{\mathbin{\downarrow}}
\newcommand{\ddnw}{\mathbin{\downdownarrows}}
\newcommand{\upw}{\mathbin{\uparrow}}
\newcommand{\Upw}{\mathbin{\Uparrow}}
\newcommand{\sor}{\mathbin{\triangledown}}
\newcommand{\Sor}{\mathbin{\bigtriangledown}}
\newcommand{\jz}{$(\vee,0)$}
\newcommand{\jzu}{$(\vee,0,1)$}
\newcommand{\jzs}{\jz-semi\-lat\-tice}
\newcommand{\jzus}{\jzu-semi\-lat\-tice}
\newcommand{\jzh}{\jz-ho\-mo\-mor\-phism}
\newcommand{\jzuh}{\jzu-ho\-mo\-mor\-phism}
\newcommand{\jzue}{\jzu-em\-bed\-ding}
\newcommand{\js}{join-sem\-i\-lat\-tice}
\newcommand{\ajs}{al\-most join-sem\-i\-lat\-tice}
\newcommand{\pjs}{pseu\-do join-sem\-i\-lat\-tice}
\newcommand{\rB}{\mathrm{B}}
\begin{document}

\title[Infinite combinatorial issues]{Infinite combinatorial issues raised by lifting problems in universal algebra}

\author[F.~Wehrung]{Friedrich Wehrung}
\address{LMNO, CNRS UMR 6139\\
D\'epartement de Math\'ematiques, BP 5186\\
Universit\'e de Caen, Campus 2\\
14032 Caen cedex\\
France}
\email{wehrung@math.unicaen.fr, fwehrung@yahoo.fr}
\urladdr{http://www.math.unicaen.fr/\~{}wehrung}

\date{\today}

\subjclass[2000]{03E05, 06A07, 03E50, 03E65, 06B15, 08A30, 18A22, 18A25, 18A30, 18A35}

\keywords{Algebra; congruence; variety; critical point; ladder; morass; Martin's Axiom; amalgamation; lifter; \pjs; supported; \ajs; larder; Kuratowski's Free Set Theorem; Kuratowski index; free set; cardinal; poset; order-dimension; breadth; truncated cube}

\begin{abstract}
The \emph{critical point} between varieties~$\cA$ and~$\cB$ of algebras is defined as the least cardinality of the semilattice of compact congruences of a member of~$\cA$ but of no member of~$\cB$, if it exists. The study of critical points gives rise to a whole array of problems, often involving lifting problems of either diagrams or objects, with respect to functors. These, in turn, involve problems that belong to infinite combinatorics. We survey some of the combinatorial problems and results thus encountered. The corresponding problematic is articulated around the notion of a \emph{$k$-ladder} (for proving that a critical point is \emph{large}), \emph{large free set theorems} and the classical notation $(\kappa,r,\lambda)\rightarrow m$ (for proving that a critical point is \emph{small}). In the middle, we find \emph{$\lambda$-lifters} of posets and the relation $(\kappa,{<}\lambda)\leadsto P$, for infinite cardinals~$\kappa$ and~$\lambda$ and a poset~$P$.
\end{abstract}

\maketitle

\section{Introduction}\label{S:Intro}
The main goal of the present paper is to give a rough survey of some \emph{combinatorial}, often \emph{set-theoretical} issues raised by recent work on representation problems in \emph{universal algebra}. It is an expanded version of the author's talk at the BLAST~2010 conference in Boulder, Colorado.

Let us first quickly describe the universal algebraic background. A \emph{signature} (or \emph{similarity type}) consists of a set~$\Sigma$, together with a map $\ari\colon\Sigma\to\set{0,1,2,\dots}$ (the \emph{arity map}). An \emph{algebra} is a structure $\bA=(A,\famm{f^\bA}{f\in\Sigma})$, where~$A$ is a nonempty set and for each $f\in\Sigma$, the ``fundamental operation'' $f^\bA$ is a map from~$A^{\ari(f)}$ to~$A$, identified with the unique constant in its range in case~$\ari(f)=0$. A \emph{congruence} of~$\bA$ is an equivalence relation on~$A$ which is compatible with all the fundamental operations of~$\bA$. The set~$\Con\bA$ of all congruences of~$\bA$ forms an \emph{algebraic lattice} under set inclusion, and we denote by $\Conc\bA$ its semilattice of compact elements. Hence~$\Conc\bA$ is the \jzs\ of all finitely generated congruences of~$\bA$. It is a central question in universal algebra to determine which algebraic lattices can be represented as~$\Con\bA$, for an algebra~$\bA$ in a given class of algebras. As it turns out that infinite \emph{\jzs s} are often more conveniently dealt with than infinite algebraic \emph{lattices}, the class of questions above can be rephrased as which \jzs s can be represented as~$\Conc\bA$, for an algebra~$\bA$ in a given class of algebras. A \emph{variety of algebras} is the class of all the algebras satisfying a given set of identities (in a given signature).

Many combinatorial features of the work under discussion in the present paper are contained in the notion of \emph{critical point} between varieties of algebras. The following definition was introduced in the survey paper T\r{u}ma and Wehrung~\cite{CLPSurv}, then slightly amended in Gillibert~\cite{GillTh,Gill1}. For a class~$\cC$ of algebras, we set
 \[
 \Conc\cC:=\setm{\bS}{(\exists\bA\in\cC)(\bS\cong\Conc\bA)}\,,
 \]
the \emph{compact congruence class} of~$\cC$.

\begin{definition}\label{D:CritPt}
The \emph{critical point} $\crit{\cA}{\cB}$, between classes~$\cA$ and~$\cB$ of algebras, is the least possible cardinality of a member of $(\Conc\cA)\setminus(\Conc\cB)$ if $\Conc\cA\not\subseteq\Conc\cB$, and~$\infty$ if~$\Conc\cA\subseteq\Conc\cB$).
\end{definition}

Thus the critical point measures the ``containment defect'' of~$\Conc\cA$ into~$\Conc\cB$. It turns out that estimating the critical point between varieties of algebras, in particular varieties of lattices, is a highly nontrivial task, of which we shall give a rough outline in Section~\ref{S:CritPt}. Proving that a critical point is \emph{large} requires tools called \emph{ladders}; we present a foretaste of the methods used in Sections~\ref{S:ExpleUA} (for $1$-ladders) and~\ref{S:Al1} (for $2$-ladders). Although no universal-algebraic use of $3$-ladders has been found so far (in my optimistic moments I do hope that there will be some), these objects raise fascinating set-theoretical questions. Furthermore, the failure of three-dimensional amalgamation is, in some sense, at the basis of nontrivial counterexamples (cf. R\r{u}\v{z}i\v{c}ka, T\r{u}ma, and Wehrung~\cite{RTW} or Wehrung~\cite{CLP}), and one cannot completely discard the possibility of future uses of that property, see Section~\ref{S:Con3}.

On the other hand, proving that a critical point is \emph{small} involves quite different combinatorial tools, articulated around the notion of a \emph{$\lambda$-lifter} (Section~\ref{S:CLL}) and the more amenable relation $(\kappa,{<}\lambda)\leadsto P$, for infinite cardinals~$\kappa$ and~$\lambda$ and a poset~$P$ (cf. Section~\ref{S:Kurat}). It turns out that the relation $(\kappa,{<}\lambda)\leadsto P$, when specialized to \emph{truncated cubes}~$P$, gives a relation known to infinite combinatorists, namely the $(\kappa,r,\lambda)\rightarrow m$ relation (cf. Notation~\ref{Not:arrRel} and Proposition~\ref{P:rightarr2leadsto}). The more algebraic viewpoint brought by the $(\kappa,{<}\lambda)\leadsto P$ framework made it possible, in Gillibert and Wehrung~\cite{GiWe1}, to establish new relations of the form $(\kappa,r,\lambda)\rightarrow m$, of which we give an overview in Section~\ref{S:LargeFree}.

In the middle of ``large critical point results'' (involving ladders) or ``small critical point results'' (involving the relation $(\kappa,{<}\lambda)\leadsto P$), there is a work of category theory (cf. Gillibert and Wehrung~\cite{Larder}), introducing the categorical notion of a \emph{larder} and the combinatorial notion of a \emph{lifter} (cf. Section~\ref{S:CLL}). These tools are mainly designed to introduce, and use, the \emph{Condensate Lifting Lemma} CLL, and its main precursor the \emph{Armature Lemma}. These results make it possible to turn \emph{diagram counterexamples} to \emph{object counterexamples}. While there are surprisingly many statements that can be proved that way (beginning with those involving critical points), we present in Section~\ref{S:BowTie} a quite isolated diagram counterexample, due to T\r{u}ma and the author, for which there is still no known associated object counterexample.
 
\section{Basic concepts}\label{S:Basic}
\subsection{Set theory}\label{Su:BasicSet}
We shall use basic set-theoretical notation and terminology about ordinals and cardinals. 
We denote by~$f``X$ the image of a set~$X$ under a function~$f$. Cardinals are initial ordinals. We denote by~$\cf\alpha$ the cofinality of an ordinal~$\alpha$. We denote by $\omega:=\set{0,1,2,\dots}$ the first limit ordinal, mostly denoted by $\aleph_0$ in case it is viewed as a cardinal. We identify every non-negative integer~$n$ with the finite set $\set{0,1,\dots,n-1}$ (so $0=\es$). We denote by~$\kappa^+$ the successor cardinal of a cardinal~$\kappa$, and we define~$\kappa^{+n}$, for a non-negative integer~$n$, by $\kappa^{+0}:= \kappa$ and $\kappa^{+(n+1)}=(\kappa^{+n})^+$.

We denote by~$\Pow(X)$ the powerset of a set~$X$, and we set
 \begin{align*}
 [X]^\kappa&:=\setm{Y\in\Pow(X)}{\card Y=\kappa}\,,\\
 [X]^{<\kappa}&:=\setm{Y\in\Pow(X)}{\card Y<\kappa}\,,\\
 [X]^{{\les}\kappa}&:=\setm{Y\in\Pow(X)}{\card Y\leq\kappa}\,,
 \end{align*}
for every cardinal~$\kappa$.

By ``countable'' we shall always mean ``at most countable''.

\subsection{Partially ordered sets (posets)}\label{Su:Posets}
All our posets will be nonempty. For posets~$P$ and~$Q$, a map $f\colon P\to Q$ is \emph{isotone} if $x\leq y$ implies that $f(x)\leq f(y)$, for all $x,y\in P$.

We denote by~$0_P$ the least element of~$P$ if it exists. An element~$p$ in a poset~$P$ is \emph{\jirr} if $p=\bigvee X$ implies that $p\in X$, for every (possibly empty) finite subset~$X$ of~$P$; we denote by $\J(P)$ the set of all \jirr\ elements of~$P$, endowed with the induced partial ordering. We set
 \begin{align*}
 Q\dnw X&:=\setm{q\in Q}{(\exists x\in X)(q\leq x)}\,,\\
 Q\ddnw X&:=\setm{q\in Q}{(\exists x\in X)(q<x)}\,,\\
 Q\Upw X&:=\setm{q\in Q}{(\forall x\in X)(x\leq q)}\,
  \end{align*}
for all subsets~$Q$ and~$X$ of~$P$; in case $X=\set{a}$ is a singleton, then we shall write $Q\dnw a$ instead of~$Q\dnw\set{a}$, and so on. We say that~$P$ is \emph{lower finite} if $P\dnw a$ is finite for each $a\in P$. A subset~$Q$ of~$P$ is a \emph{lower subset of~$P$} if~$P\dnw Q=Q$. We say that~$P$ is a \emph{tree} if~$P$ has a smallest element and~$P\dnw a$ is a chain for each $a\in P$. An \emph{ideal} of a poset~$P$ is a nonempty, upward directed, lower subset of~$P$.

We recall the definitions of a few classical numerical invariants associated to a finite poset~$P$:
\begin{itemize}
\item[---] The \emph{width} of~$P$, denoted by $\wdt(P)$, is the maximal cardinality of a pairwise incomparable subset of~$P$.

\item[---] The \emph{breadth} of~$P$, denoted by $\brd(P)$, is the least natural number~$m$ such that for all $x_i,y_i\in P$, for $0\leq i\leq m$, if $x_i\leq y_j$ for all $i\neq j$, then there exists~$i$ such that $x_i\leq y_i$.

\item[---] The \emph{order-dimension} of~$P$, denoted by $\dim(P)$, is the least possible natural number~$m$ such that~$P$ embeds, as a poset, into a product of~$m$ chains.
\end{itemize}

A \jzs~$\bS$ is \emph{distributive} if for all $a,b,c\in S$, if $c\leq a\vee b$, then there are $x\leq a$ and $y\leq b$ in~$S$ such that $c=x\vee y$. Equivalently, the ideal lattice~$\Id\bS$ of~$\bS$ is a distributive lattice (cf. Gr\"atzer~\cite[Section~II.5]{GLT2}).

\section{An example in universal algebra: representing countable distributive semilattices}\label{S:ExpleUA}

Let us start with an example. We are given a class~$\cC$ of algebras, and we assume that every \emph{finite Boolean} lattice is isomorphic to~$\Conc\bA$ for some $\bA\in\cC$. Can we extend this representation result to \emph{countable} distributive \jzs s?

Without additional assumptions, this cannot be done. For example, let $\cC:=\cD$ be the variety of all \emph{distributive lattices}. Then every finite Boolean lattice~$\bB$ belongs to~$\Conc\cD$ (for $\Conc\bB\cong\bB$), but for any $\bD\in\cD$, the \jzs\ $\Conc\bD$ is generalized Boolean, thus it is a lattice; and many distributive \jzs s (even countable ones) are not lattices.

Can we find reasonable assumptions, on the class~$\cC$, that make it possible to represent any \emph{countable} distributive \jzs, starting with a \emph{finite} representability result?

Here is the required finite representability result.

\begin{all}{One-dimensional amalgamation property for $\Conc$}
\textup{
For each $\bA\in\cC$, each finite Boolean semilattice~$\bS$, and each \jzh\ $\varphi\colon\Conc\bA\to\bS$, there are~$\bB\in\cC$, a homomorphism $f\colon\bA\to\bB$, and an isomorphism $\eps\colon\Conc\bB\to\bS$ such that $\varphi=\eps\circ\Conc f$.}
\end{all}

The statement above involves a yet undefined notation, namely~$\Conc f$, where~$f$ is no longer an \emph{object}, but a \emph{morphism}. Nor surprisingly, $\Conc f$ is defined as the map from~$\Conc\bA$ to~$\Conc\bB$ that to every compact congruence~$\alpha$ of~$\bA$ associates the congruence of~$\bB$ generated by $\setm{(f(x),f(y))}{(x,y)\in\alpha}$. The map $\Conc f$ is a \jzh\ from~$\Conc\bA$ to~$\Conc\bB$. We obtain the missing piece of information that is necessary in order to understand the ``amalgamation'' statement above, with a plus about \emph{direct limits} (called, in category theory, \emph{directed colimits}). The one-dimensional amalgamation property is illustrated in Figure~\ref{Fig:Con1}.

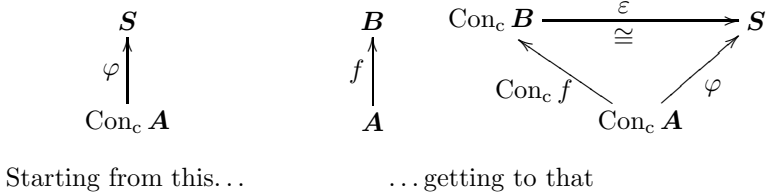
\begin{figure}[htb]
 \[
\xymatrixrowsep{2pc}\xymatrixcolsep{1.5pc}
\def\labelstyle{\displaystyle}
\xymatrix{
\bS &&& \bB & \Conc\bB\ar[rr]^{\eps}_{\cong} && \bS\\
\Conc\bA\ar[u]^{\varphi}
\save+<0ex,-5ex>\drop{\text{Starting from this\dots}}\restore
&&& \bA\ar[u]^f &\save+<0ex,-5ex>\drop{\text{\dots getting to that}}\restore&
\Conc\bA\ar[ul]^{\Conc f}\ar[ur]_{\varphi}&
}
 \]
\caption{One-dimensional amalgamation for $\Conc$}
\label{Fig:Con1}
\end{figure}

\begin{lemma}\label{L:ConcFunct}
The assignment $\bA\mapsto\Conc\bA$, $f\mapsto\Conc f$ defines a \emph{functor} from the class of all algebras with a given signature, with its homomorphisms, to the category of all \jzs s with \jzh s. This functor preserves all direct limits.
\end{lemma}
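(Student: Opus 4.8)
The statement has two parts: (1) that $\Conc$ is a functor, and (2) that it preserves direct limits. For part (1), the functoriality axioms to verify are that $\Conc(\id_{\bA})=\id_{\Conc\bA}$ and that $\Conc(g\circ f)=\Conc g\circ\Conc f$ whenever $f\colon\bA\to\bB$ and $g\colon\bB\to\bC$ are homomorphisms. Both are straightforward unravellings of the definition of $\Conc f$ given just above the statement. The first is immediate: the congruence of $\bA$ generated by $\setm{(x,y)}{(x,y)\in\alpha}$ is just $\alpha$. For the second, I would argue that for any $(x,y)\in\alpha$, the pair $(g(f(x)),g(f(y)))$ manifestly lies in $\Conc(g\circ f)(\alpha)$; hence the congruence $\Conc g(\Conc f(\alpha))$, being generated by all such pairs as $(x,y)$ ranges over $\alpha$, is contained in $\Conc(g\circ f)(\alpha)$. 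The reverse inclusion follows because $\Conc(g\circ f)(\alpha)$ is generated by the pairs $((g\circ f)(x),(g\circ f)(y))$, each of which lies in $\Conc g(\Conc f(\alpha))$ since $(f(x),f(y))\in\Conc f(\alpha)$. One should also note that $\Conc f$ does land in $\jz$-semilattices and is a $\jz$-homomorphism — this is already asserted in the text preceding the lemma, so I would simply invoke it, perhaps recalling why: $\Conc f$ sends $0$ to $0$ and is the restriction to compact elements of the complete join-homomorphism $\Con f\colon\Con\bA\to\Con\bB$ induced by taking preimages of congruences along $f$ (or directly: the congruence generated by a union is the join of the congruences generated by the pieces).

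**The direct-limit part.** This is the substantive claim. Let $\bA=\varinjlim_{i\in I}\bA_i$ be a direct limit of algebras, with transition homomorphisms $f_{ij}\colon\bA_i\to\bA_j$ (for $i\leq j$ in the directed poset $I$) and limit homomorphisms $f_i\colon\bA_i\to\bA$. I want to show that $\Conc\bA$, together with the maps $\Conc f_i\colon\Conc\bA_i\to\Conc\bA$, is the direct limit in the category of $\jz$-semilattices of the system $(\Conc\bA_i,\Conc f_{ij})$. The cleanest route is first to prove the analogous statement for $\Con$ itself as a functor into complete lattices with complete join-homomorphisms — that is, every congruence of $\bA$ is the join (= directed union, after closing the index set) of the congruences it induces on the pieces, i.e. $\theta=\bigvee_{i\in I}\Con f_i(\theta\cap\ker$-pullback$)$ — and then to restrict attention to compact elements. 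Concretely: a congruence $\theta$ of $\bA$ pulls back along each $f_i$ to a congruence $\theta_i:=(f_i\times f_i)^{-1}(\theta)$ of $\bA_i$, these are compatible with the $f_{ij}$, and the key lemma is that $\theta$ is the congruence of $\bA$ generated by $\bigcup_{i\in I}(f_i\times f_i)``\theta_i$. For compact $\theta$, finite generation plus directedness of $I$ forces $\theta$ to already be the image $\Conc f_i(\text{some compact }\theta_i')$ for a single $i$; and one checks that two elements of $\Conc\bA_i$ get identified in $\Conc\bA$ iff they already agree after some further transition $\Conc f_{ij}$. These two facts together are exactly the universal property of the direct limit in $\jz$-semilattices (where the underlying set of $\varinjlim$ is computed as the disjoint union of the $\Conc\bA_i$ modulo the evident equivalence).

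**Where the work is.** The routine functoriality is genuinely routine. The real content, and the step I expect to be the main obstacle to write carefully, is the assertion that every (compact) congruence of a direct limit $\bA$ is induced, via generation, from the pieces — i.e. that the canonical $\jz$-homomorphism $\varinjlim\Conc\bA_i\to\Conc\bA$ is surjective. The surjectivity rests on the standard description of direct limits of algebras: every element of $\bA$ is $f_i(a)$ for some $i$ and some $a\in\bA_i$, and any finite configuration of elements and any finitely many instances of "$(u,v)$ lies in the congruence generated by $F$" can be pushed down to a single stage $\bA_i$ by directedness. Injectivity of $\varinjlim\Conc\bA_i\to\Conc\bA$ is the easier half: if $\Conc f_i(\alpha)=\Conc f_i(\beta)$ then, writing each of $\alpha,\beta$ as finitely generated and chasing the generators of one inside the other through the colimit, one finds $j\geq i$ with $\Conc f_{ij}(\alpha)=\Conc f_{ij}(\beta)$. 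I would present the surjectivity argument in full and treat injectivity more briefly, then conclude by citing the standard fact that a natural transformation exhibiting an object as a cocone is a colimit as soon as the induced comparison map is bijective — here, an isomorphism of $\jz$-semilattices. (This lemma, and its proof, are of course classical; see e.g. Gr\"atzer~\cite{GLT2}.)
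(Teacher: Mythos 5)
The paper gives no proof of this lemma at all: it is quoted in this survey as a classical, folklore fact of universal algebra, so there is nothing of the paper's to compare your argument with. Your proposal is the standard proof and it is correct: the functoriality checks are routine, and preservation of direct limits is exactly the statement that the comparison map $\varinjlim\Conc\bA_i\to\Conc\bA$ is an isomorphism, with surjectivity obtained by pushing the finitely many generating pairs of a compact congruence (together with the finite Mal'cev-chain witnesses of membership in a generated congruence) down to a single stage by directedness, and the separation half obtained by the same device. Two small points of wording deserve correction, though neither is a genuine gap. First, your parenthetical description of $\Con f$ as ``induced by taking preimages of congruences along $f$'' has the direction backwards: preimage along $f$ is a map $\Con\bB\to\Con\bA$ (a complete meet-homomorphism), and $\Con f$ is its lower adjoint; your alternative direct justification (the congruence generated by a union is the join of the congruences generated by the pieces) is the right one. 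Second, in the composition identity, $\Conc g\bigl(\Conc f(\alpha)\bigr)$ is by definition generated by $(g\times g)``\bigl(\Conc f(\alpha)\bigr)$, not merely by the images of the generators of $\alpha$; what you need (and implicitly use again in the surjectivity step) is the standard observation that the congruence generated by the image of a generated congruence coincides with the congruence generated by the image of its generators, which holds because the preimage of a congruence under a homomorphism is a congruence. With those two sentences added, your outline is a complete and correct proof of the lemma.
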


Now suppose that the one-dimensional amalgamation property holds for the functor~$\Conc$ on the class~$\cC$. Let~$\bS$ be a countable~\jzs. Using a result from Bulman-Fleming and McDowell~\cite{BuMD78} (see also Goodearl and Wehrung~\cite{GoWe01} for a self-contained proof), there is a representation of the form
 \begin{equation}\label{Eq:DirSystS}
 \bS=\varinjlim_{n<\omega}\bS_n\,,
 \end{equation}
where all the~$\bS_n$ are \emph{finite Boolean} \jzs s and all the transition maps of the direct system in~\eqref{Eq:DirSystS} are \jzh s. Suppose that $\bS_n\cong\Conc\bA_n$, with an isomorphism $\eps_n\colon\Conc\bA_n\to\bS_n$, and denote by $\sigma_n\colon\bS_n\to\bS_{n+1}$ the transition map associated with the direct system in~\eqref{Eq:DirSystS}. We get a \jzh\ $\sigma_n\circ\eps_n\colon\Conc\bA_n\to\bS_{n+1}$, thus, by the one-dimensional amalgamation assumption, there are $\bA_{n+1}\in\cC$, a homomorphism $f_n\colon\bA_n\to\bA_{n+1}$, and an isomorphism $\eps_{n+1}\colon\Conc\bA_{n+1}\to\bS_{n+1}$ such that $\eps_{n+1}\circ\Conc f_n=\sigma_n\circ\eps_n$. Defining~$\bA$ as the direct limit of the system
 \[
 \xymatrix{
 \bA_0\ar[r]^{f_0} & \bA_1\ar[r]^{f_1} & \bA_2\ar[r]^{f_2} &\cdots\cdots\,,
 }
 \]
we obtain, as the functor~$\Conc$ preserves direct limits, that $\Conc\bA\cong\bS$. So we have obtained the following result.

\begin{proposition}\label{P:Con1toCtble}
Let~$\cC$ be a class of algebras of the same signature. If the one-dimensional amalgamation property holds for~$\Conc$ on~$\cC$, then every countable distributive \jzs\ is isomorphic to~$\Conc\bA$ for the direct limit~$\bA$ of some countable sequence in~$\cC$.
\end{proposition}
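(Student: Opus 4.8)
The plan is to combine three ingredients already available: the decomposition theorem of Bulman-Fleming and McDowell~\cite{BuMD78} (which writes any countable distributive \jzs\ as a countable direct limit of finite Boolean \jzs s along \jzh s), the one-dimensional amalgamation hypothesis, and the fact, recorded in Lemma~\ref{L:ConcFunct}, that~$\Conc$ is a functor that preserves direct limits. The argument is then an inductive construction followed by a passage to the limit.

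First I would fix a countable distributive \jzs~$\bS$ and, invoking~\cite{BuMD78}, write it as $\bS=\varinjlim_{n<\omega}\bS_n$ as in~\eqref{Eq:DirSystS}, with every~$\bS_n$ finite Boolean and every transition map $\sigma_n\colon\bS_n\to\bS_{n+1}$ a \jzh. Next I would build, by induction on~$n$, members $\bA_n\in\cC$, homomorphisms $f_n\colon\bA_n\to\bA_{n+1}$, and isomorphisms $\eps_n\colon\Conc\bA_n\to\bS_n$ such that $\eps_{n+1}\circ\Conc f_n=\sigma_n\circ\eps_n$ for every~$n$. For the base step, pick any $\bA_0\in\cC$ (the class~$\cC$ being nonempty) and note that the constant map with value~$0$ is a \jzh\ from~$\Conc\bA_0$ to~$\bS_0$; applying the one-dimensional amalgamation property to it produces a member of~$\cC$, which we rename~$\bA_0$, together with an isomorphism $\eps_0\colon\Conc\bA_0\to\bS_0$. (Alternatively, one may simply use the assumption that every finite Boolean lattice lies in~$\Conc\cC$.) For the inductive step, given~$\bA_n$ and~$\eps_n$, the composite $\sigma_n\circ\eps_n\colon\Conc\bA_n\to\bS_{n+1}$ is a \jzh\ into a finite Boolean semilattice, so the one-dimensional amalgamation property directly yields $\bA_{n+1}\in\cC$, a homomorphism $f_n\colon\bA_n\to\bA_{n+1}$, and an isomorphism $\eps_{n+1}\colon\Conc\bA_{n+1}\to\bS_{n+1}$ with $\eps_{n+1}\circ\Conc f_n=\sigma_n\circ\eps_n$.

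Finally I would set~$\bA:=\varinjlim_{n<\omega}\bA_n$, the direct limit of the sequence $(\bA_n,f_n)_{n<\omega}$ in~$\cC$, which is a countable sequence as required. Since~$\Conc$ preserves direct limits, $\Conc\bA\cong\varinjlim_{n<\omega}\Conc\bA_n$ along the maps~$\Conc f_n$. The commuting squares $\eps_{n+1}\circ\Conc f_n=\sigma_n\circ\eps_n$ assert exactly that $(\eps_n)_{n<\omega}$ is an isomorphism of direct systems from $(\Conc\bA_n,\Conc f_n)_{n<\omega}$ onto $(\bS_n,\sigma_n)_{n<\omega}$, so passing to direct limits gives $\Conc\bA\cong\varinjlim_{n<\omega}\bS_n=\bS$.

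I do not expect a genuine obstacle here: the content has already been distilled into the three cited facts, and what remains is bookkeeping. The one point deserving care is the very last implication, that an isomorphism between two direct systems induces an isomorphism between their direct limits; this is a standard and routine property of directed colimits (each of the two limit cocones satisfies the universal property characterizing the other), and it is the hinge on which the construction closes.
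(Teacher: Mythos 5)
Your proof is correct and follows essentially the same route as the paper: the Bulman-Fleming--McDowell decomposition $\bS=\varinjlim_{n<\omega}\bS_n$, an inductive application of one-dimensional amalgamation to the maps $\sigma_n\circ\eps_n$, and preservation of direct limits by~$\Conc$ to conclude. Your explicit treatment of the base step (via the zero map, or via the standing assumption of the section that finite Boolean semilattices are representable in~$\cC$) is a reasonable touch that the paper leaves implicit.
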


This approach is used in R\r{u}\v{z}i\v{c}ka, T\r{u}ma, and Wehrung~\cite{RTW} to establish the following statement.

\begin{corollary}\label{C:Con1lgrps}
Every distributive algebraic lattice with countably many compact elements is isomorphic to the congruence lattice of some lattice-ordered group.
\end{corollary}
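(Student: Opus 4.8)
The plan is to deduce this from Proposition~\ref{P:Con1toCtble}, applied to the class~$\cC$ of all lattice-ordered groups ($\ell$-groups). First I would handle the routine passage from lattices to semilattices: given a distributive algebraic lattice~$L$ with countably many compact elements, let~$\bS$ be the \jzs\ of all compact elements of~$L$. Then~$\bS$ is countable, and it is distributive because $\Id\bS\cong L$ is a distributive lattice (cf.\ Section~\ref{Su:Posets}); also $L\cong\Id\bS$. As the $\ell$-groups form a variety, this class is closed under directed colimits, so a direct limit of a countable sequence of $\ell$-groups is again an $\ell$-group. Hence, once the one-dimensional amalgamation property for~$\Conc$ has been established on~$\cC$, Proposition~\ref{P:Con1toCtble} yields an $\ell$-group~$G$ with $\bS\cong\Conc G$, and then $L\cong\Id\bS\cong\Id(\Conc G)\cong\Con G$.

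It remains to verify the one-dimensional amalgamation property for~$\Conc$ on the class of all $\ell$-groups. Let~$\bA$ be an $\ell$-group, let~$\bS$ be a finite Boolean semilattice with atoms $p_0,\dots,p_{n-1}$, and let $\varphi\colon\Conc\bA\to\bS$ be a \jzh. I would first split~$\varphi$ into its coordinates: for each $i<n$, the map $\psi_i\colon\Conc\bA\to\two$ defined by setting $\psi_i(x)=1$ exactly when $p_i\leq\varphi(x)$ is a \jzh, and its kernel~$N_i$ is an ideal of the \jzs~$\Conc\bA$. Now recall that the congruences of an $\ell$-group are in natural bijection with its $\ell$-ideals (the convex normal $\ell$-subgroups), so that~$\Conc\bA$ may be identified with the \jzs\ of principal $\ell$-ideals of~$\bA$. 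Under this identification, $N_i$ corresponds to an $\ell$-ideal~$I_i$ of~$\bA$, namely the union of the principal $\ell$-ideals belonging to~$N_i$.

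Next I would realize each~$\psi_i$ as $\Conc f_i$ for a single homomorphism~$f_i$ into an $\ell$-simple $\ell$-group. If $I_i=\bA$, i.e., $\psi_i$ is the zero map, put $\bB_i:=\ZZ$ and let $f_i\colon\bA\to\bB_i$ be the zero homomorphism. Otherwise, $\bA/I_i$ is a nontrivial $\ell$-group; using the classical fact that every $\ell$-group embeds into an $\ell$-simple $\ell$-group, fix an embedding of $\bA/I_i$ into an $\ell$-simple $\ell$-group~$\bB_i$, and let $f_i$ be the composite $\bA\onto\bA/I_i\into\bB_i$. In both cases $\Conc\bB_i=\two$ and $\Conc f_i=\psi_i$, the latter because $\ker f_i=I_i$ while every nonzero congruence of the $\ell$-simple $\bB_i$ is the top one. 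Finally set $\bB:=\prod_{i<n}\bB_i$ and $f:=(f_i)_{i<n}\colon\bA\to\bB$. Since the $\ell$-groups form a congruence-permutable variety, the functor~$\Conc$ preserves finite products (Fleischer's lemma, plus the coordinatewise description of the compact elements of a finite product of algebraic lattices); hence $\Conc\bB\cong\prod_{i<n}\Conc\bB_i=\two^n$, and composing~$f$ with the product projections gives $\Conc f=(\Conc f_i)_{i<n}=(\psi_i)_{i<n}$. Taking for $\eps\colon\Conc\bB\to\bS$ the isomorphism that sends the $i$-th coordinate atom of~$\two^n$ onto~$p_i$, we obtain $\eps\circ\Conc f=\varphi$, as required; together with the reduction of the first paragraph and Proposition~\ref{P:Con1toCtble}, this establishes the corollary.

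The only step that is more than bookkeeping, and hence the one I would expect to be the main obstacle, is the embedding of an arbitrary $\ell$-group into an $\ell$-simple $\ell$-group; this can be obtained, for instance, from Holland's representation of $\ell$-groups as groups of order-preserving permutations of chains. The rest is formal juggling with the functor~$\Conc$, with the congruence/$\ell$-ideal correspondence, and with the good behaviour of~$\Conc$ on finite products over a congruence-permutable variety. This is, in substance, the route followed by R\r{u}\v{z}i\v{c}ka, T\r{u}ma, and Wehrung~\cite{RTW}, where the $\ell$-group case is extracted from a general representation theorem for congruence-permutable varieties possessing enough simple algebras.
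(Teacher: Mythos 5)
Your proposal is correct and is essentially the paper's own route: the paper obtains the corollary by applying Proposition~\ref{P:Con1toCtble} to the class of $\ell$-groups, the one-dimensional amalgamation property for $\Conc$ being verified in R\r{u}\v{z}i\v{c}ka--T\r{u}ma--Wehrung \cite{RTW} just as you sketch (congruences as $\ell$-ideals, coordinatewise splitting at the atoms of the Boolean semilattice, embedding each quotient into an $\ell$-simple $\ell$-group, then taking a finite product). Two small repairs to your justifications: preservation of finite products by $\Conc$ does not follow from congruence-permutability alone (groups are a counterexample) --- use instead that $\ell$-ideals of a finite product of $\ell$-groups are products of $\ell$-ideals (equivalently, that the variety of $\ell$-groups is both congruence-permutable and congruence-distributive); and the embedding of an arbitrary $\ell$-group into an $\ell$-simple one needs a bit more than Holland's representation as stated (for instance, one lands in the bounded-support automorphisms of a doubly homogeneous chain), though it is indeed the classical external ingredient used in \cite{RTW}.
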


The corresponding statement with ``countable'' replaced by ``with at most~$\aleph_1$ elements'' is still open. Yet there are also known positive results in cardinality~$\aleph_1$, for which we shall set the framework in the next section. It will turn out that none of the corresponding representation results holds at cardinality~$\aleph_2$.

\section{One step further: representing semilattices of cardinality~$\aleph_1$}
\label{S:Al1}

In the context of Section~\ref{S:ExpleUA}, one may try to represent larger distributive \jzs s than just the countable ones. Thus the first step consists of trying semilattices of cardinality~$\aleph_1$. The na\"\i ve approach, consisting of keeping the one-dimensional amalgamation property and replacing the chain~$\omega$ of all natural numbers by the larger chain~$\omega_1$ of all countable ordinals, obviously fails: if $\card S=\aleph_1$, then there is no way to represent $\bS=\varinjlim_{\xi<\omega_1}\bS_\xi$ with all~$S_\xi$ finite.

The relevant strengthening of the assumption on~$\cC$ is the following.

\begin{all}{Two-dimensional amalgamation property for $\Conc$}
\textup{Let $\bA_0$, $\bA_1$, $\bA_2$ be members of~$\cC$ with homomorphisms $f_i\colon\bA_0\to\bA_i$ for $i\in\set{1,2}$, and let~$\bS$ be a finite Boolean \jzs\ with \jzh s $\psi_i\colon\Conc\bA_i\to\bS$, for $i\in\set{1,2}$, such that $\psi_1\circ\Conc f_1=\psi_2\circ\Conc f_2$. Then there are a member~$\bA$ of~$\cC$, homomorphisms $g_i\colon\bA_i\to\bA$ for $i\in\set{1,2}$, and an isomorphism $\eps\colon\Conc\bA\to\bS$ such that $g_1\circ f_1=g_2\circ f_2$ while $\psi_i=\eps\circ\Conc g_i$ for each $i\in\set{1,2}$.}
\end{all}

The two-dimensional amalgamation property can be illustrated by Figure~\ref{Fig:Con2}. On that picture, the plain vanilla arrows are those that are assumed to exist from the start, while the dotted ones are those whose existence follows from the property.

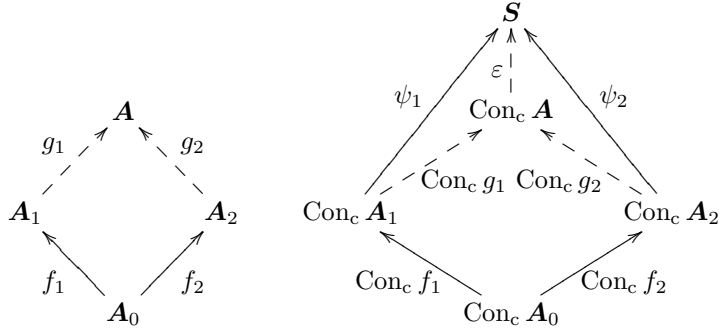
\begin{figure}[htb]
 \[
\xymatrixrowsep{2pc}\xymatrixcolsep{1.5pc}
\def\labelstyle{\displaystyle}
\xymatrix{
& & & & \bS &\\
& \bA & & & \Conc\bA\ar@{-->}[u]^(.4){\eps} \\
\bA_1\ar@{-->}[ru]^{g_1} & & \bA_2\ar@{-->}[lu]_{g_2}
& \Conc\bA_1\ar@{-->}[ur]_{\Conc g_1}\ar[uur]^{\psi_1} & &
\Conc\bA_2\ar@{-->}[ul]^{\Conc g_2}\ar[uul]_{\psi_2}\\
& \bA_0\ar[ul]^{f_1}\ar[ur]_{f_2} & & &
\Conc\bA_0\ar[ul]^{\Conc f_1}\ar[ur]_{\Conc f_2}
}
 \]
\caption{Two-dimensional amalgamation for $\Conc$}
\label{Fig:Con2}
\end{figure}

Assuming two-dimensional congruence amalgamation, we still need to replace the chain~$\omega$ by a poset of cardinality~$\aleph_1$ that makes it possible to use two-dimensional congruence amalgamation. We remind the reader that lower finite posets are defined in Section~\ref{Su:Posets}.

\begin{definition}\label{D:kladd}
Let~$k$ be a positive integer. A lower finite lattice~$\bL$ is a \emph{$k$-ladder} if every element of~$L$ has at most~$k$ lower covers in~$\bL$.
\end{definition}

Note that every $k$-ladder has breadth at most $k$. The $1$-ladders are exactly the finite chains and the chain $\omega$ of all natural numbers.
Note that $k$-ladders are called \emph{$k$-frames} in H.~Dobbertin 
\cite{Dobb86}. Due to a terminology conflict with another lattice-theoretical concept, the \emph{von~Neumann frames}, we chose in Gr\"atzer, Lakser, and Wehrung~\cite{GLW} to call these objects \emph{ladders}. The concept originates in S.\,Z. Ditor~\cite{Dito84}, where $k$-ladders are simply called \emph{$k$-lattices}.

Ditor proves in~\cite{Dito84} that every $k$-ladder has at most~$\aleph_{k-1}$ elements. (This is also a straightforward consequence of Kuratowski's Free Set Theorem, see Kuratowski~\cite{Kura51}.) More relevant to our present congruence representation problem, Ditor proves the following.

\begin{proposition}\label{P:2laddAl1}
There exists a $2$-ladder of cardinality~$\aleph_1$.
\end{proposition}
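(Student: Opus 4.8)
The plan is to construct a $2$-ladder of cardinality $\aleph_1$ explicitly as a lattice of finite subsets of $\omega_1$, ordered so that each element acquires at most two lower covers. The natural candidate is to work with a suitable sublattice of $[\omega_1]^{<\omega}$. First I would fix, for each countable ordinal $\alpha$, an enumeration $e_\alpha\colon\omega\to\alpha\cup\{\alpha\}$ with $e_\alpha(0)=0$ (or a cofinal $\omega$-sequence when $\cf\alpha=\omega$), so that every ordinal below $\omega_1$ is approached in an orderly, "one-dimensional" way. The idea is that a finite subset $F\subseteq\omega_1$ should be declared to belong to $L$ precisely when it is ``closed'' under the bookkeeping maps $e_\alpha$ in a controlled fashion: adding the largest element $\alpha=\max F$ forces $F$ to contain an initial segment $\{e_\alpha(0),\dots,e_\alpha(n)\}$ of the chosen sequence for $\alpha$. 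One then checks that $L$, ordered by inclusion, is a lattice (the key point being closure under finite intersections and under a suitably defined join), that $L$ is lower finite since $L\dnw F\subseteq\Pow(F)$ is finite, and that $|L|=\aleph_1$ since each $\alpha<\omega_1$ contributes only countably many finite ``closed'' sets containing it.

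The heart of the argument is the bound on lower covers. Given $F\in L$, I would argue that the lower covers of $F$ are obtained by deleting a single ``removable'' element, and that the closure conditions leave at most two choices of removable element: roughly, one may delete $\max F$ (after which one may need to also shrink the $e_\alpha$-initial segment it forced, but this happens along a chain and so contributes one cover), and one may delete the current last element of the relevant $e_\alpha$-segment; every other element of $F$ is ``pinned'' by being required by some larger element still present. Making the list of removable elements provably have size $\leq 2$ is the delicate combinatorial bookkeeping, and it is exactly where the choice of the enumerations $e_\alpha$ must be made carefully (for instance, ensuring the sequences are increasing, or nested in a tree-like manner, so that the constraints do not proliferate).

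The main obstacle I anticipate is precisely this coherence of the enumerations: a careless choice of the $e_\alpha$ will make an element of $F$ be required by its successor in two incomparable ways, producing a third lower cover and breaking the $2$-ladder property. One natural fix is to organize the ordinals $<\omega_1$ along a tree (e.g.\ using that $\omega_1$ carries no uncountable branch but is a union of countable levels is \emph{not} quite what is needed — rather, one uses that each countable ordinal has a canonical ``path'' to $0$), so that the closure of a finite set under ``predecessor in the path'' together with ``$e$-predecessor'' yields exactly a two-generated growth at each step. This is essentially Ditor's construction, and once the enumerations are fixed coherently the verification that $L$ is a lattice and that covers come in pairs is a finite, if somewhat intricate, case analysis on $\max F$ and on the structure of the forced initial segments. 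An alternative, cleaner route — which I would mention as a fallback — is to note that the existence of a $2$-ladder of size $\aleph_1$ is equivalent to a free-set statement about $\omega_1$ and to derive it from Kuratowski's Free Set Theorem (as alluded to in the excerpt for the cardinality bound), but the explicit construction is more transparent and is what I would present in full.
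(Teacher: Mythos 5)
There is a genuine gap: the defining property of a $2$-ladder --- that \emph{every} element has at most two lower covers --- is exactly the point your sketch does not establish. You describe a family $L$ of ``closed'' finite subsets of $\omega_1$ and assert that, with a careful choice of the enumerations $e_\alpha$, the removable elements of an $F\in L$ can be limited to two; but you then acknowledge that this is ``the delicate combinatorial bookkeeping'' and only gesture at a fix (``organize the ordinals along a tree'', ``nested in a tree-like manner'') without specifying the coherence condition on $\famm{e_\alpha}{\alpha<\omega_1}$ or verifying it. Since that coherence is the entire mathematical content of the proposition, the proposal is a plan rather than a proof. The same vagueness affects the lattice structure itself: you never pin down how long the forced initial segment $\{e_\alpha(0),\dots,e_\alpha(n)\}$ is, so it is not clear that your ``closure'' is a closure operator, that $L$ is closed under finite joins (closure of a union) with the join again finite, or that the meet of two members of $L$ is their intersection. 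The proposition is due to Ditor, and the argument the paper points to runs quite differently: one builds the ladder by transfinite recursion of length $\omega_1$ as the union of a continuous increasing chain of \emph{countable} $2$-ladders, each a lower subset (ideal) of the next, using at each step that a countable directed poset has a cofinal increasing $\omega$-chain $\famm{a_n}{n<\omega}$ over which one adjoins a new chain $b_0<b_1<\cdots$ with each $b_n$ covering only $b_{n-1}$ and $a_n$; lower finiteness, the lattice property, and the two-cover bound are preserved at every stage, and after $\omega_1$ steps the union has cardinality $\aleph_1$. That route avoids precisely the global coherence problem your construction runs into.

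Your fallback is also not available: Kuratowski's Free Set Theorem gives only the \emph{upper} bound (a $k$-ladder has at most $\aleph_{k-1}$ elements); it does not yield existence, and the existence of a $2$-ladder of cardinality $\aleph_1$ is not an instance of, nor easily equivalent to, a free-set statement in the direction you suggest. Indeed, the analogous existence question one dimension up --- a $3$-ladder of cardinality $\aleph_2$, Ditor's Problem --- is open in $\mathsf{ZFC}$ even though the Kuratowski bound holds there as well, so no such soft derivation can exist in general. If you want to salvage the explicit approach, you would need to exhibit a genuinely coherent system of enumerations (or an equivalent tree-like organization of $\omega_1$) and then carry out the cover analysis in full; as written, the proof is incomplete at its central step.
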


Now suppose that two-dimensional congruence amalgamation holds. We prove that every distributive \jzs~$\bS$ with at most~$\aleph_1$ elements is isomorphic to~$\Conc\bA$, for some direct limit~$\bA$ of members of~$\cC$. Write $\bS=\varinjlim\vec\bS$, for a directed poset~$P$ with at most~$\aleph_1$ elements and a direct system $\vec\bS=\famm{\bS_p,\sigma_p^q}{p\leq q\text{ in }P}$ of finite Boolean \jzs s and \jzh s. For example, $P$ can be assumed to be the set of all finite subsets of~$S\times\omega$, partially ordered under set inclusion. Let~$F$ be a $2$-ladder of cardinality~$\aleph_1$. As~$F$ is lower finite, it is easy to construct, by induction, an isotone cofinal map $\nu\colon F\to P$; this reduces the situation to the case where $P=F$. Then, using two-dimensional amalgamation and the definition of a $2$-ladder, one constructs, by induction within the ladder, an $F$-indexed direct system~$\vec\bA$ in~$\cC$ such that $\Conc\vec\bA\cong\vec\bS$. Now take~$\bA:=\varinjlim\vec\bA$. As the functor~$\Conc$ preserves direct limits, $\Conc\bA\cong\bS$. We have thus outlined the proof of the following.

\begin{proposition}\label{P:Con2toAl1}
Let~$\cC$ be a class of algebras of the same signature. If the two-dimensional amalgamation property holds for~$\Conc$ on~$\cC$, then every distributive \jzs\ with at most~$\aleph_1$ elements is isomorphic to~$\Conc\bA$ for the direct limit~$\bA$ of a diagram, indexed by a $2$-ladder of cardinality~$\aleph_1$, of members of~$\cC$.
\end{proposition}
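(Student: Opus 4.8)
The plan is to spell out the route sketched after the statement. One realizes~$\bS$ as a directed colimit of finite Boolean \jzs s, transports the index poset of that colimit onto a $2$-ladder~$F$ of cardinality~$\aleph_1$, lifts the resulting $F$-indexed diagram through the functor~$\Conc$ by recursion along~$F$ (this is where the two-dimensional amalgamation property gets consumed), and finally passes to a colimit. To begin, as recalled before the statement and following Bulman-Fleming and McDowell~\cite{BuMD78}, write $\bS=\varinjlim\vec\bS$ for a directed poset~$P$ with $\card P\leq\aleph_1$ and a direct system $\vec\bS=\famm{\bS_p,\sigma_p^q}{p\leq q\text{ in }P}$ of finite Boolean \jzs s and \jzh s; concretely one may take~$P$ to be the set of all finite subsets of~$S\times\omega$, ordered by inclusion.

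Next, fix by Proposition~\ref{P:2laddAl1} a $2$-ladder~$F$ with $\card F=\aleph_1$. Since~$F$ is lower finite it admits a linear extension, which is then a well-order; along that well-order, and using $\card P\leq\aleph_1=\card F$, a routine transfinite recursion produces an isotone cofinal map $\nu\colon F\to P$ (when an element~$a$ of~$F$ is reached, its finitely many strict predecessors have already been assigned values, and one chooses $\nu(a)\in P$ above all of them, interleaving an enumeration of~$P$ to secure cofinality). Replacing $\bS_p$ by $\bS_{\nu(a)}$ and the~$\sigma$'s by their pullbacks along~$\nu$ yields an $F$-indexed direct system $\vec\bS=\famm{\bS_a,\sigma_a^b}{a\leq b\text{ in }F}$ of finite Boolean \jzs s with $\varinjlim\vec\bS\cong\bS$, because a cofinal reindexing does not change a directed colimit. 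So one may assume from now on that the index poset is~$F$ itself.

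The core step lifts~$\vec\bS$ through~$\Conc$: by recursion on~$F$ (well-founded since~$F$ is lower finite) one constructs members $\bA_a\in\cC$, homomorphisms $f_a^b\colon\bA_a\to\bA_b$ for $a\leq b$ forming a direct system, and isomorphisms $\eps_a\colon\Conc\bA_a\to\bS_a$ with $\eps_b\circ\Conc f_a^b=\sigma_a^b\circ\eps_a$ for all $a\leq b$. For $a=0_F$, the one-dimensional amalgamation property (the case $\bA_1=\bA_2$ of the two-dimensional one), applied to the zero \jzh\ $\Conc\bA\to\bS_{0_F}$ for an arbitrary $\bA\in\cC$, yields $\bA_{0_F}\in\cC$ with $\Conc\bA_{0_F}\cong\bS_{0_F}$. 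For $a\neq 0_F$, let $c_1,c_2$ be the lower covers of~$a$ in~$F$ --- at most two, by the definition of a $2$-ladder, with $c_1=c_2$ allowed when there is only one --- put $d:=c_1\wedge c_2$, and consider the span $\bA_{c_1}\leftarrow\bA_d\rightarrow\bA_{c_2}$ already built. The inductive hypothesis makes the maps $\psi_i:=\sigma_{c_i}^a\circ\eps_{c_i}\colon\Conc\bA_{c_i}\to\bS_a$ equalize this span, so the two-dimensional amalgamation property delivers $\bA_a\in\cC$, homomorphisms $f_{c_i}^a\colon\bA_{c_i}\to\bA_a$ with $f_{c_1}^a\circ f_d^{c_1}=f_{c_2}^a\circ f_d^{c_2}$, and an isomorphism $\eps_a\colon\Conc\bA_a\to\bS_a$ with $\psi_i=\eps_a\circ\Conc f_{c_i}^a$. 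Since~$F$ is lower finite the interval $[x,a]$ is finite for every $x<a$, so~$x$ lies below a lower cover of~$a$; one then completes the step by setting $f_x^a:=f_{c_i}^a\circ f_x^{c_i}$ whenever $x\leq c_i$, together with $f_a^a:=\id$.

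The step I expect to be the main obstacle is the coherence of these freshly defined transition maps: one must verify that $f_x^a$ does not depend on the chosen lower cover above~$x$, and that the enlarged family is again a direct system compatible with the~$\eps$'s. Independence reduces to the identity $f_{c_1}^a\circ f_x^{c_1}=f_{c_2}^a\circ f_x^{c_2}$ in the case $x\leq c_1$ and $x\leq c_2$, that is, $x\leq d$; it follows by rewriting each $f_x^{c_i}$ as $f_d^{c_i}\circ f_x^d$ and invoking the amalgamation equation $f_{c_1}^a\circ f_d^{c_1}=f_{c_2}^a\circ f_d^{c_2}$. This is exactly where the hypotheses combine: that~$F$ is a lattice yields $F\dnw c_1\cap F\dnw c_2=F\dnw(c_1\wedge c_2)$, so the overlap of the two down-sets is controlled by the single object~$\bA_d$; lower finiteness guarantees that every $x<a$ sits below a lower cover of~$a$; and the ceiling of two lower covers is precisely what makes the two-dimensional, rather than some higher-dimensional, amalgamation property sufficient. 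Once $\vec\bA=\famm{\bA_a,f_a^b}{a\leq b\text{ in }F}$ is built, $(\eps_a)_{a\in F}$ is an isomorphism of $F$-indexed diagrams from $\Conc\vec\bA$ onto~$\vec\bS$; setting $\bA:=\varinjlim\vec\bA$ and using Lemma~\ref{L:ConcFunct}, which asserts that~$\Conc$ preserves direct limits, one gets $\Conc\bA\cong\varinjlim\Conc\vec\bA\cong\varinjlim\vec\bS\cong\bS$, and~$\bA$ is the direct limit of a diagram indexed by the $2$-ladder~$F$ of cardinality~$\aleph_1$, as required.
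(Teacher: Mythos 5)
Your proposal is correct and follows exactly the route the paper itself takes (the paper only sketches it): realize $\bS$ as a directed colimit of finite Boolean \jzs s, reindex cofinally along a $2$-ladder $F$ of cardinality $\aleph_1$, lift the $F$-indexed diagram through $\Conc$ by recursion using two-dimensional amalgamation at each pair of lower covers, and conclude via preservation of direct limits. Your added details — the meet $d=c_1\wedge c_2$ controlling the overlap, the well-definedness and coherence of the extended transition maps, and the compatibility of the $\eps_a$'s — are precisely the routine verifications the paper leaves implicit, and they check out.
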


This argument has turned out to be very useful in proving various representation results, in size~$\aleph_1$, with respect to the~$\Conc$ functor, starting with A.\,P. Huhn~\cite{Huhn89a, Huhn89b}; see, for example, Gr\"atzer, Lakser, and Wehrung~\cite{GLW}, or R\r{u}\v{z}i\v{c}ka, T\r{u}ma, and Wehrung~\cite{RTW}. See also Wehrung~\cite{WReg} for a representation result with respect to another functor than~$\Conc$, namely the compact ideal semilattice functor on a regular ring.

\section{Looking ahead: $3$-dimensional amalgamation and $3$-ladders}
\label{S:Con3}

For higher cardinalities than~$\aleph_1$, the situation gets somehow romantic. There are, to this date, no known congruence representation results specifically stated in cardinality~$\aleph_2$. Yet there are theorems, now of set-theoretical nature, which are an order of magnitude harder than those about representation in cardinality below~$\aleph_1$. These results go together with an array of still unsolved problems.

We first deal with the easy problem of $3$-dimensional congruence amalgamation, for a class~$\cC$ of algebras with the same signature. Set $B_3:=\Pow(3)$ (recall that $3=\set{0,1,2}$), partially ordered under set inclusion, and set $B_3^=:=B_3\setminus\set{3}$. The statement of three-dimensional amalgamation is quite discouraging at first sight:

\begin{all}{Three-dimensional amalgamation property for $\Conc$}
\textup{Let~$\bS$ be a finite Boolean semilattice, let $\vec\bA=\famm{\bA_p,\alpha_p^q}{p\leq q\text{ in }B_3^=}$ be a $B_3^=$-indexed direct system in~$\cC$, and let $\famm{\sigma_p}{p\in B_3^=}$ be a natural transformation from $\Conc\vec\bA$ to the one-object diagram~$\bS$. Then there are $\bA\in\cC$ and a natural transformation $\famm{\alpha_p}{p\in B_3^=}$ from~$\vec\bA$ to the one-object diagram~$\bA$, together with an isomorphism $\eps\colon\Conc\bA\to\bS$, such that $\sigma_p=\eps\circ\Conc\alpha_p$ for each $p\in B_3^=$.}
\end{all}

This can be explained as follows. We are given a truncated cube (here, a $B_3^=$-indexed diagram)~$\vec\bA$ of members of~$\cC$, and we form the image $\Conc\vec\bA$ of this diagram under the~$\Conc$ functor. At the top of this diagram, we put the finite Boolean semilattice~$\bS$, thus getting a cube~$\vec\bS$ of \jzs s. Three-dimensional amalgamation states that there is a way to adjoin a top vertex to~$\vec\bA$, thus getting a cube~$\vec\bB$, such that $\Conc\vec\bB\cong\vec\bS$, in a way that preserves the already existing relations. This is illustrated in Figure~\ref{Fig:Con3}, with $\ol{\alpha}_p:=\Conc\alpha_p$ for each $p\in\set{\set{0,1},\set{0,2},\set{1,2}}$. The unlabeled arrows should be obvious: for example, the arrow from~$\bA_{\set{0}}$ to~$\bA_{\set{0,1}}$ is~$\alpha_{\set{0}}^{\set{0,1}}$; and so on.

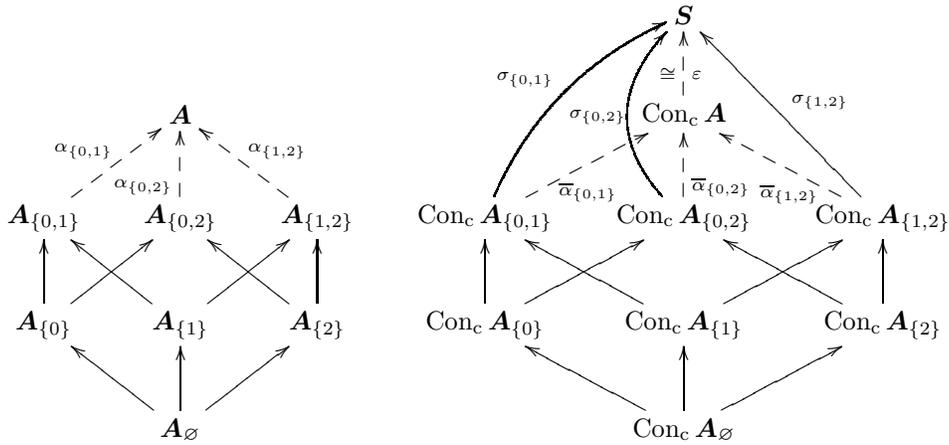
\begin{figure}[htb]
 \[
\xymatrixrowsep{2pc}\xymatrixcolsep{1.5pc}
\xymatrix{
&&&&\bS & \\
& \bA &&& \Conc\bA\ar@{-->}[u]_(.4){\eps}^(.4){\cong} &\\
\bA_{\set{0,1}}\ar@{-->}[ur]^{\alpha_{\set{0,1}}} 
& \bA_{\set{0,2}}\ar@{-->}[u]^(.3){\alpha_{\set{0,2}}} 
& \bA_{\set{1,2}}\ar@{-->}[ul]_{\alpha_{\set{1,2}}} &
\Conc\bA_{\set{0,1}}\ar@{-->}[ur]_(.42){\ol{\alpha}_{\set{0,1}}}
\ar@/^1pc/[uur]^{\sigma_{\set{0,1}}}
& \Conc\bA_{\set{0,2}}\ar@{-->}[u]_(.3){\ol{\alpha}_{\set{0,2}}}
\ar@/^1.8pc/[uu]^(.5){\sigma_{\set{0,2}}\!\!}
& \Conc\bA_{\set{1,2}}\ar@{-->}[ul]^(.4){\ol{\alpha}_{\set{1,2}}\!\!}
\ar[uul]_{\sigma_{\set{1,2}}}\\
\bA_{\set{0}}\ar[u]\ar[ur] & \bA_{\set{1}}\ar[ul]\ar[ur] & \bA_{\set{2}}\ar[ul]\ar[u] &
\Conc\bA_{\set{0}}\ar[u]\ar[ur] & \Conc\bA_{\set{1}}\ar[ul]\ar[ur]
& \Conc\bA_{\set{2}}\ar[ul]\ar[u]\\
& \bA_{\es}\ar[ul]\ar[u]\ar[ur] &&& \Conc\bA_{\es}\ar[ul]\ar[u]\ar[ur] &
}
 \]
\caption{Three-dimensional amalgamation for $\Conc$}
\label{Fig:Con3}
\end{figure}

Unfortunately, the following result shows that three-dimensional amalgamation occurs only in exceptional cases.

\begin{proposition}\label{P:3Amalg}
Suppose that a class~$\cC$ of algebras of the same signature satisfies three-dimensional amalgamation. Let~$\bA_0$, $\bA_1$, $\bA_2$ be members of~$\cC$ with one-to-one homomorphisms $f,g\colon\bA_1\into\bA_2$. We suppose that~$\bA_0$ is a substructure of~$\bA_1$. If $f\res_{\bA_0}=g\res_{\bA_0}$, then $f=g$.
\end{proposition}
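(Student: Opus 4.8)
The plan is to apply the three-dimensional amalgamation property to a carefully designed $B_3^=$-indexed direct system built out of $\bA_0$, $\bA_1$, $\bA_2$, $f$, $g$, and then to read $f=g$ off the cocone that it returns. Let $j\colon\bA_0\into\bA_1$ denote the inclusion homomorphism. I build $\vec\bA=\famm{\bA_p,\alpha_p^q}{p\leq q\text{ in }B_3^=}$ by setting $\bA_\es:=\bA_0$, $\bA_{\set{0}}:=\bA_1$, and $\bA_p:=\bA_2$ for every other $p\in B_3^=$, and by declaring, on the covers of $B_3^=$, that $\alpha_\es^{\set{0}}:=j$, that $\alpha_\es^{\set{1}}:=\alpha_\es^{\set{2}}:=f\circ j$ (the equality $f\circ j=g\circ j$ coming from the hypothesis), that $\alpha_{\set{0}}^{\set{0,1}}:=f$ and $\alpha_{\set{0}}^{\set{0,2}}:=g$, and that the remaining four covers $\set{1}\leq\set{0,1}$, $\set{2}\leq\set{0,2}$, $\set{1}\leq\set{1,2}$, $\set{2}\leq\set{1,2}$ are all sent to $\id_{\bA_2}$. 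The first point to verify is that the three ``diamonds'' of $B_3^=$ commute --- and this is exactly where $f\res_{\bA_0}=g\res_{\bA_0}$ is used: over $\set{0,1}$ both paths give $f\circ j$, over $\set{0,2}$ both give $g\circ j$, and over $\set{1,2}$ both give $f\circ j=g\circ j$. Note also that every transition map of $\vec\bA$ is one-to-one, being one of $j$, $f$, $g$, $\id_{\bA_2}$, $f\circ j$.

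To invoke the amalgamation property I still need a finite Boolean semilattice on top of $\Conc\vec\bA$. I take $\bS:=\two$ and, for each $p\in B_3^=$, I let $\sigma_p\colon\Conc\bA_p\to\two$ send the least congruence to~$0$ and every other compact congruence to~$1$; this is a \jzh. Since each transition map $\alpha_p^q$ is one-to-one, $\Conc\alpha_p^q$ carries a nonzero compact congruence to a nonzero one, so $\famm{\sigma_p}{p\in B_3^=}$ is a natural transformation from $\Conc\vec\bA$ to the one-object diagram~$\two$. Three-dimensional amalgamation then supplies $\bA\in\cC$, compatible homomorphisms $\alpha_p\colon\bA_p\to\bA$ for $p\in B_3^=$, and an isomorphism $\eps\colon\Conc\bA\to\two$ with $\sigma_p=\eps\circ\Conc\alpha_p$ for every~$p$.

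The conclusion then follows by a short diagram chase. Compatibility of the cocone $\famm{\alpha_p}{p\in B_3^=}$ with the four identity transition maps forces $\alpha_{\set{0,1}}=\alpha_{\set{1}}=\alpha_{\set{1,2}}=\alpha_{\set{2}}=\alpha_{\set{0,2}}$; write $h\colon\bA_2\to\bA$ for this common map. Compatibility along the covers $\set{0}\leq\set{0,1}$ and $\set{0}\leq\set{0,2}$ gives $h\circ f=\alpha_{\set{0}}=h\circ g$, so it remains to prove that $h$ is one-to-one. Since $\Conc\bA\cong\two$, the congruence lattice $\Con\bA$ has exactly two elements, the least congruence $\Delta_\bA$ and the largest one $\nabla_\bA$ (which are distinct), and $\eps$ sends $\Delta_\bA$ to~$0$ and $\nabla_\bA$ to~$1$. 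For distinct $x,y\in\bA_2$, the principal congruence~$\Theta$ of $\bA_2$ generated by $(x,y)$ is a nonzero compact congruence, hence $\sigma_{\set{0,1}}(\Theta)=1$, so $\Conc h(\Theta)=\eps^{-1}(1)=\nabla_\bA\neq\Delta_\bA$; since $\Conc h(\Theta)$ is the congruence of $\bA$ generated by $(h(x),h(y))$, this forces $h(x)\neq h(y)$. Therefore $h$ is one-to-one, and $h\circ f=h\circ g$ yields $f=g$.

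The step that I expect to demand the actual idea --- as opposed to routine bookkeeping --- is the design of $\vec\bA$: the four identity edges together with the single shared bottom vertex $\bA_0$ are precisely what makes the colimit cocone collapse onto a \emph{single} map $h\colon\bA_2\to\bA$ through which both $f$ and $g$ factor, while pasting~$\two$ on top via the ``largest possible'' maps $\sigma_p$ is what guarantees that $h$ is injective. Checking that $\vec\bA$ is a genuine direct system, that $\famm{\sigma_p}{p\in B_3^=}$ is natural, and the final computation with~$\Conc$ are all straightforward.
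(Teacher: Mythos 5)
Your proposal is correct and follows essentially the same strategy as the paper: apply three-dimensional amalgamation to a truncated cube assembled from $\bA_0$, $\bA_1$, $\bA_2$, $f$, $g$ whose one nontrivial commuting face encodes $f\res_{\bA_0}=g\res_{\bA_0}$, cap it with $\two$ via the \jzh s separating zero (naturality coming from injectivity of the transition maps), and then use the isomorphism $\eps\colon\Conc\bA\to\two$ to show the cocone map on $\bA_2$ is one-to-one and equalizes $f$ and $g$. The only difference is the cosmetic labelling of the cube (the paper puts copies of $\bA_1$ at the singletons and at $\set{0,1},\set{1,2}$, with $f,g$ mapping into $\bA_{\set{0,2}}=\bA_2$, and concludes via $u=v$ and $w$ injective), which changes nothing essential.
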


\begin{proof}
Assume that $f\res_{\bA_0}=g\res_{\bA_0}$.
Set $\two:=\set{0,1}$, and for each $i\in\set{1,2}$, denote by $\sigma_i\colon\Conc\bA_i\to\two$ the unique \jzh\ that sends every nonzero element to~$1$. We apply three-dimensional amalgamation to the diagrams represented in Figure~\ref{Fig:Con31}, minus~$\bA$, $\Conc\bA$, and the dotted arrows. Note that the one-to-oneness of both~$f$ and~$g$ ensures that both maps $\ol{f}:=\Conc f$ and $\ol{g}:=\Conc g$ separate zero, thus the right hand side diagram commutes. Three-dimensional amalgamation ensures the existence of~$\bA$ together with homomorphisms $u,v\colon\bA_1\to\bA$ and $w\colon\bA_2\to\bA$ with an isomorphism $\eps\colon\Conc\bA\to\two$ such that both diagrams of Figure~\ref{Fig:Con31} (dotted arrows included) commute. Again, we are using the notational convention $\ol{w}:=\Conc w$, and so on. The unlabeled arrows should be obvious, for example the arrow from~$\bA_0$ to~$\bA_1$ is the inclusion map.

\begin{figure}[htb]
 \[
\xymatrixrowsep{2pc}\xymatrixcolsep{1.5pc}
\def\labelstyle{\displaystyle}
\xymatrix{
&&&&\two & \\
& \bA &&& \Conc\bA\ar@{-->}[u]_(.4){\eps}^(.4){\cong} &\\
\bA_1\ar@{-->}[ur]^u
& \bA_2\ar@{-->}[u]^(.4)w
& \bA_1\ar@{-->}[ul]_v &
\Conc\bA_1\ar@{-->}[ur]_(.42){\ol{u}}
\ar@/^1pc/[uur]^{\sigma_1}
& \Conc\bA_2\ar@{-->}[u]_(.3){\ol{w}}
\ar@/^1.8pc/[uu]^(.5){\sigma_2\!}
& \Conc\bA_1\ar@{-->}[ul]^(.42){\ol{v}}
\ar[uul]_{\sigma_1}\\
\bA_1\ar@{=}[u]\ar@{^(->}[ur]_(.35){\!\!\!f} & \bA_1\ar@{=}[ul]\ar@{=}[ur] 
& \bA_1\ar@{_(->}[ul]^(.35){g\!\!}\ar@{=}[u] &
\Conc\bA_1\ar@{=}[u]\ar[ur]_(.4){\!\!\!\ol{f}} & \Conc\bA_1\ar@{=}[ul]\ar@{=}[ur]
& \Conc\bA_1\ar[ul]^(.4){\ol{g}\!\!}\ar@{=}[u]\\
& \bA_0\ar@{_(->}[ul]\ar@{_(->}[u]\ar@{^(->}[ur] &&& \Conc\bA_0\ar[ul]\ar[u]\ar[ur] &
}
 \]
\caption{A special case of three-dimensional amalgamation}
\label{Fig:Con31}
\end{figure}
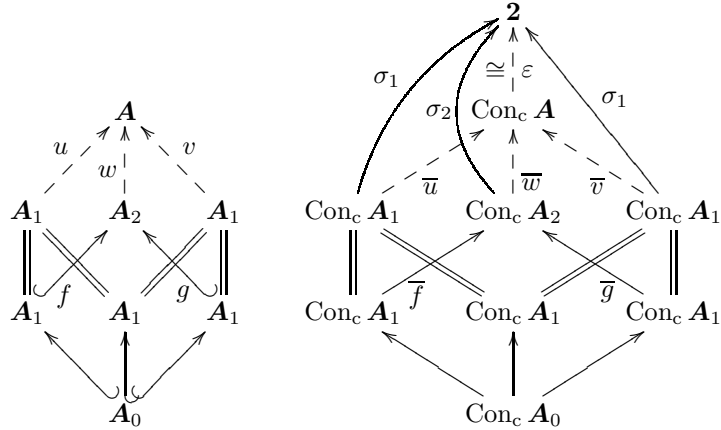

{}From the commutativity of the right hand side diagram it follows that $\sigma_2=\eps\circ\ol{w}$. As~$\sigma_2$ separates zero and~$\eps$ is an isomorphism, $\ol{w}$ separates zero, which means that~$w$ is one-to-one. {}From the commutativity of the left hand side diagram it follows that $u=w\circ f$, $v=w\circ g$, and $u=v$; so $w\circ f=w\circ g$. As~$w$ is one-to-one, we obtain that $f=g$.
\end{proof}

Proposition~\ref{P:3Amalg} suggests at first sight that it is quite hopeless to achieve any representation proof in cardinality~$\aleph_2$ \emph{via} three-dimensional amalgamation. Although some hope remains to apply three-dimensional amalgamation on special configurations, such as \emph{strong amalgamation}, no such application has been found so far.

Even in case, some day, someone manages to work out a usable version of three-dimensional amalgamation, another catch is the \emph{existence of ladders}. Indeed, in order to be able to extend the proof of Proposition~\ref{P:Con2toAl1} to classes satisfying some version of three-dimensional amalgamation, we need the existence of a $3$-ladder of cardinality~$\aleph_2$. This problem was first raised in Ditor~\cite{Dito84}.

\begin{all}{Ditor's Problem}
Does there exist a $3$-ladder of cardinality~$\aleph_2$?
\end{all}

The following result, by Wehrung~\cite{3ladd}, shows that a positive solution of Ditor's Problem is at least consistent with~$\mathsf{ZFC}$.

\begin{theorem}\label{T:Ex3Ladd}
Suppose that either the axiom $\mathsf{MA}(\aleph_1;\text{precaliber }\aleph_1)$ holds or there exists a gap-$1$ morass. Then there exists a $3$-ladder of cardinality~$\aleph_2$.
\end{theorem}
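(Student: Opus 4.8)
The plan is to construct, under either hypothesis, a lower finite lattice of cardinality $\aleph_2$ in which every element has at most three lower covers; equivalently, one builds a ``broad enough'' tree-like indexing scheme that still embeds into a product of few chains. Both hypotheses are classical devices for building objects of size $\aleph_2$ by a chain of approximations of size $\les\aleph_1$, where each approximation is a $3$-ladder (or a ``local'' piece thereof) and the union remains a $3$-ladder; the combinatorial hypothesis is exactly what lets one glue the $\aleph_1$-sized stages together while controlling the number of lower covers.

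First I would set up the morass case. Recall that a gap-$1$ morass provides a system $(\theta_\alpha\mid\alpha\in S)$ of ordinals $\theta_\alpha\les\aleph_1$ indexed by a tree $S$ of height $\aleph_2$, together with commuting, order-preserving maps between the associated structures $\theta_\alpha$, such that $\aleph_2=\bigcup$ of the directed images. The idea is to define the $3$-ladder as a directed colimit of $2$-ladders of cardinality $\aleph_1$ along the morass maps: Proposition~\ref{P:2laddAl1} furnishes a $2$-ladder $F$ of size $\aleph_1$, one places a copy of (an initial piece of) $F$ at each node $\alpha$, and the morass maps — being one-to-one, order-preserving, and cofinal in a strong sense — let one amalgamate. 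Each element of the colimit acquires at most one ``new'' lower cover coming from the morass branching, on top of the at most two lower covers it already had inside its copy of $F$; the morass axioms (in particular the amalgamation/coherence of the maps at limits) are what guarantee lower finiteness of the colimit and that no element picks up a third new cover. One must of course verify that the colimit is a lattice, not merely a poset; this is arranged by taking, as in Ditor's and Dobbertin's constructions, meets and joins to be computed levelwise and checking that the morass maps are lattice embeddings on the relevant finite sublattices.

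For the Martin's-Axiom case, the strategy instead is a forcing-style (or direct transfinite) construction: build an increasing $\aleph_2$-chain $(\bL_\xi\mid\xi<\omega_2)$ of $3$-ladders with $\card\bL_\xi\les\aleph_1$, taking unions at limits of cofinality $\omega$ or $\omega_1$. The axiom $\mathsf{MA}(\aleph_1;\text{precaliber }\aleph_1)$ is used to handle the limit stages of cofinality $\omega_1$: the poset of finite ``approximations'' to the next $\aleph_1$-sized extension has precaliber $\aleph_1$, so a suitably generic filter over the $\aleph_1$ many dense sets (one for each element that must receive its lower covers and one for each pair that must receive a meet and a join) produces $\bL_{\xi+1}\supseteq\bL_\xi$ that is again a $3$-ladder. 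The precaliber hypothesis is precisely what ensures the relevant posets are ccc (indeed precaliber $\aleph_1$) uniformly, so that $\mathsf{MA}(\aleph_1)$ applies; lower finiteness is maintained because each new element is glued in with a prescribed finite down-set.

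The hard part, in both cases, will be the bookkeeping that keeps the ladder property \emph{and} the lattice property simultaneously through $\aleph_2$ stages: ensuring that amalgamating two $3$-ladders of size $\les\aleph_1$ over a common sub-$3$-ladder can be done without creating an element with four lower covers, and without destroying existence of meets and joins. This is where the specific structure of Ditor's $2$-ladder of size $\aleph_1$ and the precise form of the morass maps (resp. the precaliber-$\aleph_1$ genericity) must be exploited: one wants the amalgamating maps to be \emph{lower-cover-preserving} on a cofinal part and to reflect lower covers, so that the number of lower covers of a colimit element is bounded by the maximum over the factors plus a fixed small constant. Verifying that such maps exist — essentially a one-step amalgamation lemma for $3$-ladders, fed by either combinatorial hypothesis — is the technical core; the transfinite recursion assembling the global ladder from it is then routine.
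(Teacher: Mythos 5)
Note first that this survey does not actually contain a proof of Theorem~\ref{T:Ex3Ladd}: it is quoted from Wehrung~\cite{3ladd}, and the only hint given here is that the proof uses Velleman's simplified morasses~\cite{Vell84a}. So your text has to stand on its own as a proof sketch, and it does not: it is a plan whose load-bearing steps are asserted rather than established, as you yourself concede (``verifying that such maps exist \dots\ is the technical core''). Concretely, the two claims on which everything rests --- (a) that gluing copies of Ditor's $2$-ladder of Proposition~\ref{P:2laddAl1} along the morass maps adds at most one ``new'' lower cover per element while keeping the colimit a lower finite lattice, and (b) that the poset of finite approximations to a proper end-extension of a given $3$-ladder of cardinality $\aleph_1$ has precaliber~$\aleph_1$ and that meeting $\aleph_1$ dense sets yields again a $3$-ladder --- are precisely the content of the theorem, and no construction or verification is offered for either. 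The extension step is genuinely hard: to place even one new element $c$ above a $3$-ladder $\bL$ of cardinality $\aleph_1$ (kept as a lower subset) you must simultaneously adjoin joins $c\vee x$ for all $\aleph_1$ many $x\in\bL$, each new element having a finite down-set and at most three lower covers; the naive analogue of Ditor's $\aleph_1$-construction breaks because the countable approximations to $\bL$ no longer have largest elements, and it is exactly here that some $\omega_1$-coherence (the morass, or a specific consequence of $\mathsf{MA}(\aleph_1;\text{precaliber }\aleph_1)$) must be injected --- which your sketch does not do.

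There are also concrete inaccuracies that undermine the plan as stated. A gap-$1$ morass is not ``a tree $S$ of height $\aleph_2$'' of structures of size $\les\aleph_1$: it has levels indexed by $\alpha\les\omega_1$, with countable levels $\theta_\alpha<\omega_1$ below the top and top level of order type $\omega_2$, so your gluing scheme is built on the wrong picture of the hypothesis. In the Martin's Axiom case, you first say limits of cofinality $\omega_1$ are handled by unions and then that $\mathsf{MA}$ is applied at those limit stages, while the step you describe (producing $\bL_{\xi+1}\supseteq\bL_\xi$ from finite conditions) is a successor step; and even granting each single application, an $\aleph_2$-length recursion of separate $\mathsf{MA}(\aleph_1)$ applications provides no coherence between the $\aleph_2$ successive choices, so nothing in your argument rules out arriving at a $3$-ladder of cardinality $\aleph_1$ that admits no proper end-extension at all. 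In short, what you have written is a reasonable heuristic for why morasses or $\mathsf{MA}$ are plausible hypotheses, but the amalgamation/coherence lemma that does the work --- the heart of~\cite{3ladd} --- is missing, so the proposal does not prove the theorem.
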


In particular, as every $\mathbf{L}[A]$, for $A\subseteq\omega_1$, has a gap-$1$ morass (cf. Devlin \cite{Devl84}), it follows that \emph{If there is no $3$-ladder of cardinality~$\aleph_2$, then~$\omega_2$ is inaccessible in the constructible universe~$\mathbf{L}$}.

The axiom $\mathsf{MA}(\aleph_1;\text{precaliber }\aleph_1)$, which is Martin's Axiom restricted to collections of~$\aleph_1$ dense subsets in posets of precaliber~$\aleph_1$ (cf. Weiss \cite[Section~3]{Weiss}), holds in a generic extension of the universe by a suitable notion of forcing satisfying the \emph{countable chain condition} (cf. Jech \cite[Section~16]{Jech03}). On the other hand, a gap-$1$ morass exists in a generic extension by a suitable \emph{countably closed} notion of forcing (cf. Velleman~\cite[Section~3.3]{Vell82}; see also Brooke-Taylor and Friedman~\cite[Proposition~25]{BTFr09}). As countably closed posets with the countable chain condition are, from the forcing viewpoint, trivial, the two notions of forcing involved in those generic extensions cannot be the same: they must be, in some sense, ``orthogonal''! This suggests a positive solution to Ditor's Problem in~$\mathsf{ZFC}$, nevertheless no such solution has been found so far.

Our proof of Theorem~\ref{T:Ex3Ladd} uses the equivalent form of morasses called \emph{simplified morasses} introduced in Velleman~\cite{Vell84a}.

It seems reasonable to expect that higher morasses, or higher versions of Martin's Axiom, would imply the existence of $k$-ladders of cardinality~$\aleph_{k-1}$ for $k\geq 4$. In an ideal world, Ditor's Problem would first be settled in~$\mathsf{ZFC}$ alone. However, we are not living in an ideal world\dots

\section{CLL, larders, and lifters}\label{S:CLL}

Propositions~\ref{P:Con1toCtble} and~\ref{P:Con2toAl1} are typical illustrations of a ``finite'' representation result on the functor~$\Conc$ making it possible to prove an ``infinite'' representation result. Now there are situations where the problematic is just opposite: namely, going from an infinite representation result to a finite one. Finite would appear as a limit case of infinite! A paradigm for such situations (\emph{critical points}) will shortly be given in Section~\ref{S:CritPt}.

The arguments underlying Propositions~\ref{P:Con1toCtble} and~\ref{P:Con2toAl1} are best set in a \emph{category theory} context. Basically, we are given \emph{categories}~$\cA$, $\cB$, $\cS$ together with \emph{functors} $\Phi\colon\cA\to\cS$ and~$\Psi\colon\cB\to\cS$. We are trying to find an assignment $\Gamma\colon\cA\to\cB$ such that $\Phi(A)\cong\Psi\Gamma(A)$, \emph{naturally in~$A$}, on a ``large'' subcategory of~$\cA$. We can paraphrase this further by saying that we are trying to make~$\Gamma$ ``as functorial as possible''. Hence we need an assumption of the form ``for many $A\in\cA$, there exists $B\in\cB$ such that $\Phi(A)\cong\Psi(B)$''. The situation is illustrated in Figure~\ref{Fig:GetFctGam}.

\begin{figure}[htb]
 \[
 \def\labelstyle{\displaystyle}
 \xymatrix{
 & \cS & & & & \cS &\\
 \cA\ar[ur]^{\Phi} & 
 \save+<0ex,-5ex>\drop{\text{Starting from this\dots}}\restore
 & \cB\ar[ul]_{\Psi} & & \cA\ar[ur]^{\Phi}\ar[rr]^{\Gamma}
 &
 \save+<0ex,-5ex>\drop{\text{\dots getting to that}}\restore
 & \cB\ar[ul]_{\Psi}
 }
 \]
\caption{Factoring~$\Phi$ through~$\Psi$}
\label{Fig:GetFctGam}
\end{figure}
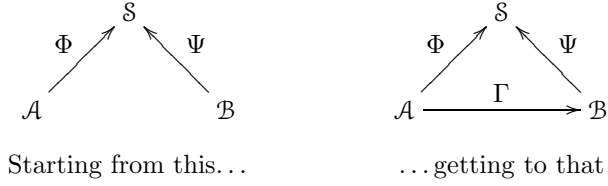

This problem is tackled, in Gillibert and Wehrung~\cite{Larder}, by the introduction of a whole theory, involving a categorical concept called a \emph{larder}. A larder consists of categories~$\cA$, $\cB$, $\cS$ together with functors $\Phi\colon\cA\to\cS$ and~$\Psi\colon\cB\to\cS$, plus a few add-ons, namely full subcategories~$\cA^\dagger\subseteq\cA$ and $\cB^\dagger\subseteq\cB$ together with a subcategory $\cS^{\Rightarrow}\subseteq\cS$. These add-ons are required to satisfy a few obvious-looking preservation properties (e.g., directed colimits), plus a less obvious-looking ``L\"owenheim-Skolem type'' property with respect to the functor~$\Psi$. An important additional attribute of a larder, reminiscent of the First Isomorphism Theorem in group theory, is called \emph{projectability}. Most applications of our theory involve a result called the ``Condensate Lifting Lemma'', or CLL, which we paraphrase below.

\begin{all}{Condensate Lifting Lemma}
Suppose that $\cA$, $\cB$, $\cS$, $\Phi$, and~$\Psi$ are part of a projectable larder and let~$P$ be a ``nice'' poset. If
 \[
 (\text{For many }A\in\cA)(\exists B\in\cB)\bigl(\Phi(A)\cong\Psi(B)\bigr)\,,
 \]
then
 \[
 (\text{For many }\vec A\in\cA^P)(\exists\vec B\in\cB^P)
 \bigl(\Phi(\vec A)\cong\Psi(\vec B)\bigr)\,.
 \]
\end{all}

In this statement, $\cA^P$ denotes the category whose objects are all the direct systems $\famm{A_p,\alpha_p^q}{p\leq q\text{ in }P}$, with~$A_p$ an object of~$\cA$ and $\alpha_p^q\colon A_p\to A_q$, with the natural transformations as morphisms. We shall go back to the meaning of ``nice poset'' in a moment.

To paraphrase CLL further, if~$\Psi$ lifts many \emph{objects}, then it lifts many ($P$-indexed) \emph{diagrams}. Yet another way to see this is that CLL turns \emph{diagram counterexamples} to \emph{object counterexamples}. Applications of CLL, or its main precursor the \emph{Armature Lemma}, include the following.

\begin{itemize}
\item[---] Let~$\cV$ be a nondistributive variety of lattices. Then the free lattice (resp., free bounded lattice) in~$\cV$ on~$\aleph_1$ generators has no congruence-permutable, congruence-preserving extension (cf. Gillibert and Wehrung~\cite{Larder}).

\item[---] There exists a non-coordinatizable sectionally complemented modular lattice (without unit), of cardinality~$\aleph_1$, with a large $4$-frame (cf. Wehrung~\cite{Banasch2}).

\item[---] There exists a lattice of cardinality~$\aleph_1$, in the variety generated by the five-element modular non-distributive lattice~$M_3$, without any congruence $n$-permutable, congruence-preserving extension for any positive integer~$n$ (cf. Gillibert \cite{GilCat}).

\item[---] Let~$\cA$ and~$\cB$ be varieties of lattices such that~$\cA$ is contained neither in~$\cB$ nor its dual, and every simple member of~$\cB$ has a prime interval. Then the critical point $\crit{\cA}{\cB}$ (cf. Definition~\ref{D:CritPt}) lies below~$\aleph_2$ (cf. Gillibert~\cite{Gill3}).

\item[---] There is a unital exchange ring~$R$, of cardinality~$\aleph_3$, such that the monoid $\rV(R)$ of all isomorphism types of finitely generated right $R$-modules is not isomorphic to~$\rV(B)$ for any ring~$B$ that is either von~Neumann regular or a C*-algebra of real rank zero (cf. Wehrung~\cite{VLift}).

\item[---] Let~$\cA$ and~$\cB$ be locally finite varieties of algebras such that every finite \jzs\ has, up to isomorphism, only finitely many liftings, with respect to the functor~$\Conc$, in~$\cB$, and every such lifting is finite. (\emph{The latter condition holds, in particular, if~$\cB$ omits the Tame Congruence Theory types~$\mathbf{1}$ and~$\mathbf{5}$; thus, in particular, if~$\cB$ satisfies a nontrivial congruence lattice identity, see Hobby and McKenzie~\cite{HoMK88}}.) If $\Conc\cA$ is not contained in~$\Conc\cB$, then $\crit{\cA}{\cB}\leq\aleph_2$ (cf. Gillibert~\cite{Gill4}).

\end{itemize}

There is still one completely undefined term in the statement of CLL above, namely ``nice'', as an attribute of a poset. Not every poset qualifies as ``nice''. The definition of ``niceness'', formally \emph{existence of a $\lambda$-lifter} ($\lambda$ is a certain infinite cardinal depending of the data), is relatively complex and thus we shall relegate it to Section~\ref{S:App} (cf. Definition~\ref{D:Lifter}). Nonetheless, it is still possible to describe with relatively little effort which posets are dealt with by CLL, and which are not. Observe that these definitions have, at first sight, little to do with category theory.

\begin{notation}\label{Not:sor}
Let~$X$ be a subset in a poset~$P$. We denote by $\Sor X$ the set of all minimal elements of~$P\Upw X$.
\end{notation}

\begin{definition}\label{D:PJS}
We say that a subset~$X$ in a poset~$P$ is \emph{$\sor$-closed} if $\Sor Y\subseteq X$ for any finite $Y\subseteq X$. The \emph{$\sor$-closure} of a subset~$X$ of a poset~$P$ is the least~$\sor$-closed subset of~$P$ containing~$X$. We say that~$P$ is
\begin{itemize}
\item a \emph{\pjs} if for each finite $X\subseteq P$ there exists a finite $Y\subseteq P$ such that $P\Upw X=P\upw Y$ (cf. Section~\ref{Su:Posets});

\item \emph{supported} if~$P$ is a \pjs\ and the $\sor$-closure of every finite subset of~$P$ is finite;

\item an \emph{\ajs} if~$P$ is a \pjs\ and~$P\dnw a$ is a \js\ for each $a\in P$.
\end{itemize}
\end{definition}

\emph{Every \jzs\ \pup{thus, in particular, every lattice with zero} is an \ajs}. Furthermore, every \ajs\ is supported and every supported poset is a \pjs. Both converse containments fail, as illustrated in Figure~\ref{Fig:ThreePosets}; the leftmost poset is not a \pjs.

\begin{figure}[htb]
\includegraphics{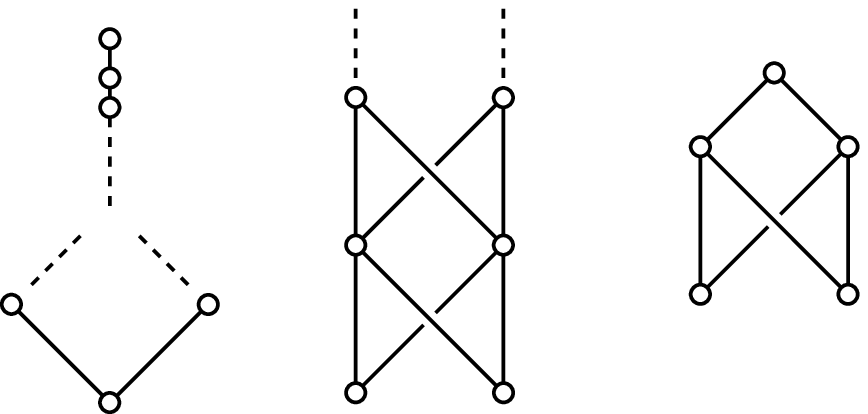}
\caption{Posets that are not \ajs s}
\label{Fig:ThreePosets}
\end{figure}

As mentioned above, ``nice'' in the statement of CLL means having a $\lambda$-lifter, where, roughly speaking, $\lambda$ is an upper bound on the sizes of the categorical data. This brings a severe restriction on the shape of~$P$, involving a connection between the objects of Definition~\ref{D:PJS} and the definition of a lifter (Definition~\ref{D:Lifter}): namely, \emph{If~$P$ has a $\lambda$-lifter, then it is an \ajs}. More precisely, the following result is proved in Gillibert and Wehrung~\cite{Larder}. It relates existence of lifters, \ajs s, and the more familiar-looking (at least for those familiar with large free sets problems) infinite combinatorial statement in~(ii).

\begin{theorem}\label{T:ShapeLiftable}
Let $\lambda$ be an infinite cardinal and let $(X,\bX)$ be a $\lambda$-lifter of a poset~$P$. Put $\kappa:=\card\bX$. Then the following statements hold:
\begin{enumerate}
\item $P$ is a disjoint union of finitely many \ajs s with zero.

\item For every \emph{isotone} map $F\colon[\kappa]^{<\cf(\lambda)}\to[\kappa]^{<\lambda}$, there exists a \emph{one-to-one} map $\sigma\colon P\mono\kappa$ such that
 \[
 (\forall a<b\text{ in }P)\bigl(F\sigma(P\dnw a)\cap\sigma(P\dnw b)
 \subseteq\sigma(P\dnw a)\bigr)\,.
 \]
\end{enumerate}
\end{theorem}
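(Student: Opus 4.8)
The plan is to argue directly from the definition of a $\lambda$-lifter (see Definition~\ref{D:Lifter}, and Gillibert and Wehrung~\cite{Larder}). Recall that such a lifter $(X,\bX)$ comes with a poset~$X$ of a prescribed lattice-like shape, a projection~$\partial$ of~$X$ onto~$P$ with a top element adjoined, having $\bX$ as the fibre over that top, together with a combinatorial ``smallness'' clause, governed by~$\lambda$, describing how the part of~$X$ lying below each element of~$\bX$ sits over the corresponding principal ideal of~$P$. Both conclusions come out of unwinding this data. For~(i) I would transport to~$P$, via~$\partial$, the structural clauses that the lifter imposes on~$X$: lifting a finite $Y\subseteq P$ to~$X$, applying there the clause that forces $X$ to be a \pjs, and projecting back down — checking that $\partial$ sends a set of minimal upper bounds to a set of minimal upper bounds, because of how it treats the adjoined top — shows that~$P$ is a \pjs; the same recipe yields that each $P\dnw a$ is a \js\ and that the $\sor$-closure (in the sense of Definition~\ref{D:PJS}) of every finite subset of~$P$ is finite; and the finiteness clause on~$X$ (that it has a least element, or at most finitely many minimal elements) forces, $\partial$ being isotone, that~$P$ has only finitely many connected components, each with a least element. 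Assembling these gives~(i).

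For~(ii), fix a bijection $\kappa\cong\bX$, so that the given isotone $F\colon[\bX]^{<\cf(\lambda)}\to[\bX]^{<\lambda}$ becomes a small-valued operator on~$\bX$. The combinatorial clause of Definition~\ref{D:Lifter} is a ``free-set/dodging'' statement of essentially this flavour, so the plan is: (a) using the lifter's correspondence between~$\bX$ and the principal ideals of~$P$, turn~$F$ into a map of exactly the shape that clause quantifies over — this is where one must thread the distinction between $\cf(\lambda)$ and~$\lambda$, so that $F$ evaluated on the (size $<\cf(\lambda)$) images of the sets $P\dnw a$ stays within the (size $<\lambda$) budget that the lifter provides; (b) invoke the clause to obtain an isotone section $\sigma\colon P\to\bX$ dodging that map; (c) check that $\sigma$ is one-to-one — injectivity coming from the lifter separating the fibres over distinct points of~$P$ — and that, for $a<b$ in~$P$, any member of $\sigma(P\dnw b)$ that is the image of some $c\le b$ with $c\not\le a$ was forced by the dodging to avoid $F\sigma(P\dnw a)$, which is exactly the assertion $F\sigma(P\dnw a)\cap\sigma(P\dnw b)\subseteq\sigma(P\dnw a)$.

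The step I expect to be the real obstacle is (a)--(c) of~(ii): building the precise dictionary between the ``native'' form of the clause in Definition~\ref{D:Lifter} (phrased via ideals of~$X$ and~$\partial$) and the clean statement about $[\kappa]^{<\cf(\lambda)}$ and $[\kappa]^{<\lambda}$, while simultaneously keeping $\sigma$ injective and keeping every auxiliary set strictly below its cardinal threshold. In particular one needs $\card(P\dnw a)<\cf(\lambda)$ for each $a\in P$, so that $F$ is even applicable to $\sigma(P\dnw a)$; this should itself be a consequence of the lifter hypothesis, but pinning it down, and arranging the passage from~$F$ to the map that the clause consumes to be monotone enough for the clause to bite, is the crux. (If the clause of Definition~\ref{D:Lifter} is already stated in a $[\kappa]^{<\cf(\lambda)}$ form, then the weight shifts instead onto the ``finitely many components, each with a least element'' part of~(i), which would become the least routine point.)
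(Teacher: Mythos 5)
First, a caveat: the paper does not prove Theorem~\ref{T:ShapeLiftable} at all --- it quotes it from Gillibert and Wehrung~\cite{Larder} --- so your argument has to be judged against the actual Definitions~\ref{D:NormCov} and~\ref{D:Lifter}. Measured against these, your treatment of part~(i) fails. A $\lambda$-lifter has no ``adjoined top'', no fibre of~$\partial$ over a top element, and no ``finiteness clause on~$X$'' beyond~$X$ being a \pjs; and $\partial$ is merely \emph{isotone}, so it does not carry minimal upper bounds to minimal upper bounds and none of the structure of~$X$ transports to~$P$ in the way you describe. In fact no argument using only the norm-covering data $(X,\partial)$ can prove~(i): if~$P$ is any \pjs\ that is not an \ajs\ (such posets appear in Figure~\ref{Fig:ThreePosets}), then $X:=P$ with $\partial:=\id_P$ is a perfectly good norm-covering of~$P$, yet the theorem says exactly that such a~$P$ admits no $\lambda$-lifter. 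Hence conclusion~(i) must be extracted from the combinatorial clause~(ii) of Definition~\ref{D:Lifter} (together with clause~(i), and clause~(iii) when $\lambda=\aleph_0$), which your argument for~(i) never invokes; that is the actual content of this part, and it is missing.

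For part~(ii) your outline points in a reasonable direction but stops precisely at the step you yourself identify as the obstacle, and that step is where the proof lives. The dictionary is roughly this: identify~$\kappa$ with~$\bX$; for $\bx\in\bX^=$ let $S(\bx)$ consist of one witness $e_{\by}\in\by$ with $\partial e_{\by}=\partial\by$ (this uses sharpness of the ideals) for each $\by\in F(I_{\bx})$, where $I_{\bx}$ is the principal ideal of~$\bX^=$ generated by~$\bx$; clause~(i) of Definition~\ref{D:Lifter} gives $\card I_{\bx}<\cf(\lambda)$, which is what makes~$F$ applicable --- not a separate hypothesis on $\card(P\dnw a)$ --- and the \emph{isotonicity} of~$F$ is what lets one replace the unknowable set $\sigma(P\dnw a)$ by the knowable $I_{\sigma(a)}$ when defining~$S$. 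The lifter then yields an isotone section~$\sigma$ with $S\sigma(a)\cap\sigma(b)\subseteq\sigma(a)$ for $a<b$, and if $\sigma(c)\in F\sigma(P\dnw a)$ with $c\leq b$, the witness $e_{\sigma(c)}$ lies in $S\sigma(a)\cap\sigma(b)$, hence in~$\sigma(a)$, whence $c=\partial e_{\sigma(c)}\leq\partial\sigma(a)=a$, which is the displayed condition; injectivity of~$\sigma$ is immediate because~$\sigma$ is a section of~$\partial$, not because of any ``separation of fibres''. Since you supply neither this construction (nor any substitute for it) nor a valid route to~(i), the proposal has genuine gaps in both parts.
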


In particular, as none of the posets represented on Figure~\ref{Fig:ThreePosets} is an \ajs, none of them has a lifter. As to the statement of CLL, these posets are not ``nice''.

Item~(ii) of Theorem~\ref{T:ShapeLiftable} bears an intriguing relationship with the problem of \emph{well-foundedness} of liftable posets. The following result is proved in Gillibert and Wehrung~\cite{Larder}.

\begin{theorem}\label{T:WFstrCard}
Let $\lambda$ be an infinite cardinal such that
 \[
 (\forall\mu<\cf(\lambda))\bigl(\mu^{\aleph_0}<\lambda\bigr)\,.
 \]
If a poset has a $\lambda$-lifter, then it is well-founded.
\end{theorem}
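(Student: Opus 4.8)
The plan is to prove the contrapositive, using item~(ii) of Theorem~\ref{T:ShapeLiftable} as essentially the only input, together with the triviality that the ordinals carry no strictly descending $\omega$-sequence. First I would read off from the definition of a $\lambda$-lifter (Definition~\ref{D:Lifter}, Section~\ref{S:App}) the one fact needed to make Theorem~\ref{T:ShapeLiftable}(ii) applicable: if $(X,\bX)$ is a $\lambda$-lifter of~$P$ and $\kappa:=\card\bX$, then $\card(P\dnw p)<\cf(\lambda)$ for every $p\in P$, so that $\sigma\mapsto F\sigma(P\dnw p)$ is meaningful for any $F\colon[\kappa]^{<\cf(\lambda)}\to[\kappa]^{<\lambda}$. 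Assuming now that $P$ is \emph{not} well-founded, fix a strictly descending sequence $a_0>a_1>a_2>\cdots$ in~$P$; the goal is to manufacture one isotone $F$ for which \emph{no} map $\sigma$ can satisfy the conclusion of~(ii), contradicting Theorem~\ref{T:ShapeLiftable}(ii).

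The heart of the argument is the choice of~$F$. Fix a well-ordering $\prec$ of the set~$\kappa$ and, for $A\in[\kappa]^{<\cf(\lambda)}$, let $F(A):=A\cup\setm{x\in\kappa}{(\exists y\in A)(x\prec y)}$ be the $\prec$-downward closure of~$A$; this map is plainly isotone. Granting for the moment that $F$ is valued in $[\kappa]^{<\lambda}$, apply Theorem~\ref{T:ShapeLiftable}(ii) to get a one-to-one $\sigma\colon P\mono\kappa$ with $F\sigma(P\dnw a)\cap\sigma(P\dnw b)\subseteq\sigma(P\dnw a)$ whenever $a<b$ in~$P$. Put $S_n:=\sigma(P\dnw a_n)$. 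For $n\geq 1$ we have $a_n<a_{n-1}$, hence $F(S_n)\cap S_{n-1}\subseteq S_n$; moreover $\sigma(a_{n-1})\in S_{n-1}\setminus S_n$ (because $a_{n-1}\not\leq a_n$ and $\sigma$ is one-to-one), while $\sigma(a_n)\in S_n$. If $\sigma(a_{n-1})\prec\sigma(a_n)$ held, then $\sigma(a_{n-1})$ would lie in the $\prec$-downward closure of~$S_n$, hence in $F(S_n)\cap S_{n-1}\subseteq S_n$, contradicting $\sigma(a_{n-1})\notin S_n$; since $\prec$ is total and $\sigma$ one-to-one, we conclude $\sigma(a_{n-1})\succ\sigma(a_n)$. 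As $n\geq 1$ was arbitrary, $\sigma(a_0)\succ\sigma(a_1)\succ\sigma(a_2)\succ\cdots$ is a strictly $\prec$-descending $\omega$-sequence, contradicting well-foundedness of~$\prec$. Hence $P$ is well-founded.

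It remains to make $F$ land in $[\kappa]^{<\lambda}$, and this is both where the hypothesis $(\forall\mu<\cf(\lambda))(\mu^{\aleph_0}<\lambda)$ is spent and where I expect the only real difficulty to lie. When $\card\bX\leq\lambda$ one simply takes $\prec$ to be the natural ordering of~$\kappa$: a set~$A$ of fewer than $\cf(\lambda)$ ordinals is not cofinal in~$\lambda$, so $\sup A<\lambda$ and $\card F(A)=\card(\sup A)<\lambda$ — no cardinal arithmetic is needed in this case, and the argument above goes through verbatim. In general $\card\bX$ may be larger, so a literal downward closure is too big; one then replaces $F$ by the analogous operator built from the $\leq\aleph_0$-generated pieces of~$A$ (well-ordering each countable ``block'' separately and closing downward only inside it), for which the bound $\card F(A)\leq\card A+\card A^{\aleph_0}<\lambda$ holds precisely because $\card A<\cf(\lambda)$ forces $\card A^{\aleph_0}<\lambda$. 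Arranging this refined~$F$ so that the descent $\sigma(a_{n-1})\succ\sigma(a_n)$ still follows — i.e. so that the relevant comparisons are caught inside the closures actually put into~$F$ — is the technical crux, and the step I would devote the most care to; everything else is bookkeeping with Theorem~\ref{T:ShapeLiftable}(ii).
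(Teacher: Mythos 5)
Your overall strategy (contrapositive via Theorem~\ref{T:ShapeLiftable}(ii), with the sets $S_n:=\sigma(P\dnw a_n)$ and the witnesses $\sigma(a_n)\in S_n\setminus S_{n+1}$) is sound, and your easy case $\card\bX\leq\lambda$ is correct. But the case you defer is not a loose end: it is the main case, since lifters of a nontrivial poset typically have $\kappa=\card\bX=\lambda^{+(\kur(P)-1)}>\lambda$ (cf.\ Corollary~\ref{C:Estimkur(P)}), and your sketched fix does not produce a usable~$F$. If the ``countable blocks'' are countable subsets $D\subseteq A$ and you close downward \emph{inside}~$D$, then $F(A)\subseteq A$ and the condition $F\sigma(P\dnw a)\cap\sigma(P\dnw b)\subseteq\sigma(P\dnw a)$ is vacuously satisfied, so nothing is ever caught; if the blocks are allowed to be arbitrary countable subsets of~$\kappa$, then either $\card F(A)$ explodes past~$\lambda$ or the blocks must be chosen with foreknowledge of~$\sigma$. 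The bound $\card F(A)\leq\card A+\card A^{\aleph_0}<\lambda$ is indeed where the hypothesis is spent, but it is worthless until an~$F$ is exhibited for which some contradiction (your ``descent'') still follows; that construction is the entire content of the theorem and is absent. Note also that the paper does not prove Theorem~\ref{T:WFstrCard} in the text; it cites Gillibert and Wehrung~\cite{Larder}.

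The missing device is not a well-ordering of~$\kappa$ at all. Fix, for every countable infinite $D\subseteq\kappa$, a representative~$R_D$ of its class modulo finite symmetric difference, and put $G(D):=R_D\setminus D$ (a \emph{finite}, non-isotone ``guess the deleted elements'' map); then define the isotone map $F(A):=A\cup\bigcup\setm{G(D)}{D\in[A]^{\les\aleph_0}}$, whose values have size at most $\card A+(\card A)^{\aleph_0}<\lambda$ precisely because $\card A<\cf(\lambda)$ --- this is the only place the cardinal hypothesis is used. Given a strictly decreasing sequence $\famm{a_n}{n<\omega}$, the tails $D_n:=\setm{\sigma(a_k)}{k>n}$ all lie in one class mod finite, so $\sigma(a_n)\in R_{D_n}\setminus D_n=G(D_n)\subseteq F(S_{n+1})$ for all large~$n$, while $\sigma(a_n)\in S_n\setminus S_{n+1}$, contradicting the free-set condition for the pair $a_{n+1}<a_n$. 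This ``closure over countable subsets of a non-isotone catching map'' is exactly what the paper is alluding to after Corollary~\ref{C:w+1lif} and Problem~\ref{Pb:SuperErd} (where isotonicity with \emph{countable} values cannot be bought this way, which is why that problem is open); your downward-closure mechanism cannot be patched into it, so as it stands the proposal has a genuine gap at its crux.
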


In particular, if a poset has a $(2^{\aleph_0})^+$-lifter, then it is well-founded. Hence, the poset $(\omega+1)^{\mathrm{op}}:=\set{0,1,2,\dots}\cup\set{\omega}$, with the total ordering defined by $0>1>2>\cdots>\omega$, has no $(2^{\aleph_0})^+$-lifter. It is also proved in Gillibert and Wehrung~\cite{Larder} that if an ill-founded (i.e., non well-founded) poset has a $\lambda$-lifter, then so does $(\omega+1)^{\mathrm{op}}$. By using Theorem~\ref{T:ShapeLiftable}, we obtain the following.

\begin{corollary}\label{C:w+1lif}
If there exists an ill-founded poset with an $\aleph_1$-lifter, then there exists an infinite cardinal~$\kappa$ such that for each \emph{isotone} $F\colon[\kappa]^{\les\aleph_0}\to[\kappa]^{\les\aleph_0}$, there exists a sequence $\famm{\kappa_n}{n<\omega}$ from~$\kappa$ such that $\kappa_n\notin F(\setm{\kappa_i}{i>n})$ for each $n<\omega$.
\end{corollary}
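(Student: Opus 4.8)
The plan is to chain together the two facts recalled immediately before the statement. First I would invoke the result of Gillibert and Wehrung~\cite{Larder} that, if some ill-founded poset has an $\aleph_1$-lifter, then so does the poset $P:=(\omega+1)^{\mathrm{op}}=\set{0,1,2,\dots}\cup\set{\omega}$ ordered by $0>1>2>\cdots>\omega$. Thus, granting the hypothesis, I fix an $\aleph_1$-lifter $(X,\bX)$ of~$P$ and put $\kappa:=\card\bX$. Feeding the constant map with value~$\es$ into Theorem~\ref{T:ShapeLiftable}(ii) already produces a one-to-one map from~$P$ into~$\kappa$, so $\kappa\geq\aleph_0$; this infinite cardinal~$\kappa$ will be the witness required by the corollary.

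Next, since $\aleph_1$ is regular we have $\cf(\aleph_1)=\aleph_1$, so $[\kappa]^{<\cf(\aleph_1)}=[\kappa]^{<\aleph_1}=[\kappa]^{{\les}\aleph_0}$, and likewise $[\kappa]^{<\aleph_1}=[\kappa]^{{\les}\aleph_0}$. So, given an isotone $F\colon[\kappa]^{{\les}\aleph_0}\to[\kappa]^{{\les}\aleph_0}$, Theorem~\ref{T:ShapeLiftable}(ii) applied with $\lambda:=\aleph_1$ yields a one-to-one map $\sigma\colon P\mono\kappa$ such that $F\sigma(P\dnw a)\cap\sigma(P\dnw b)\subseteq\sigma(P\dnw a)$ whenever $a<b$ in~$P$. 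I would then set $\kappa_n:=\sigma(n)$ for all $n<\omega$.

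It remains to read off the conclusion from the order of~$P$. One has $P\dnw n=\set{\omega}\cup\setm{k}{k\geq n}$ for each $n<\omega$, so that $n+1<n$ in~$P$ while $n\notin P\dnw(n+1)$. Fixing $n<\omega$ and applying the displayed inclusion with $a:=n+1$ and $b:=n$: the element $\kappa_n=\sigma(n)$ belongs to $\sigma(P\dnw n)$ but, $\sigma$ being one-to-one, not to $\sigma(P\dnw(n+1))$; hence $F\sigma(P\dnw(n+1))\cap\sigma(P\dnw n)\subseteq\sigma(P\dnw(n+1))$ forces $\kappa_n\notin F\sigma(P\dnw(n+1))$. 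Finally $\setm{\kappa_i}{i>n}\subseteq\sigma(P\dnw(n+1))$, so isotonicity of~$F$ gives $F(\setm{\kappa_i}{i>n})\subseteq F\sigma(P\dnw(n+1))$, whence $\kappa_n\notin F(\setm{\kappa_i}{i>n})$, as desired.

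The proof is essentially bookkeeping once the two inputs — the descent to $(\omega+1)^{\mathrm{op}}$ and Theorem~\ref{T:ShapeLiftable}(ii) — are on the table. The points that genuinely need attention are getting the direction of the ordering on $(\omega+1)^{\mathrm{op}}$ right (so that the relevant strict pair is $n+1<n$, not $n<n+1$), and the final appeal to isotonicity, which is needed to discard the spurious element $\sigma(\omega)\in\sigma(P\dnw(n+1))$ that is not of the form $\kappa_i$. The identification $[\kappa]^{{\les}\aleph_0}=[\kappa]^{<\cf(\aleph_1)}=[\kappa]^{<\aleph_1}$ is exactly why the choice $\lambda=\aleph_1$ (rather than a larger cardinal) is the one that yields this particular combinatorial conclusion.
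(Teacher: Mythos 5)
Your proposal is correct and follows exactly the route the paper intends: reduce to an $\aleph_1$-lifter of $(\omega+1)^{\mathrm{op}}$ via the quoted result from Gillibert and Wehrung, apply Theorem~\ref{T:ShapeLiftable}(ii) with $\lambda=\aleph_1$ (so that $[\kappa]^{<\cf(\lambda)}=[\kappa]^{<\lambda}=[\kappa]^{\les\aleph_0}$), and set $\kappa_n:=\sigma(n)$, using isotonicity of~$F$ to handle the extra element $\sigma(\omega)$. Nothing further is needed.
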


We do not know whether the final statement of Corollary~\ref{C:w+1lif} can occur at all (cf. Problem~\ref{Pb:SuperErd}). Actually, we do not know whether $(\omega+1)^{\mathrm{op}}$ can have a $\lambda$-lifter, for any infinite cardinal~$\lambda$. By Theorem~\ref{T:WFstrCard}, $\lambda:=\aleph_1$ is the first possible candidate.

\section{Critical points between varieties}\label{S:CritPt}

A good illustration of the various combinatorial principles involved  in Section~\ref{S:CLL} is provided by the theory of \emph{critical points} (cf. Definition~\ref{D:CritPt}). An upper bound for many critical points is provided by the following deep and difficult result, proved in Gillibert~\cite{GillTh,Gill1}.

\begin{theorem}\label{T:CritPtDich}
Let~$\cA$ and~$\cB$ be varieties of \emph{algebras}, with~$\cA$ locally finite and~$\cB$ finitely generated congruence-distributive. If $\Conc\cA\not\subseteq\Conc\cB$, then $\crit{\cA}{\cB}<\nobreak\aleph_\omega$.
\end{theorem}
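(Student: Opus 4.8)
The plan is to establish the contrapositive in the sharp form $\crit{\cA}{\cB}\le\aleph_d$ for a suitable finite number $d=d(\cB)$. So I would fix an arbitrary $\bS_0\in\Conc\cA$ and show $\bS_0\in\Conc\cB$, granting the standing assumption that every member of~$\Conc\cA$ of cardinality at most~$\aleph_d$ lies in~$\Conc\cB$. Write $\bS_0\cong\Conc\bA_0$ with $\bA_0\in\cA$. First I would invoke local finiteness of~$\cA$ to express~$\bA_0$ as the directed colimit of its finitely generated---hence \emph{finite}---subalgebras, indexed by the lower finite distributive lattice $P:=[A_0]^{<\omega}$ of finite subsets of~$A_0$ (each mapped to the subalgebra it generates), so that $\bA_0=\varinjlim_{p\in P}\bA_0^{(p)}$ with every~$\bA_0^{(p)}$ finite. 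Since~$\Conc$ preserves directed colimits (Lemma~\ref{L:ConcFunct}), this realizes $\bS_0=\varinjlim_{p\in P}\Conc\bA_0^{(p)}$ as a directed colimit, indexed by a lower finite lattice with zero, of \emph{finite} \jzs s. The whole task is then to lift this diagram, along a suitable cofinal reindexing of~$P$, through the functor~$\Conc$ on~$\cB$.

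Next I would distil the number~$d$ from the hypotheses on~$\cB$. Since~$\cB$ is finitely generated and congruence-distributive, J\'onsson's Lemma yields only finitely many subdirectly irreducible members up to isomorphism, all finite, say of cardinality at most~$N$. A finite algebra~$\bC$ with $\Conc\bC\cong\bT$ must then embed subdirectly into a power of those finitely many finite algebras, with exponent bounded in terms of~$\bT$ (the number of meet-irreducible congruences of~$\bC$), whence $\card\bC$ is bounded by a function of~$\bT$. Consequently \emph{every finite \jzs\ has, up to isomorphism, only finitely many liftings with respect to~$\Conc$ in~$\cB$, and each of them is finite}---a property which, via tame congruence theory, also follows from~$\cB$ omitting the types~$\mathbf1$ and~$\mathbf5$ (cf. Hobby and McKenzie~\cite{HoMK88}). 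I would then take~$d$ to be a finite number controlling simultaneously the ``complexity'' of these lifting diagrams and the order-dimension of a cofinal reindexing poset of~$P$ large enough to carry out the inductive lifting; this is the step where the hypothesis on~$\cB$ really enters, since without a finite residual bound the combinatorics could run through all~$\aleph_n$.

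Then I would run the Condensate Lifting Lemma of Gillibert and Wehrung~\cite{Larder}---or, in the original argument of~\cite{GillTh,Gill1}, its precursor the Armature Lemma. Through a L\"owenheim--Skolem reduction inside the relevant larder, I expect to rewrite $\bS_0=\varinjlim_{p\in P}\Conc\bA_0^{(p)}$ as a colimit along a cofinal subposet~$Q\subseteq P$ that is supported (even an \ajs\ with zero) and of order-dimension at most~$d$; such a lower finite poset carries a $\lambda$-lifter with $\lambda=\aleph_d$, which is in essence Kuratowski's Free Set Theorem---equivalently the relation $(\aleph_d,{<}\aleph_0)\leadsto Q$, read through Proposition~\ref{P:rightarr2leadsto}. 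By the standing assumption, $\Conc$ on~$\cB$ lifts every object whose value under~$\Conc$ has cardinality at most~$\aleph_d$ (in particular every condensate object arising in the application of CLL to~$Q$), and the remaining larder axioms for the pair of~$\Conc$-functors are routine preservation properties; CLL then returns a $Q$-indexed diagram $\famm{\bB_q}{q\in Q}$ in~$\cB$ with $\Conc\bB_q\cong\Conc\bA_0^{(q)}$ naturally in~$q$. Putting $\bB:=\varinjlim_{q\in Q}\bB_q\in\cB$ and using once more that~$\Conc$ preserves directed colimits and that~$Q$ is cofinal in~$P$, I obtain $\Conc\bB\cong\bS_0$, so $\bS_0\in\Conc\cB$. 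As $\bS_0$ was arbitrary, $\Conc\cA\not\subseteq\Conc\cB$ forces $\crit{\cA}{\cB}\le\aleph_d<\aleph_\omega$.

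I expect the main obstacle to be the implication ``$\Conc$ on~$\cB$ lifts objects $\Rightarrow$ $\Conc$ on~$\cB$ lifts \emph{this} diagram'': one must coherently coordinate along the index poset the finitely-many-finite-liftings data isolated in the second step---this is precisely the projectability and directed-colimit bookkeeping packaged in the notion of a larder---\emph{and} pin down the finite parameter~$d$ of~$\cB$ sharply enough that Kuratowski's Free Set Theorem delivers a lifter cardinal strictly below~$\aleph_\omega$. A secondary difficulty is that a naive reindexing of~$P$ inflates its cardinality; the remedy is exactly the L\"owenheim--Skolem-type clause in the definition of a larder, which allows thinning~$P$ to a cofinal subposet of controlled complexity without leaving the class of poset shapes that CLL can handle.
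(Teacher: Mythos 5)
Your plan founders at the step where you thin $P=[A_0]^{<\omega}$ to a cofinal index poset~$Q$ of order-dimension at most a fixed $d=d(\cB)$ and claim that~$Q$ carries an $\aleph_d$-lifter. A cofinal subposet of $[A_0]^{<\omega}$ must have cardinality at least $\card A_0$, and~$A_0$ is an arbitrary member of~$\cA$, so its size is unbounded as~$\bS_0$ ranges over $\Conc\cA$; on the other hand, the lower finite posets of bounded breadth (hence of bounded order-dimension) to which the lifter machinery of Theorem~\ref{T:CharLift} and Corollary~\ref{C:Estimkur(P)} applies are of bounded cardinality --- by Ditor's/Kuratowski's bound (cf. Section~\ref{S:Al1} and Theorem~\ref{T:ShapeLiftable}(ii)), a lower finite poset of breadth~$k$ has at most $\aleph_{k-1}$ elements. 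So no~$d$ fixed in advance can work once $\card A_0>\aleph_{d-1}$; indeed Corollary~\ref{C:Estimkur(P)} and Proposition~\ref{P:rightarr2leadsto} concern \emph{finite} posets (truncated cubes), and lifters of infinite posets are a far more delicate matter. Worse, even granting a $\lambda$-lifter $(X,\bX)$ for such a~$Q$, the hypothesis of CLL is that $\Conc$ on~$\cB$ lifts the relevant condensate \emph{objects}, whose $\Conc$-values have size comparable to $\card\bX\geq\card Q\geq\card A_0$; your standing assumption only provides liftings of objects of size at most~$\aleph_d$, so for large~$A_0$ the input to CLL is exactly the statement you are trying to prove. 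CLL converts object-lifting at the lifter's cardinal into diagram-lifting over a small shape; it cannot bootstrap lifting of small objects into lifting of arbitrarily large ones.

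Note also that the paper contains no proof of Theorem~\ref{T:CritPtDich}; it is a survey attributing the result to Gillibert~\cite{GillTh,Gill1}, and the argument there runs essentially in the opposite direction from yours. One assumes $\crit{\cA}{\cB}\geq\aleph_\omega$ and uses the Armature Lemma/condensate construction \emph{contrapositively on finite index posets}: if some diagram of finite members of~$\cA$, indexed by a finite cube-like poset, had a $\Conc$-image with no lifting in~$\cB$, a condensate would yield a member of $\Conc\cA$ of size some $\aleph_n$ (tied to the Kuratowski index of that finite poset) outside $\Conc\cB$, a contradiction. Then, to pass from ``every finite diagram lifts'' to ``every object lifts,'' one writes $\bA_0$ as the directed union of its finite subalgebras and uses precisely the consequence of J\'onsson's Lemma that you derived but then left idle --- each finite \jzs\ has only finitely many liftings in~$\cB$, all finite --- to run a compactness (inverse limit of nonempty finite sets) argument gluing coherent liftings of the finite stages. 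That is where local finiteness of~$\cA$ and the finiteness property of~$\cB$ actually do their work, and it is also why the conclusion is only $\crit{\cA}{\cB}<\aleph_\omega$ rather than a uniform $\aleph_d$; the sharper bound $\aleph_2$ is a later and substantially harder result (Gillibert~\cite{Gill4}, cf. also~\cite{Larder,HoMK88}), not something your reduction delivers.
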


This result is extended to quasivarieties (and relative congruence lattices) in Gillibert and Wehrung~\cite{Larder}. Moreover, by using the results of Tame Congruence Theory TCT (cf. Hobby and McKenzie~\cite{HoMK88}), it is shown in Gillibert and Wehrung~\cite{Larder} that Theorem~\ref{T:CritPtDich} can be extended to the case where~$\cB$ has finite signature, is finitely generated, and omits both TCT types~$\mathbf{1}$ and~$\mathbf{5}$. In particular, the latter condition holds in case~$\cB$ satisfies a nontrivial congruence lattice identity. The strongest refinement of Theorem~\ref{T:CritPtDich} to date, obtained in Gillibert~\cite{Gill4}, is for~$\cA$ and~$\cB$ both locally finite with $\cB$ ``strongly congruence-proper'' (which holds in case~$\cB$ omits both TCT types~$\mathbf{1}$ and~$\mathbf{5}$); furthermore, the conclusion is strengthened there to $\crit{\cA}{\cB}\leq\aleph_2$. The bound~$\aleph_2$ is sharp.

For lattice varieties, Theorem~\ref{T:CritPtDich} is refined further in Gillibert~\cite{Gill3}. The proof of Theorem~\ref{T:CritLatVar} is extremely involved, and it uses a precursor of CLL (cf. Section~\ref{S:CLL}) called the \emph{Armature Lemma}.

\begin{theorem}[Gillibert]\label{T:CritLatVar}
Let~$\cA$ and~$\cB$ be varieties of \emph{lattices} such that every simple member of~$\cB$ has a prime interval. If~$\cA$ is contained neither in~$\cB$ nor in the dual variety of~$\cB$, then $\Conc\cA\not\subseteq\Conc\cB$ and $\crit{\cA}{\cB}\leq\aleph_2$. The bound~$\aleph_2$ is sharp.
\end{theorem}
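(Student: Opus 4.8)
The plan is to obtain the upper bound $\crit{\cA}{\cB}\le\aleph_2$ by turning a \emph{diagram counterexample} for the functor~$\Conc$ into an \emph{object counterexample} via the Armature Lemma (equivalently, CLL), the object being a \emph{condensate} whose cardinality is controlled by a $\lambda$-lifter of the indexing poset; sharpness is then handled separately. \emph{Step~1 (a finite gap).} Lattice varieties are congruence-distributive, so J\'onsson's Lemma is available; and since $\Con L\cong\Con(L^{\mathrm{op}})$ for every lattice~$L$, the hypothesis ``$\cA$ lies neither in~$\cB$ nor in the dual variety~$\cB^{\mathrm{op}}$'' is precisely the necessary condition for $\Conc\cA\not\subseteq\Conc\cB$. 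From it one extracts finitely generated (hence countable) lattices $S_0\in\cA$ with $S_0\notin\cB$, and~$S_1\in\cA$ with $S_1^{\mathrm{op}}\notin\cB$; these, together with the way their compact congruence semilattices map onto the two-element semilattice~$\two$, are the combinatorial seed of the construction.

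\emph{Step~2 (the diagram counterexample).} Out of~$S_0$ and~$S_1$ one builds a diagram $\vec D=\famm{D_p,\delta_p^q}{p\le q\text{ in }P}$ of members of~$\cA$, indexed by a suitable \emph{``nice''} finite \ajs~$P$ of three-dimensional character — built, say, from a truncated cube such as~$B_3^=$ — such that the diagram $\Conc\vec D$ of finite Boolean semilattices admits \emph{no} lifting, with respect to~$\Conc$, by a $P$-indexed diagram in~$\cB$. This is the crux, and it is here that the hypothesis ``every simple member of~$\cB$ has a prime interval'' does its real work: it bounds, up to isomorphism, the liftings in~$\cB$ of any finite semilattice and confines the simple sections of members of~$\cB$ so tightly that, following the maps around~$P$, a putative lifting of~$\vec D$ is forced to reproduce~$S_0$ inside~$\cB$, or~$S_1^{\mathrm{op}}$ inside~$\cB$, contradicting Step~1. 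That such three-dimensional obstructions should be abundant is, in spirit, the content of Proposition~\ref{P:3Amalg}: three-dimensional amalgamation for~$\Conc$ essentially never holds.

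\emph{Steps~3 and~4 (larder and condensation).} Next one organizes the data into a projectable larder: base category the semilattices with \jzh s (plus an appropriate subcategory~$\cS^{\Rightarrow}$), ``left'' category the lattices of~$\cA$ with a suitable subcategory~$\cA^\dagger$ of small (e.g.\ finitely generated) members containing~$\vec D$, ``right'' category the lattices of~$\cB$ with a similar~$\cB^\dagger$, and $\Phi=\Psi=\Conc$. Preservation of directed colimits is Lemma~\ref{L:ConcFunct}; projectability comes from the behaviour of congruences under quotients; and the L\"owenheim--Skolem-type condition on~$\Psi$ is, once again, powered by the prime-interval hypothesis, through the bound on liftings of finite semilattices. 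Now~$P$ carries a $\lambda$-lifter $(X,\bX)$ for a cardinal~$\lambda$ with $(\card\bX,{<}\lambda)\leadsto P$; since~$P$ is (essentially) a truncated $3$-cube, Proposition~\ref{P:rightarr2leadsto} recasts this relation as an instance of the classical $(\kappa,r,\lambda)\rightarrow m$, whose Kuratowski-type arithmetic — the very arithmetic that bounds a $k$-ladder by~$\aleph_{k-1}$, here with $k=3$ — allows the choice $\card X\le\aleph_2$. The Armature Lemma, applied to~$(X,\bX)$ and~$\vec D$, then produces a lattice $L\in\cA$ with $\card L\le\aleph_2$ (the condensate of~$\vec D$) with the property that any $B\in\cB$ satisfying $\Conc B\cong\Conc L$ would, by projectability, give rise to a $\Conc$-lifting of~$\vec D$ in~$\cB$. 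By Step~2 there is none, so $\Conc L\notin\Conc\cB$; hence $\Conc\cA\not\subseteq\Conc\cB$ and $\crit{\cA}{\cB}\le\card\Conc L\le\aleph_2$.

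\emph{Step~5 (sharpness) and the main obstacle.} For sharpness one exhibits a single explicit pair of lattice varieties satisfying the hypotheses with critical point equal to~$\aleph_2$, building on the examples of Plo\v{s}\v{c}ica, refined by Gillibert: there ``$\le$'' is the theorem just proved, while the reverse inequality $\crit{\cA}{\cB}\ge\aleph_2$ is a \emph{largeness} statement — every distributive \jzs\ of cardinality $\le\aleph_1$ realized in~$\cA$ is realized in~$\cB$ — of exactly the kind established by the $2$-ladder method of Section~\ref{S:Al1} (a two-dimensional amalgamation property for~$\Conc$ on that~$\cB$, fed through a $2$-ladder of cardinality~$\aleph_1$, cf.\ Proposition~\ref{P:2laddAl1}). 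By far the hardest part is Step~2: engineering~$P$ and~$\vec D$ so that non-liftability in~$\cB$ is at once true and demonstrable is delicate, and it is tightly interwoven with the L\"owenheim--Skolem verification of Step~3 — together, this is what makes the proof, in the author's words, ``extremely involved''. Steps~1, 4, and~5, by contrast, are a routine application of congruence-variety theory, a black-box invocation of the Armature Lemma together with the lifter combinatorics of Section~\ref{S:Kurat}, and a rerun of the ladder technique already sketched in Sections~\ref{S:ExpleUA}--\ref{S:Al1}, respectively.
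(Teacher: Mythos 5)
Be aware that the survey you were asked to reproduce does not itself prove Theorem~\ref{T:CritLatVar}: it quotes the result from Gillibert~\cite{Gill3}, records only that the proof is ``extremely involved'' and uses the Armature Lemma, describes the general scheme (find a finite diagram in~$\cA$ whose $\Conc$-image admits no lifting in~$\cB$, then condense it into a single object of~$\cA$ \emph{via} a lifter of the index poset), and attributes sharpness to the earlier examples of Plo\v{s}\v{c}ica and Gillibert. Your Steps~1, 3, 4 and~5 reproduce that scheme faithfully, and your accounting of where~$\aleph_2$ comes from --- a $\lambda$-lifter of cardinality $\lambda^{+(\kur(P)-1)}$ with $\lambda=\aleph_0$ and $\kur(P)=3$, cf.\ Corollary~\ref{C:Estimkur(P)} --- is the right mechanism. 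Two slips of detail: it is lifters, not ladders, that drive the \emph{upper} bound (ladders, as in Section~\ref{S:Al1}, are the tool for the lower-bound half of sharpness), so identifying the two arithmetics is a conceptual conflation; and the survey describes the index poset of such diagram counterexamples as a finite \emph{lattice}, whereas $B_3^=$ has no top element.

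The genuine gap is Step~2, which is exactly the part the paper calls extremely involved and which you leave as an assertion; worse, the mechanism you propose for it is not available at this level of generality. The hypothesis that every simple member of~$\cB$ has a prime interval does \emph{not} bound, up to isomorphism, the liftings in~$\cB$ of a finite \jzs: here~$\cB$ is an arbitrary lattice variety, so it may contain infinite simple members with prime intervals (subspace lattices of infinite projective spaces in the modular case, for instance), and already the two-element semilattice has a proper class of pairwise nonisomorphic liftings in such a~$\cB$. The ``only finitely many liftings, all finite'' condition belongs to the locally finite / strongly congruence-proper refinements (Gillibert~\cite{Gill4}, cf.\ the list in Section~\ref{S:CLL} and Theorem~\ref{T:CritPtDich}), not to Theorem~\ref{T:CritLatVar}, and your L\"owenheim--Skolem verification in Step~3, which you also power by that finiteness claim, is therefore unsupported as well. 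As written, your text is a plausible roadmap matching the survey's sketch, but the construction of the non-liftable diagram, the proof of its non-liftability in~$\cB$ from the prime-interval hypothesis, and the larder verification --- i.e., the actual content of Gillibert's proof in~\cite{Gill3} --- are missing, so the argument does not yet establish either $\Conc\cA\not\subseteq\Conc\cB$ or $\crit{\cA}{\cB}\leq\aleph_2$.
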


It is not known whether the assumption about simple members of~$\cB$ can be removed from the assumptions of Theorem~\ref{T:CritLatVar}.

In Theorem~\ref{T:CritLatVar}, the optimality statement on the bound~$\aleph_2$ follows from earlier results of Plo\v s\v cica~\cite{Plos00, Plos03,Plos04} and Gillibert~\cite{GillTh,Gill1}: Plo\v s\v cica obtains examples with $\crit{\cA}{\cB}=\aleph_0$ and other examples with $\crit{\cA}{\cB}=\aleph_2$, while Gillibert obtains an example with $\crit{\cA}{\cB}=\aleph_1$. In all those examples, $\cA$ and~$\cB$ can be taken \emph{finitely generated modular} lattice varieties and $\cA\supset\cB$. Estimates for further critical points, between more complicated modular lattice varieties, are given in Gillibert~\cite{Gill2}.

Establishing an estimate of the form $\crit{\cA}{\cB}>\aleph_0$ requires an analogue of Proposition~\ref{P:Con1toCtble}, in particular a direct limit along the chain~$\omega$ of all natural numbers. Likewise, establishing an estimate of the form $\crit{\cA}{\cB}>\aleph_1$ requires an analogue of Proposition~\ref{P:Con2toAl1}, in particular a direct limit along a $2$-ladder of cardinality~$\aleph_1$. On the other hand, proving an estimate of the form $\crit{\cA}{\cB}\leq\aleph_m$ requires different techniques, the combinatorial aspects of which we shall outline in Section~\ref{S:Kurat}. In many proofs of such estimates, one first finds a finite diagram~$\vec\bA$ in~$\cA$, indexed by a finite lattice~$P$, such that $\Conc\vec\bA$ is never isomorphic to~$\Conc\vec\bB$ for any $P$-indexed diagram~$\vec\bB$ in~$\cB$; then, using CLL, this diagram counterexample is turned to an object counterexample in~$\cA$.

Before going back, in Section~\ref{S:Kurat}, to the combinatorial aspects of such arguments, we shall see in Section~\ref{S:BowTie} that certain problems may have diagram counterexamples but no known object counterexamples.

\section{A diagram counterexample without any known associated object counterexample}\label{S:BowTie}

The Congruence Lattice Problem (CLP), raised by R.\,P. Dilworth in the forties, asked whether every distributive algebraic lattice is isomorphic to the congruence lattice of a lattice; equivalently, whether every distributive \jzs\ is isomorphic to~$\Conc\bL$ for a lattice~$\bL$. This problem got finally settled in the negative in Wehrung~\cite{CLP}; the sharp bound for the size of the counterexample (viz. $\aleph_2$) was obtained in R\r{u}\v{z}i\v{c}ka~\cite{Ruzi08}. For an account of the problem, see Gr\"atzer~\cite{Grat07}.

Before CLP got settled, Pudl\'ak~\cite{Pudl} asked whether CLP could have a positive, \emph{functorial} solution, that is, a functor~$\Gamma$, from distributive \jzs s and \jzh s to lattices and lattice homomorphisms, such that $\Conc\Gamma(\bS)\cong\bS$ naturally in~$\bS$ for any distributive \jzs\ $\bS$. This problem got solved in the negative (before CLP got settled) in T\r{u}ma and Wehrung~\cite{Bowtie}. While the first version of that negative answer was obtained for congruence lattices of lattices, it got soon extended to a much wider class of structures by using deep results in commutator theory by Kearnes and Szendrei~\cite{KeSz98}. The result can be stated as follows.

\begin{theorem}\label{T:Bowtie}
There exists a diagram~$\cD_{\bowtie}$, indexed by a finite poset, of finite Boolean semilattices and \jzue s, which cannot be lifted, with respect to the~$\Conc$ functor, in any variety satisfying a nontrivial congruence lattice identity.
\end{theorem}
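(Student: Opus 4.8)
The plan is to construct $\cD_\bowtie$ explicitly and then rule out liftings by a commutator-theoretic rigidity argument; the combinatorics is engineered so that a hypothetical lifting would force a finite algebra to be ``coordinatized'' in two incompatible ways.

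\emph{Construction of the diagram.} First I would fix a suitable finite indexing poset $\cP$ — the one whose Hasse diagram, read together with the maps below, accounts for the name ``bowtie'': it has a distinguished central vertex $c$ lying strictly below two upper vertices $u_0,u_1$ and strictly above two lower vertices, with no further comparabilities among those four. To each vertex of $\cP$ I would attach a finite Boolean semilattice — small exponents (copies of $\two$) at the lower vertices, a copy of $\two^{2}$ or $\two^{3}$ at $c$, larger Boolean semilattices at $u_0$ and $u_1$ — and to each edge a $\jzue$. The crucial design choice is that the two embeddings out of $c$, namely $(\cD_\bowtie)$ applied to $c\leq u_0$ and to $c\leq u_1$, induce on the atoms of the central semilattice two \emph{crossed} partitions into ``coordinate blocks'': each embedding is by itself as innocuous as a coordinate inclusion of Boolean semilattices, yet no single partition of the central atom set simultaneously refines both. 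Checking that the resulting $\cD_\bowtie$ is a genuine diagram of finite Boolean semilattices and $\jzue$s is routine.

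\emph{Reduction to the structure of the putative lifting.} Suppose toward a contradiction that $\vec\bL=\famm{\bL_p,\ell_p^q}{p\leq q\text{ in }\cP}$ is a diagram in a variety $\cV$ satisfying a nontrivial congruence lattice identity, with isomorphisms $\Conc\bL_p\cong(\cD_\bowtie)_p$ intertwining each $\Conc\ell_p^q$ with the corresponding transition map of $\cD_\bowtie$. Then every $\Con\bL_p$ is a finite Boolean lattice. Now I would invoke the commutator theory available in varieties satisfying a nontrivial congruence identity (Freese--McKenzie commutator theory, together with the comparison of commutators of Kearnes and Szendrei~\cite{KeSz98}): such a variety has a weak difference term, its term and modular commutators coincide and are well-behaved, abelian congruences are affine-like, and solvable prime quotients are thin. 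From this one extracts the key rigidity statement: a finite algebra in $\cV$ whose congruence lattice is Boolean is, up to the commutator, a skew-free subdirect product indexed by the atoms of its congruence lattice, and every homomorphism between two such algebras carries coordinates to coordinates, with the induced correspondence of atom sets recoverable from the $\Conc$-map. Thus $\vec\bL$ carries a canonical coordinate system at every vertex, and all the $\ell_p^q$ respect these systems.

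\emph{Deriving the contradiction, and the main obstacle.} Next I would propagate the coordinate systems up the bowtie. Because the isomorphisms $\Conc\bL_p\cong(\cD_\bowtie)_p$ intertwine the maps, the atom-to-atom correspondences carried by $\ell_c^{u_0}$ and $\ell_c^{u_1}$ are exactly the two crossed correspondences built into $\cD_\bowtie$ in the first step; but then the \emph{single} coordinate system of $\bL_c$ would have to refine both crossed partitions of its atom set at once — equivalently, two atoms of $\Con\bL_c$ that the data keep distinct would be forced to coincide, or $\bL_c$ would have to be a skew-free product in two incompatible ways — which is impossible. (Closer to the original argument, $\Conc\bL_c$ would have to satisfy a weak uniform refinement property that algebras in $\cV$ do satisfy but that the bowtie configuration explicitly violates.) Hence $\cD_\bowtie$ admits no lifting in $\cV$, and since every variety with a nontrivial congruence lattice identity falls under the commutator-theoretic hypotheses just used, $\cD_\bowtie$ is liftable, with respect to $\Conc$, in no such variety. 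The hard part is the rigidity statement in the reduction step: squeezing out of a bare ``nontrivial congruence identity'' enough commutator theory to force the coordinatization of finite algebras with Boolean congruence lattice and the block-diagonal shape of homomorphisms between them — in a congruence-\emph{modular} variety this is merely uniqueness of direct decompositions, but in general one must run the Kearnes--Szendrei analysis and carefully tame the non-abelian-but-solvable behaviour, which is exactly where a naive direct-decomposition argument breaks down. A second, lighter, difficulty is the finite combinatorial engineering of $\cD_\bowtie$ so that the crossing genuinely obstructs a common refinement while every individual map remains a harmless Boolean embedding; this has to be checked directly.
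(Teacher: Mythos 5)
The paper does not actually reprove Theorem~\ref{T:Bowtie}: it quotes it from T\r{u}ma and Wehrung \cite{Bowtie}, the extension to all varieties satisfying a nontrivial congruence lattice identity resting on the commutator results of Kearnes and Szendrei \cite{KeSz98}; but Section~\ref{S:BowTie} does specify the diagram exactly, and your construction diverges from it in a way that is fatal rather than cosmetic. Your indexing poset is a five-element bowtie whose two maximal vertices $u_0,u_1$ have no common upper bound, so no commutativity constraint ever forces the two correspondences induced by $\ell_c^{u_0}$ and $\ell_c^{u_1}$ to be reconciled; the step ``the single coordinate system of $\bL_c$ would have to refine both crossed partitions at once'' is not implied by anything in the data. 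Worse, no choice of finite Boolean semilattices and \jzue s on a poset of that shape can prove the theorem: using the one-dimensional congruence amalgamation available for lattices at finite distributive targets (Gr\"atzer, Lakser, and Wehrung \cite{GLW}; this is exactly the property recalled in Section~\ref{S:ExpleUA}), one can lift any such diagram in the variety of all lattices --- embed the two bottom algebras into their direct product via $x\mapsto(x,e)$, amalgamate once to create the central algebra, then once more for each maximal vertex --- and lattices satisfy the nontrivial (distributive) congruence identity. The actual $\cD_{\bowtie}$ has eight vertices: $\Pow(1)$ at the bottom, three copies of $\Pow(2)$ each mapped into all three copies of $\Pow(3)$ by the maps $\xf_0,\xf_1,\xf_2$, and a top $\Pow(4)$ reached by $\xu_0,\xu_1,\xu_2$; the top vertex and the complete bipartite middle level are precisely what create the constraints that the congruence identity cannot meet.

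The deeper gap is the rigidity lemma you yourself flag as the hard part: it is unproven, it is applied outside its stated scope, and it is false already in the first case the theorem must cover. A lifting only makes each $\Con\bL_p$ a finite Boolean lattice; the algebras $\bL_p$ may be infinite, so a statement about finite algebras would not apply to them. Moreover, even for finite members of a congruence-distributive variety the conclusion you need fails: the three-element chain is a lattice whose congruence lattice is $\two\times\two$, yet it is not a direct (or in any useful sense coordinatized) product of simple lattices, and homomorphisms between lattices with Boolean congruence lattices carry no coordinate structure recoverable from their $\Conc$-maps; so the coordinatization mechanism breaks precisely for lattices, which is the classical case of T\r{u}ma and Wehrung's theorem. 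Your parenthetical points in the right direction --- the actual argument extracts from the nontrivial congruence identity, via \cite{KeSz98}, a refinement-type commutator consequence and confronts it with the specific maps $\xe,\xf_i,\xu_i$ --- but such a property cannot be violated by the single semilattice $\Conc\bL_c$, since every finite Boolean semilattice is liftable as an object even by finite lattices; the contradiction has to be extracted from the whole diagram, and that diagram chase is exactly the content still missing from your argument.
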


In particular, the diagram~$\cD_{\bowtie}$ cannot be lifted, with respect to the~$\Conc$ functor, by \emph{lattices}, \emph{majority algebras}, \emph{groups}, \emph{loops}, \emph{modules}, and so on. On the other hand, Lampe~\cite{Lamp82} proved that every \jzus\ (distributive or not) is isomorphic to~$\Conc\bG$ for some \emph{groupoid}~$\bG$ (a \emph{groupoid} in universal algebra is just a nonempty set with a binary operation). By Gillibert~\cite{GillTh,Gill1}, this result can be extended to any diagram of \jzus s and \jzuh s indexed by a finite poset, and in particular to~$\cD_{\bowtie}$: that is, \emph{$\cD_{\bowtie}$ can be lifted by a diagram of groupoids}.

We shall now describe this diagram. Consider the \jzuh s $\xe\colon\Pow(1)\to\Pow(2)$,
$\xf_i\colon\Pow(2)\to\Pow(3)$, and $\xu_i\colon\Pow(3)\to\Pow(4)$ (for $i<3$), determined by their values on the atoms of their respective domains:
 \begin{gather*}
 \xe\colon\set{0}\mapsto\set{0,1};\\
 \xf_0\colon
 \begin{cases}
 \set{0}&\mapsto\set{0,1}\\
 \set{1}&\mapsto\set{0,2}
 \end{cases},\qquad
 \xf_1\colon
 \begin{cases}
 \set{0}&\mapsto\set{0,1}\\
 \set{1}&\mapsto\set{1,2}
 \end{cases},\qquad
 \xf_2\colon
 \begin{cases}
 \set{0}&\mapsto\set{0,2}\\
 \set{1}&\mapsto\set{1,2}
 \end{cases},\\
 \xu_0\colon\begin{cases}
 \set{0}&\mapsto\set{0}\\
 \set{1}&\mapsto\set{1,3}\\
 \set{2}&\mapsto\set{2,3}\\
 \end{cases},\qquad
 \xu_1\colon\begin{cases}
 \set{0}&\mapsto\set{0,3}\\
 \set{1}&\mapsto\set{1}\\
 \set{2}&\mapsto\set{2,3}\\
 \end{cases},\qquad
 \xu_2\colon\begin{cases}
 \set{0}&\mapsto\set{0,3}\\
 \set{1}&\mapsto\set{1,3}\\
 \set{2}&\mapsto\set{2}\\
 \end{cases}.
 \end{gather*}

The diagram~$\cD_{\bowtie}$ is represented on Figure~\ref{Fig:BowTie4}.

\begin{figure}[htb]
 \[
 {
 \def\labelstyle{\displaystyle}
 \xymatrixrowsep{2pc}\xymatrixcolsep{1.5pc}
 \xymatrix{ & \Pow(4) & \\
 \Pow(3)\ar[ru]^{\xu_0} & \Pow(3)\ar[u]|-(.45){\strut\xu_1} &
 \Pow(3)\ar[lu]_{\xu_2}\\
 & &\\
 \Pow(2)\ar[uu]^{\xf_0}\ar[ruu]^(.35){\xf_0}\ar[rruu]|-(.15){\xf_0}&
 \Pow(2)\ar[luu]|-(.7){\strut\xf_1}\ar[uu]|-(.7){\strut\xf_1}
 \ar[ruu]|-(.7){\strut\xf_1}
 &
 \Pow(2)\ar[lluu]|-(.15){\xf_2}\ar[luu]_(.35){\xf_2}\ar[uu]_{\xf_2}\\
 &\Pow(1)\ar[lu]^{\xe}\ar[u]|-(.45){\strut\xe}\ar[ru]_{\xe} &
 }}
 \]
\caption{The diagram $\cD_{\bowtie}$}\label{Fig:BowTie4}
\end{figure}

\begin{figure}[htb]
\includegraphics{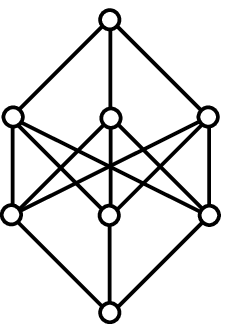}
\caption{The underlying poset of $\cD_{\bowtie}$}
\label{Fig:K33Pos}
\end{figure}

The underlying poset of~$\cD_{\bowtie}$ is represented on Figure~\ref{Fig:K33Pos}. It does not have any lifter (because it is \emph{not an \ajs}). Thus CLL is not sufficient to turn it into an object counterexample. In particular, it is still unknown whether every distributive \jzs\ is isomorphic to $\Conc\bM$ for some \emph{majority algebra}~$\bM$. (A \emph{majority algebra} is a nonempty set~$M$, endowed with a ternary operation~$m$, such that $m(x,x,y)=m(x,y,x)=m(y,x,x)=x$ for all $x,y\in M$. It is well-known that the congruence lattice of a majority algebra is distributive.) Therefore, the representation problem of distributive \jzs s as compact congruence semilattices of majority algebras has a diagram counterexample (namely~$\cD_{\bowtie}$) but no known object counterexample. It is very difficult to prove that the containment of the congruence class of all lattices into the one of all majority algebras is \emph{proper}; this is done in Plo\v{s}\v{c}ica~\cite{Plos08}.

\section{The Kuratowski index of a finite poset}\label{S:Kurat}

The complexity of the definition of a lifter (Definition~\ref{D:Lifter}) makes its verification quite unpractical, even for easily described finite posets. The following much more user-friendly variant is introduced in Gillibert and Wehrung~\cite{GiWe1}.

\begin{notation}\label{Not:leadsto}
For infinite cardinals $\kappa$, $\lambda$ and a poset~$P$, let $(\kappa,{<}\lambda)\leadsto P$ hold if for every mapping $F\colon\Pow(\kappa)\to[\kappa]^{<\lambda}$ there exists a one-to-one map $\sigma\colon P\mono\kappa$ such that
 \[
 (\forall x<y\text{ in }P)
 \bigl(F\sigma(P\dnw x)\cap\sigma(P\dnw y)\subseteq\sigma(P\dnw x)\bigr)\,.
 \]
\end{notation}

As proved in Gillibert and Wehrung~\cite{GiWe1}, if~$P$ is \emph{lower finite}, then it is sufficient to replace $P\dnw z$ by $\J(P)\dnw z$ in the statement above. This makes the verification of the statement $(\kappa,{<}\lambda)\leadsto P$ much more convenient on given finite posets.

The following result from Gillibert and Wehrung~\cite{Larder} tells exactly when a small enough poset has a lifter (cf. Definition~\ref{D:Lifter}), and gives an estimate, in terms of the $(\kappa,{<}\lambda)\leadsto P$ notation, of the size of such a lifter.

\begin{theorem}\label{T:CharLift}
Let~$\lambda$ and~$\kappa$ be infinite cardinals and let~$P$ be a lower finite poset in which every element has less than~$\cf(\lambda)$ upper covers. Then the following are equivalent:
\begin{enumerate}
\item $P$ has a $\lambda $-lifter $(X,\bX)$ such that~$\bX$ consists of all principal ideals of~$X$ and $\card X=\kappa$, while~$X$ is a lower finite \ajs.

\item $P$ has a $\lambda$-lifter $(X,\bX)$ such that $\card\bX=\kappa$.

\item $P$ is a finite disjoint union of \ajs s with zero, and $(\kappa,{<}\lambda)\leadsto\nobreak P$.
\end{enumerate}
\end{theorem}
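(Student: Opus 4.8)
The plan is to establish the cyclic chain of implications (i)$\Rightarrow$(ii)$\Rightarrow$(iii)$\Rightarrow$(i). The implication (i)$\Rightarrow$(ii) is immediate: if $\bX$ is the set of \emph{all} principal ideals of a poset $X$ with $\card X=\kappa$, then, since distinct elements of a poset have distinct principal ideals, $\card\bX=\card X=\kappa$, so the weaker datum of~(ii) is at hand. The real content lies in the two remaining arrows, and only the last one requires genuinely new work.

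For (ii)$\Rightarrow$(iii) I would feed the given $\lambda$-lifter $(X,\bX)$, with $\kappa=\card\bX$, into Theorem~\ref{T:ShapeLiftable}. Part~(i) of that theorem yields at once that $P$ is a finite disjoint union of \ajs s with zero, so it remains to extract $(\kappa,{<}\lambda)\leadsto P$ from part~(ii). The only subtlety is that Notation~\ref{Not:leadsto} quantifies over \emph{arbitrary} maps $F\colon\Pow(\kappa)\to[\kappa]^{<\lambda}$, whereas Theorem~\ref{T:ShapeLiftable}(ii) provides a witnessing $\sigma$ only for \emph{isotone} maps defined on $[\kappa]^{<\cf(\lambda)}$. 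Lower finiteness of $P$ bridges this: given an arbitrary $F$, restrict it to $G:=F\res[\kappa]^{<\aleph_0}$ and pass to the isotone ``downward union'' $\widehat{G}(A):=\bigcup\setm{G(B)}{B\in[A]^{<\aleph_0}}$ for $A\in[\kappa]^{<\cf(\lambda)}$; a routine cardinal count (a union of fewer than $\cf(\lambda)$ sets each of size $<\lambda$ still has size $<\lambda$) shows $\widehat{G}$ maps into $[\kappa]^{<\lambda}$, and clearly $\widehat{G}(A)\supseteq F(A)$ for finite $A$. Applying Theorem~\ref{T:ShapeLiftable}(ii) to $\widehat{G}$ produces a one-to-one $\sigma\colon P\mono\kappa$, and since every $P\dnw x$ is finite we get $F\sigma(P\dnw x)=G\sigma(P\dnw x)\subseteq\widehat{G}\sigma(P\dnw x)$, whence $F\sigma(P\dnw x)\cap\sigma(P\dnw y)\subseteq\sigma(P\dnw x)$ for all $x<y$ in $P$. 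As $F$ was arbitrary, $(\kappa,{<}\lambda)\leadsto P$.

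The substantial direction is (iii)$\Rightarrow$(i): here one must actually construct a lifter from the combinatorial datum, so one has to unwind Definition~\ref{D:Lifter}. I would take $X$ to be an explicit lower finite \ajs with zero of cardinality $\kappa$ — the natural candidate is $[\kappa]^{<\aleph_0}$ ordered by inclusion (lower finite since $X\dnw A=\Pow(A)$, an \ajs since each $\Pow(A)$ is a finite Boolean \jzs\ and every finite family has a least upper bound), possibly decorated to carry the ``$P$-scaling'' required by the definition — and let $\bX$ be its set of principal ideals, so $\card X=\card\bX=\kappa$. The heart of ``$\lambda$-lifter'' is a freeness clause asserting that every obstruction (a map attaching a $\lambda$-small set to each element of $X$, equivalently, via $\bX\cong X\cong[\kappa]^{<\aleph_0}\subseteq\Pow(\kappa)$, a map $F\colon\Pow(\kappa)\to[\kappa]^{<\lambda}$) admits a \emph{free lifting}: an isotone, scaling-compatible map $P\to X$ avoiding the obstruction. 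Given such an $F$, the relation $(\kappa,{<}\lambda)\leadsto P$ hands us a one-to-one $\sigma\colon P\mono\kappa$, and I would define the lifting by $p\mapsto\sigma(P\dnw p)$: this is a \emph{finite} subset of $\kappa$ (lower finiteness of $P$ is essential here), hence an element of $X$; the assignment is isotone because $p\le q$ forces $P\dnw p\subseteq P\dnw q$; and the defining inequality $F\sigma(P\dnw x)\cap\sigma(P\dnw y)\subseteq\sigma(P\dnw x)$ is precisely the statement that this lifting avoids $F$. The hypothesis that each element of $P$ has fewer than $\cf(\lambda)$ upper covers keeps the $\lambda$-smallness bookkeeping in Definition~\ref{D:Lifter} within range (and legitimizes replacing $P\dnw z$ by $\J(P)\dnw z$ throughout, per the remark after Notation~\ref{Not:leadsto}), while the hypothesis that $P$ is a finite disjoint union of \ajs s with zero is exactly the shape constraint carried by the notion of a lifter and governs how the components of $P$ are threaded through $X$.

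The main obstacle is this last direction, and within it the difficulty is organizational rather than combinatorial: the relation $\leadsto$ does all the real lifting, and $X$ is concrete, but one must faithfully match the several clauses of Definition~\ref{D:Lifter} — the $P$-scaled Boolean structure on $X$, the directedness and closure conditions on $\bX$, the L\"owenheim–Skolem-type smallness, and above all the freeness clause — against the single bare inequality of Notation~\ref{Not:leadsto}, tracking throughout both the principal-ideal correspondence and the point that $p\mapsto\sigma(P\dnw p)$ lands in $X$ only because $P$ is lower finite.
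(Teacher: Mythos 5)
This survey only quotes Theorem~\ref{T:CharLift} from Gillibert and Wehrung's \emph{larder} paper and contains no proof of it, so what follows is an assessment of your argument on its own merits. Your (i)$\Rightarrow$(ii) is indeed immediate, and your (ii)$\Rightarrow$(iii) is correct and complete as sketched: Theorem~\ref{T:ShapeLiftable}(i) gives the decomposition of $P$, and your passage from an arbitrary $F\colon\Pow(\kappa)\to[\kappa]^{<\lambda}$ to the isotone $\widehat{G}(A)=\bigcup\setm{F(B)}{B\in[A]^{<\aleph_0}}$ is sound (the count $\card[A]^{<\aleph_0}<\cf(\lambda)$ keeps values in $[\kappa]^{<\lambda}$, and lower finiteness gives $F\sigma(P\dnw a)\subseteq\widehat{G}\sigma(P\dnw a)$, which is all you need).

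The genuine gap is (iii)$\Rightarrow$(i), the only direction with real content, and it is precisely at the point you wave at with ``possibly decorated to carry the $P$-scaling''. A $\lambda$-lifter is built on a \emph{norm-covering}: $X$ comes with a fixed isotone map $\partial\colon X\to P$, and clause~(ii) of Definition~\ref{D:Lifter} requires, for every $S$, an isotone \emph{section} $\sigma$ of $\partial$, i.e.\ $\partial\sigma(p)=p$. For your candidate $X=[\kappa]^{<\aleph_0}$ no admissible $\partial$ is exhibited, and none can exist for your proposed section: $\partial$ must be fixed once and for all, before $S$ (hence before the injection $\sigma_0\colon P\mono\kappa$ supplied by $(\kappa,{<}\lambda)\leadsto P$) is known, whereas $p\mapsto$ (principal ideal generated by $\sigma_0(P\dnw p)$) is a section only if $\partial\bigl(\sigma_0(P\dnw p)\bigr)=p$ for all $p$ --- and the relation $\leadsto$ gives no control over which finite subsets of $\kappa$ occur as $\sigma_0(P\dnw p)$; the same finite set can perfectly well arise as $\sigma_0(P\dnw p)$ for one obstruction and as $\sigma_0'(P\dnw q)$, $q\neq p$, for another. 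So the underlying poset must genuinely encode $P$ (its elements carrying both a norm in $P$ and a finite fragment of $\kappa$), and once it does, the remaining work is not merely organizational: one must verify that this $X$ is a lower finite \ajs\ of cardinality $\kappa$, that clause~(i) of Definition~\ref{D:Lifter} holds, that $X$ is supported when $\lambda=\aleph_0$ (a clause you never address), and --- the delicate step --- convert an arbitrary $S\colon\bX^=\to[X]^{<\lambda}$ into a \emph{single} $F\colon\Pow(\kappa)\to[\kappa]^{<\lambda}$, where the naive union over all norms $a\in P$ compatible with a given finite set runs over possibly $\card P$-many (up to $\kappa$-many) terms and need not have size $<\lambda$; this is where the hypothesis that every element of $P$ has fewer than $\cf(\lambda)$ upper covers really enters. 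Two smaller points: the reduction from $P\dnw z$ to $\J(P)\dnw z$ is attributed in the paper to lower finiteness alone, not to the upper-cover hypothesis; and (iii) only makes $P$ a finite \emph{disjoint union} of \ajs s with zero, so the construction must also thread the several components, which your sketch ignores.
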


The following definition and result can be found in Gillibert and Wehrung~\cite{GiWe1}.

\begin{definition}\label{D:KurInd}
The \emph{Kuratowski index} of a finite poset~$P$, denoted by $\kur(P)$, is defined as~$0$ if~$P$ is pairwise incomparable, and, otherwise, as the least $n>0$ if it exists such that $(\lambda^{+(n-1)},{<}\lambda)\leadsto P$ for each infinite cardinal~$\lambda$.
\end{definition}

\begin{proposition}\label{P:kur(P)exists}
The Kuratowski index of a finite poset~$P$ always exists, and $\kur(P)\leq\dim(P)\leq\wdt\J(P)\leq\card\J(P)$. Furthermore, if~$P$ is a \js, then $\brd(P)\leq\kur(P)$.
\end{proposition}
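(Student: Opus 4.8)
We treat the four assertions in turn. The inequality $\wdt\J(P)\le\card\J(P)$ is immediate, since every antichain of~$\J(P)$ is a subset of~$\J(P)$. For $\dim(P)\le\wdt\J(P)$, the key point is that in a \emph{finite} poset every element~$a$ satisfies $a=\bigvee(\J(P)\dnw a)$; this follows by induction on the finite down-set~$P\dnw a$ (if~$a$ is \jirr, or minimal but not the least element, then~$a$ is the largest element of $\J(P)\dnw a$; if $a=0_P$ then $\J(P)\dnw a=\es$ and $\bigvee\es=a$; otherwise write $a=\bigvee Y$ with $a\notin Y$, apply the induction hypothesis to each $y\in Y$, and note that an element of~$P$ bounds $\J(P)\dnw a$ from above iff it bounds $\bigcup_{y\in Y}\J(P)\dnw y$, iff it lies above~$a$). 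Hence $a\mapsto\J(P)\dnw a$ is an order-embedding of~$P$ into the distributive lattice~$\cO$ of all down-sets of~$\J(P)$, so $\dim(P)\le\dim\cO$; and a Dilworth chain decomposition $\J(P)=C_1\sqcup\cdots\sqcup C_w$ with $w=\wdt\J(P)$ yields the order-embedding $I\mapsto(I\cap C_1,\dots,I\cap C_w)$ of~$\cO$ into a product of~$w$ finite chains, whence $\dim\cO\le w$.

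The main point is $\kur(P)\le\dim(P)$, which also settles the existence of $\kur(P)$. If~$P$ is pairwise incomparable there is nothing to do; otherwise put $d:=\dim(P)$, and we must check $(\lambda^{+(d-1)},{<}\lambda)\leadsto P$ for every infinite cardinal~$\lambda$. Fix an order-embedding of~$P$ into a product $C_1\times\cdots\times C_d$ of~$d$ finite chains, and let $F\colon\Pow(\lambda^{+(d-1)})\to[\lambda^{+(d-1)}]^{<\lambda}$ be given. One builds the required one-to-one map $\sigma\colon P\mono\lambda^{+(d-1)}$ by an inductive argument that plays the~$d$ coordinates of the product against the successive cardinal levels $\lambda\le\lambda^+\le\cdots\le\lambda^{+(d-1)}$, at each step extracting a free set of Kuratowski type in order to absorb the values of~$F$; here one uses that, $P$ being lower finite, the defining condition of $(\lambda^{+(d-1)},{<}\lambda)\leadsto P$ need only be verified with $\J(P)\dnw z$ in place of $P\dnw z$. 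I expect this to be the technical heart of the Proposition and the principal obstacle: organizing the bookkeeping so that the $d$-fold product structure of~$P$ meshes with the $d$-step cardinal induction is, in essence, a relativized and iterated form of the proof of Kuratowski's Free Set Theorem~\cite{Kura51} — indeed, when $P$ is a truncated cube this reduces exactly to that theorem.

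Finally, $\brd(P)\le\kur(P)$ for~$P$ a \js. Write $b:=\brd(P)$ and assume $b\ge1$. By definition of breadth there are $x_0,\dots,x_{b-1}\in P$ with $x_j\nleq\bigvee_{i\ne j}x_i$ for each $j<b$, so the assignment $S\mapsto\bigvee_{i\in S}x_i$ on the nonempty subsets of $\set{0,\dots,b-1}$ exhibits a truncated cube inside~$P$. Using the translation between $(\kappa,{<}\lambda)\leadsto(-)$ on truncated cubes and the classical relation $(\kappa,b-1,\lambda)\rightarrow b$ (Notation~\ref{Not:arrRel}, Proposition~\ref{P:rightarr2leadsto}), the sharpness of Kuratowski's Free Set Theorem — namely $\neg\bigl((\lambda^{+(b-2)},b-1,\lambda)\rightarrow b\bigr)$ — supplies a map $\Pow(\lambda^{+(b-2)})\to[\lambda^{+(b-2)}]^{<\lambda}$ against which no admissible $\sigma\colon P\mono\lambda^{+(b-2)}$ exists; thus $(\lambda^{+(b-2)},{<}\lambda)\not\leadsto P$, and the same counterexample rules out every $n\le b-1$ in Definition~\ref{D:KurInd}, so $\kur(P)\ge b$. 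The \js\ hypothesis is exactly what lets one reduce each $P\dnw x$ occurring in the definition to a join of the~$x_i$'s, and it is genuinely necessary: for a nontrivial antichain~$P$ one has $\brd(P)=1>0=\kur(P)$.
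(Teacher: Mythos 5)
The paper itself gives no proof of this proposition --- it quotes it from Gillibert and Wehrung \cite{GiWe1} --- so your proposal can only be measured against the route taken there and against its own internal completeness. Your treatment of $\dim(P)\leq\wdt\J(P)\leq\card\J(P)$ is correct and complete (the induction showing $a=\bigvee(\J(P)\dnw a)$ in a finite poset, the embedding $a\mapsto\J(P)\dnw a$ into the down-set lattice, and Dilworth's chain decomposition). But the heart of the statement, $\kur(P)\leq\dim(P)$ (which is also what yields existence of $\kur(P)$), is not proved: the passage ``plays the $d$ coordinates against the successive cardinal levels\dots I expect this to be the technical heart and the principal obstacle'' is a declaration of intent, not an argument. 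Nothing is said about how the values of~$F$ are actually absorbed, how the one-to-one-ness of~$\sigma$ is kept, or how the induction on~$d$ is set up; this iterated Kuratowski-type construction is exactly the nontrivial content of the result. The route in~\cite{GiWe1} factors this work through the calculus summarized in Proposition~\ref{P:EasyKurP}: a finite chain is a nontrivial tree, so $\kur=1$ there; $\kur(P\times Q)\leq\kur(P)+\kur(Q)$; and $\kur$ is monotone under poset embeddings; an embedding of~$P$ into a product of $\dim(P)$ finite chains then gives $\kur(P)\leq\dim(P)$ at once. If you allow yourself Proposition~\ref{P:EasyKurP} the inequality is a two-line consequence; if you do not, then you must prove those three facts (or your direct induction), and none of that is present.

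A second, smaller gap sits in the breadth part. Producing $x_0,\dots,x_{b-1}$ with $x_j\nleq\bigvee_{i\neq j}x_i$ from a failure of the breadth condition is fine, and it does embed the full cube $\Pow(b)$ into~$P$ order-isomorphically; also $(\lambda^{+(b-2)},{<}\lambda)\not\leadsto\Pow(b)$ via Proposition~\ref{P:rightarr2leadsto} and the sharpness of Kuratowski's theorem is correct. But ``the same counterexample rules out\dots'' is not literal: the map~$F$ defeating the cube need not defeat~$P$, because for $x<y$ in the cube the condition for~$P$ only controls $F\sigma(P\dnw x)$, not $F$ of the smaller set $\sigma$ applied to the cube's down-set. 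What you are silently using is that $(\kappa,{<}\lambda)\leadsto P$ passes to subposets (equivalently, monotonicity of $\kur$, Proposition~\ref{P:EasyKurP}(ii)). For finite~$P$ this is easily repaired --- replace~$F$ by $F'(A):=\bigcup\bigl(F(B)\mid B\subseteq A\bigr)$ for $\card A\leq\card P$ (a finite union of sets of size ${<}\lambda$) and arbitrary otherwise --- but it has to be said, and your parenthetical claim that the \js\ hypothesis ``lets one reduce each $P\dnw x$ to a join of the $x_i$'s'' misidentifies its role: the hypothesis is what manufactures the cube witness from the breadth defect, while the reduction to the subposet is a separate step. (Incidentally, under the paper's definition a nontrivial antichain has breadth~$2$, not~$1$, though that side remark does not affect the statement, which assumes~$P$ is a \js.)
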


The following corollary can be found in Gillibert and Wehrung~\cite{Larder}.

\begin{corollary}\label{C:Estimkur(P)}
For every infinite cardinal~$\lambda$, every nontrivial finite \ajs\ $P$ with zero has a $\lambda$-lifter $(X,\bX)$ with $\card X=\card\bX=\lambda^{+(\kur(P)-1)}$.
\end{corollary}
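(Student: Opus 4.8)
The plan is to derive Corollary~\ref{C:Estimkur(P)} as a direct consequence of Theorem~\ref{T:CharLift} together with the definition of the Kuratowski index (Definition~\ref{D:KurInd}). So let $P$ be a nontrivial finite \ajs\ with zero, and let $\lambda$ be an infinite cardinal. First I would set $n:=\kur(P)$; by Proposition~\ref{P:kur(P)exists} the integer $n$ exists, and since $P$ is nontrivial (hence not pairwise incomparable once it has a zero and at least one other element, $P$ being a \js) we have $n\geq 1$. I would then invoke Definition~\ref{D:KurInd} to get $(\lambda^{+(n-1)},{<}\lambda)\leadsto P$.

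Next I would put $\kappa:=\lambda^{+(n-1)}$ and check the hypotheses of Theorem~\ref{T:CharLift} for this particular $\kappa$ and $\lambda$. The poset $P$ is finite, hence lower finite; and since $P$ has a zero, it is \emph{a fortiori} a (one-term) disjoint union of \ajs s with zero. The remaining hypothesis of Theorem~\ref{T:CharLift} is that every element of $P$ has fewer than $\cf(\lambda)$ upper covers — this is immediate because $P$ is finite while $\cf(\lambda)\geq\aleph_0$. So condition~(iii) of Theorem~\ref{T:CharLift} holds, and therefore condition~(i) holds: $P$ has a $\lambda$-lifter $(X,\bX)$ with $\bX$ the set of all principal ideals of $X$, with $X$ a lower finite \ajs, and with $\card X=\kappa=\lambda^{+(\kur(P)-1)}$.

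Finally I would note that for such a lifter $\card\bX=\card X$: indeed $\bX$ consists of all principal ideals of $X$, the map $x\mapsto X\dnw x$ is one-to-one from $X$ onto $\bX$ (distinct elements of a poset have distinct principal ideals), so $\card\bX=\card X=\lambda^{+(\kur(P)-1)}$. This yields exactly the asserted lifter with $\card X=\card\bX=\lambda^{+(\kur(P)-1)}$.

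The only genuine point requiring a small argument is that the nontriviality of $P$ forces $\kur(P)\geq 1$, i.e. that $P$ is not pairwise incomparable: since $P$ has a zero and $P\neq\set{0_P}$, there is some $a>0_P$, so $0_P$ and $a$ are comparable and $P$ is not pairwise incomparable; hence by Definition~\ref{D:KurInd} the value $\kur(P)$ is a positive integer and $\lambda^{+(\kur(P)-1)}$ is well defined. Everything else is a bookkeeping application of Theorem~\ref{T:CharLift}, so I expect no real obstacle; the ``hard part'' has already been absorbed into Theorem~\ref{T:CharLift} and Proposition~\ref{P:kur(P)exists}.
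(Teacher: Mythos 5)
Your proposal is correct and follows exactly the route the paper intends: the survey states Corollary~\ref{C:Estimkur(P)} (citing \cite{Larder}) immediately after Theorem~\ref{T:CharLift} and Definition~\ref{D:KurInd}, and your derivation---taking $\kappa:=\lambda^{+(\kur(P)-1)}$, checking the (trivially satisfied, since $P$ is finite) hypotheses of Theorem~\ref{T:CharLift}, passing from condition~(iii) to condition~(i), and noting that $x\mapsto X\dnw x$ is a bijection of $X$ onto $\bX$---is precisely that argument. The observation that nontriviality plus the existence of a zero rules out $\kur(P)=0$ is the right small point to make explicit.
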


There are finite posets for which the Kuratowski index is easy to calculate, see Gillibert and Wehrung~\cite{GiWe1}.

\begin{proposition}\label{P:EasyKurP}
The following statements hold, for any finite posets~$P$ and~$Q$.
\begin{enumerate}
\item $\kur(P)=1$ whenever~$P$ is a nontrivial finite tree.

\item $P\subseteq Q$ implies that $\kur(P)\leq\kur(Q)$.

\item $\kur(P\times Q)\leq\kur(P)+\kur(Q)$.

\item Let $n$ be a natural number. If~$P$ is a product of~$n$ nontrivial trees, then $\kur(P)=n$.
\end{enumerate}
\end{proposition}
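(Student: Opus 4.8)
The plan is to establish~(i), (ii) and~(iii) in turn, and then to deduce~(iv); throughout I would exploit the reformulation recalled just after Notation~\ref{Not:leadsto}, namely that, since all posets at issue are finite (hence lower finite), the relation $(\kappa,{<}\lambda)\leadsto P$ may be tested with $\J(P)\dnw z$ in place of~$P\dnw z$. \emph{For~(i):} a nontrivial finite tree~$P$ has~$0_P$ strictly below some other element, so~$P$ is not pairwise incomparable and $\kur(P)\geq1$; it remains to verify $(\lambda,{<}\lambda)\leadsto P$ for every infinite~$\lambda$. First I would unravel that relation: because $\J(P)\dnw x$ is a subchain of~$P\dnw x$ and, crucially, $P\dnw y$ is a chain for every~$y$, a one-to-one map $\sigma\colon P\mono\lambda$ witnesses $(\lambda,{<}\lambda)\leadsto P$ as soon as $\sigma(z)\notin F\sigma(\J(P)\dnw x)$ whenever $x<z$ in~$P$. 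Then I would construct~$\sigma$ by recursion along a linear extension $p_0,p_1,\dots$ of~$P$: when $\sigma(p_i)$ is to be chosen, the tree hypothesis makes $\setm{x\in P}{x<p_i}$ a \emph{chain}, hence finite, and each $\J(P)\dnw x$ with $x<p_i$ lies in $\setm{p_j}{j<i}$; therefore $\bigl(\bigcup\setm{F\sigma(\J(P)\dnw x)}{x<p_i}\bigr)\cup\setm{\sigma(p_j)}{j<i}$ is already determined and has cardinality~$<\lambda$, and one picks $\sigma(p_i)$ outside it. This gives $\kur(P)=1$.

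\emph{For~(ii):} if~$P$ is pairwise incomparable then $\kur(P)=0\leq\kur(Q)$; otherwise~$Q$ is not pairwise incomparable either, and it suffices to show that $(\kappa,{<}\lambda)\leadsto Q$ implies $(\kappa,{<}\lambda)\leadsto P$ for all infinite~$\kappa,\lambda$. Given $F\colon\Pow(\kappa)\to[\kappa]^{<\lambda}$, I would apply $(\kappa,{<}\lambda)\leadsto Q$ to the ``thickened'' map~$\widehat F$, where $\widehat F(B):=\bigcup\setm{F(A)}{A\in[B]^{{\les}\card P}}$ if $\card B\les\card Q$ and $\widehat F(B):=\es$ otherwise; since $\card P$ and $\card Q$ are finite, each value of~$\widehat F$ is a finite union of sets of size~$<\lambda$, so $\widehat F\colon\Pow(\kappa)\to[\kappa]^{<\lambda}$. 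Let $\tau\colon Q\mono\kappa$ be the map produced, and set $\sigma:=\tau\res_P$. For $x<y$ in~$P$ one has $\card(P\dnw x)\les\card P$ and $\card(Q\dnw x)\les\card Q$, whence $F\sigma(P\dnw x)=F\tau(P\dnw x)\subseteq\widehat F\tau(Q\dnw x)$; combining this with the inclusion $\widehat F\tau(Q\dnw x)\cap\tau(Q\dnw y)\subseteq\tau(Q\dnw x)$, with $\sigma(P\dnw y)=\tau(P\dnw y)\subseteq\tau(P)$, and with the identity $\tau(Q\dnw x)\cap\tau(P)=\tau(P\dnw x)$ (immediate from injectivity of~$\tau$), one obtains $F\sigma(P\dnw x)\cap\sigma(P\dnw y)\subseteq\tau(Q\dnw x)\cap\tau(P)=\sigma(P\dnw x)$, which is the desired inclusion. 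Hence $\kur(P)\leq\kur(Q)$.

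\emph{For~(iii), which I expect to be the main obstacle:} write $m:=\kur(P)$ and $n:=\kur(Q)$. The cases $m=0$ or $n=0$ are immediate, since then $P\times Q$ is a finite disjoint union of copies of~$Q$ or of~$P$, and a finite disjoint union of copies of a poset~$R$ has Kuratowski index $\kur(R)$. For $m,n\geq1$ the plan is to prove a \emph{combination lemma} of the following type: for infinite cardinals $\lambda\leq\nu<\kappa$, if $(\kappa,{<}\nu^+)\leadsto P$ and $(\nu,{<}\lambda)\leadsto Q$, then $(\kappa,{<}\lambda)\leadsto P\times Q$. Granting this, one takes $\nu:=\lambda^{+(n-1)}$ and $\kappa:=\lambda^{+(m+n-1)}=(\nu^+)^{+(m-1)}$: then $(\nu,{<}\lambda)\leadsto Q$ holds because $\kur(Q)=n$, and $(\kappa,{<}\nu^+)\leadsto P$ holds because $\kur(P)=m$, so $(\lambda^{+(m+n-1)},{<}\lambda)\leadsto P\times Q$ for every infinite~$\lambda$, i.e.\ $\kur(P\times Q)\leq m+n$. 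To prove the combination lemma, given $F\colon\Pow(\kappa)\to[\kappa]^{<\lambda}$, I would first treat the ``$P$-direction'': define an auxiliary map $G\colon\Pow(\kappa)\to[\kappa]^{{\les}\nu}$ whose value on a set~$B$, regarded as a tentative image of some~$P\dnw p$, gathers \emph{all} the $Q$-level obstructions that the at most~$\nu$ ordinals sitting in the associated $Q$-fibres could generate, then apply $(\kappa,{<}\nu^+)\leadsto P$ to~$G$ to obtain a one-to-one $\rho\colon P\mono\kappa$. Relative to~$\rho$ the problem splits, fibrewise over $p\in P$, into $Q$-shaped problems whose obstruction sets now have size~$<\lambda$, each solved by $(\nu,{<}\lambda)\leadsto Q$; the fibre solutions are then glued into the required $\sigma\colon P\times Q\mono\kappa$. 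The delicate point, where I expect the work to concentrate, is the precise definition of~$G$ making the fibres genuinely independent; the passage from~$\nu$ to~$\nu^+$ is exactly what makes room for the~$\nu$ many values arising in each fibre. This mirrors the inductive proof of Kuratowski's Free Set Theorem.

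\emph{For~(iv):} by~(i), $\kur(T)=1$ for every nontrivial finite tree~$T$, so by~(iii) and induction on~$n$ one gets $\kur\bigl(\prod_{i<n}T_i\bigr)\leq\sum_{i<n}\kur(T_i)=n$. For the reverse inequality, pick an atom~$a_i$ in each nontrivial~$T_i$; then $\prod_{i<n}\set{0_{T_i},a_i}$ is a subposet of $\prod_{i<n}T_i$ isomorphic to the Boolean lattice $B_n:=\Pow(n)$, so $\kur\bigl(\prod_{i<n}T_i\bigr)\geq\kur(B_n)$ by~(ii); and since~$B_n$ is a finite \js\ with $\brd(B_n)=n$, Proposition~\ref{P:kur(P)exists} yields $\kur(B_n)\geq n$. (For $n=0$ the empty product is a one-element poset, and all three quantities equal~$0$.) Hence $\kur\bigl(\prod_{i<n}T_i\bigr)=n$.
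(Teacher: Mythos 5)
Parts (i), (ii), and the deduction of (iv) are sound: your recursion along a linear extension does verify $(\lambda,{<}\lambda)\leadsto P$ for a nontrivial finite tree (the tree property guarantees that for $x<y$ every element of $P\dnw y\setminus P\dnw x$ lies strictly above $x$, which is exactly what the recursion controls); the ``thickened'' map $\widehat F$ correctly transfers $(\kappa,{<}\lambda)\leadsto Q$ to subposets; and, granted (iii), the lower bound in (iv) via an embedded copy of $\two^n$, $\brd(\two^n)=n$, and Proposition~\ref{P:kur(P)exists} is fine. Your reduction of (iii) to the combination lemma (with $\nu:=\lambda^{+(n-1)}$ and $\kappa:=(\nu^+)^{+(m-1)}$) is also correct, and the lemma itself is true. (Note that the present survey does not prove this proposition at all; it refers to Gillibert and Wehrung \cite{GiWe1}.)

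The genuine gap is the proof of the combination lemma, exactly at the point you flag. After the $P$-level application the problem does \emph{not} split into independent $Q$-shaped fibre problems: for $p''\leq p$ the obstruction set $F\sigma\bigl((P\dnw p)\times(Q\dnw q)\bigr)$ depends on the fibres over \emph{all} points of $P\dnw p$, so the map you would feed to $(\nu,{<}\lambda)\leadsto Q$ at the fibre over $p''$ is not a function of that fibre alone. No choice of $G$ can repair this, because freeness of $\rho$ only ever protects fibres over points \emph{not} below $p$; unioning over all possible values of the other fibres yields obstruction sets of size $\nu$, not ${<}\lambda$; and a recursion along a linear extension of $P$ does not break the circularity, since the obstruction attached to an early fibre involves later fibres and conversely. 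A working proof replaces the ``glued fibre solutions'' by a \emph{single} second application: fix a bijection $c\colon\kappa\times\nu\to\kappa$, apply $(\kappa,{<}\nu^+)\leadsto P$ to $G(B):=\setm{\alpha<\kappa}{c``(\set{\alpha}\times\nu)\cap\bigcup\setm{F(A)}{A\in[c``(B\times\nu)]^{<\omega}}\neq\es}$ to get $\rho$, then apply $(\nu,{<}\lambda)\leadsto Q$ once to $H(B):=\setm{\beta<\nu}{(\exists p\in P)(\exists S\subseteq\rho(P))\bigl(c(\rho(p),\beta)\in F(c``(S\times B))\bigr)}$ (a finite union of sets of size ${<}\lambda$) to get $\tau$, and set $\sigma(p,q):=c(\rho(p),\tau(q))$; the case $p''\not\leq p$ is then handled by $G$, and the case $p''\leq p$, $q''\not\leq q$ by $H$. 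Finally, your degenerate case ($\kur(P)=0$ or $\kur(Q)=0$) is not ``immediate'' either: that a finite disjoint union of copies of $R$ has the same Kuratowski index as $R$ requires a similar bijection trick to produce pairwise disjoint free copies, since freeness with respect to a modified $F$ does not by itself force the images of successive applications of the relation to be disjoint.
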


In particular, it follows from Proposition~\ref{P:EasyKurP}(iv) that $\kur(\two^n)=n$ for every natural number~$n$; this is part of Kuratowski's Free Set Theorem (cf. Kuratowski~\cite{Kura51}).

However, there are also easily described finite lattices for which the Kuratowski index is unknown. For example, consider the finite lattices~$P$ and~$Q$ represented in Figure~\ref{Fig:lattices}.
\begin{figure}[htp]
\includegraphics{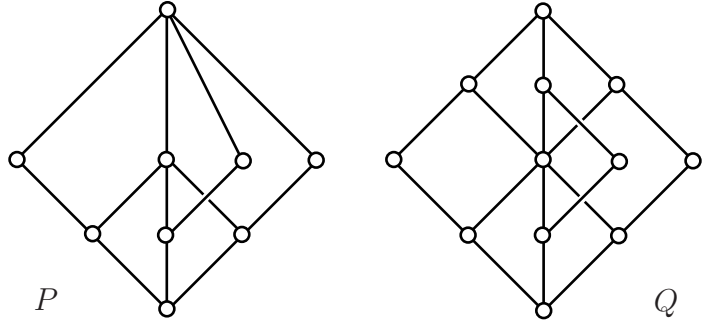}
\caption{Two lattices of breadth two and order-dimension three}\label{Fig:lattices}
\end{figure}
Both~$P$ and~$Q$ have breadth two and order-dimension three. As~$P$ embeds, as a poset, into~$Q$, we obtain the inequalities
 \[
 2\leq\kur(P)\leq\kur(Q)\leq 3\,.
 \]
We do not know which of those inequalities are actually equalities. A simple infinite combinatorial statement equivalent to $\kur(P)=2$ (resp., $\kur(Q)=2$) is given in Gillibert and Wehrung~\cite[Section~6]{GiWe1}.

\section{Large free sets and truncated cubes}\label{S:LargeFree}

Establishing upper bounds for various critical points, due either to Gillibert or to Plo\v{s}\v{c}ica, makes a heavy use of \emph{large free sets}. We recall the following notation, see Erd\H os \emph{et al.}~\cite{EHMR}.

\begin{notation}\label{Not:arrRel}
For cardinals $\kappa$, $\lambda$, $\rho$, and~$\mu$, let $(\kappa,\rho,\lambda)\rightarrow\mu$ hold if for every $F\colon[\kappa]^{\rho}\to[\kappa]^{<\lambda}$, there exists $H\in[\kappa]^{\mu}$ such that $F(X)\cap H\subseteq X$ for each $X\in[H]^\rho$ \pup{we say that~$H$ is \emph{free with respect to~$F$}}.
\end{notation}

For example, Kuratowski's Free Set Theorem (cf. Kuratowski~\cite{Kura51}) states that $(\kappa,n,\lambda)\rightarrow n+1$ holds if{f} $\kappa\geq\lambda^{+n}$, for all infinite cardinals~$\kappa$, $\lambda$ and every natural number~$n$.

For $1\leq r\leq m$, we define the \emph{truncated $m$-dimensional cube}
 \[
 \rB_m({\les}r):=\setm{X\in\Pow(m)}{\text{either }
 \card X\leq r\text{ or }X=m}\,,
 \]
endowed with containment. The symbols $(\kappa,\rho,\lambda)\rightarrow\mu$ and $(\kappa,{<}\lambda)\leadsto P$ are related, in Gillibert and Wehrung~\cite{GiWe1}, as follows.

\begin{proposition}\label{P:rightarr2leadsto}
Let $r$ and $m$ be integers with $1\leq r<m$ and let~$\kappa$ and~$\lambda$ be infinite cardinals. Then $(\kappa,{<}\lambda)\leadsto\rB_m({\les}r)$ if{f} $(\kappa,r,\lambda)\rightarrow m$.
\end{proposition}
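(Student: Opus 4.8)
The plan is to transport both sides of the equivalence to statements about \emph{free subsets} of~$\kappa$ and to bridge them by a simple saturation trick. The first step is to describe the join-irreducible part of $P:=\rB_m({\les}r)$. Since $1\le r<m$, the join-irreducible elements of~$P$ are precisely the $m$ singletons $\set i$, $i<m$: a singleton has only~$\es$ strictly below it; each $k$-element member of~$P$ with $2\le k\le r$ is the join of its~$k$ singletons; $\es$ is an empty join; and the top~$m$ (with $m\ge2$) is the join of all singletons. Hence $\J(P)$ is, canonically, the $m$-element antichain $\set{0,\dots,m-1}$, and $\J(P)\dnw X$ identifies with $X\subseteq m$ for each $X\in P$.

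Since~$P$ is finite, hence lower finite, I would use the reduction recalled right after Notation~\ref{Not:leadsto}, in its equivalent join-irreducible form: $(\kappa,{<}\lambda)\leadsto P$ holds iff for every $F\colon\Pow(\kappa)\to[\kappa]^{<\lambda}$ there is a one-to-one $\sigma\colon P\mono\kappa$ with $F(A_X)\cap A_Y\subseteq A_X$ for all $X\subsetneq Y$ in~$P$, where I abbreviate $A_X:=\setm{\sigma(\set i)}{i\in X}$. These constraints then collapse to a single layer: with $A:=A_m$ (so $\card A=m$ by injectivity of~$\sigma$), every constraint with $Y\ne m$ follows from the one with $Y=m$ since $A_Y\subseteq A$, and no constraint has $X=m$. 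Moreover $X\mapsto A_X$ maps $\setm{X\in P}{\card X\le r}$ bijectively onto $[A]^{{\les}r}$, and a one-to-one extension $P\mono\kappa$ of a bijection $m\to A$ always exists ($P$ finite, $\kappa$ infinite). So $(\kappa,{<}\lambda)\leadsto P$ is equivalent to the statement~$(\star)$: \emph{for every $F\colon\Pow(\kappa)\to[\kappa]^{<\lambda}$ there is $A\in[\kappa]^m$ such that $F(T)\cap A\subseteq T$ for every $T\in[A]^{{\les}r}$.}

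It then remains to identify~$(\star)$ with $(\kappa,r,\lambda)\rightarrow m$. The implication $(\star)\Rightarrow(\kappa,r,\lambda)\rightarrow m$ is immediate: extend a given $F\colon[\kappa]^r\to[\kappa]^{<\lambda}$ by~$\es$ on the rest of $\Pow(\kappa)$, apply~$(\star)$, and retain the conclusion for $T\in[A]^r$. Conversely, given $F\colon\Pow(\kappa)\to[\kappa]^{<\lambda}$, set $F'(S):=\bigcup_{W\subseteq S}F(W)$ for $S\in[\kappa]^r$; this is a union of at most $2^r$ sets from $[\kappa]^{<\lambda}$, hence again lies in $[\kappa]^{<\lambda}$, for \emph{every} infinite~$\lambda$. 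Applying $(\kappa,r,\lambda)\rightarrow m$ to~$F'$ yields a free $A\in[\kappa]^m$; for $T\in[A]^{{\les}r}$ and any $S\in[A]^r$ with $T\subseteq S$ one gets $F(T)\cap A\subseteq F'(S)\cap A\subseteq S$, and intersecting over all such~$S$ gives $F(T)\cap A\subseteq T$ because $r<m$ forces $\bigcap\setm{S\in[A]^r}{T\subseteq S}=T$. This is~$(\star)$, which closes the chain of equivalences.

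I expect the crux to be this last bridge, i.e.\ passing between ``$[A]^{{\les}r}$-freeness'' — the form the $\leadsto$-relation naturally takes once the join-irreducible reduction has been applied — and ``$[A]^r$-freeness'', the shape of the classical $\rightarrow$-relation. Two points demand care there: the saturated map~$F'$ must stay within $[\kappa]^{<\lambda}$ simultaneously for all infinite~$\lambda$, which is exactly why only the fixed finite number~$2^r$ of unions is formed; and recovering freeness at a set~$T$ of size below~$r$ from freeness at its $r$-element extensions genuinely uses $r<m$, through the identity $\bigcap\setm{S\in[A]^r}{T\subseteq S}=T$. The remaining verifications — that $P$ is lower finite and that the relevant extension of~$\sigma$ exists — are harmless.
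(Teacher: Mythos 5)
Your argument is correct, and it is essentially the intended one: the survey gives no proof of Proposition~\ref{P:rightarr2leadsto} but defers it to Gillibert and Wehrung~\cite{GiWe1}, where the same route is taken --- identify $\J\bigl(\rB_m({\les}r)\bigr)$ with the $m$-element antichain of singletons, collapse the $\leadsto$-constraints to the single top-layer statement $(\star)$ about a set $A\in[\kappa]^m$, and pass between $[A]^{{\les}r}$-freeness and $[A]^{r}$-freeness via the finite saturation $F'(S)=\bigcup_{W\subseteq S}F(W)$ together with $\bigcap\setm{S\in[A]^r}{T\subseteq S}=T$, which is exactly where $r<m$ enters. One small remark: the sentence following Notation~\ref{Not:leadsto} literally asserts only that the $\J(P)$-form is \emph{sufficient} for $(\kappa,{<}\lambda)\leadsto P$, whereas your left-to-right implication also uses the converse; that converse is part of the equivalence proved in~\cite{GiWe1}, and in any case follows from the same union-over-finite-subsets trick (using injectivity of~$\sigma$ and $(P\dnw x)\cap\J(P)=\J(P)\dnw x$), so this is not a genuine gap.
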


Much is already known about the relation $(\kappa,r,\lambda)\rightarrow m$.

\begin{theorem}\label{T:rightarrRem}
Let~$\lambda$ be an infinite cardinal.
\begin{enumerate}
\item If $\mathsf{GCH}$ holds, then $(\lambda^{+r},r,\lambda)\rightarrow\lambda^+$ for every integer $r\geq1$.

\item $(\lambda^{+2},2,\lambda)\rightarrow m$ for every integer $m>0$. Hence $\kur\bigl(\rB_m({\les}2)\bigr)=3$ for all $m\geq3$.

\item $(\lambda^{+3},3,\lambda)\rightarrow m$ for every integer $m>0$. Hence $\kur\bigl(\rB_m({\les}3)\bigr)=4$ for all $m\geq4$.
\end{enumerate}
\end{theorem}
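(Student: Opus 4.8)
The theorem bundles three arrow relations together with two Kuratowski-index equalities, and the latter are routine consequences of the former; so the plan is to dispose of the equalities first. By Proposition~\ref{P:rightarr2leadsto}, for integers $1\le r<m$ and infinite cardinals~$\kappa$, $\lambda$ one has $(\kappa,{<}\lambda)\leadsto\rB_m({\les}r)$ if and only if $(\kappa,r,\lambda)\rightarrow m$. Hence, granting item~(ii), we get $(\lambda^{+2},{<}\lambda)\leadsto\rB_m({\les}2)$ for every infinite~$\lambda$ and every $m\ge3$, so that $\kur\bigl(\rB_m({\les}2)\bigr)\le3$. Conversely, $\rB_m({\les}2)$ is a nontrivial finite lattice with least element~$\es$ (the meet is intersection, and the join of two incomparable elements is the top~$m$ unless their union already has size at most~$2$), hence a finite \ajs\ with zero by the remark following Definition~\ref{D:PJS}, and it is not pairwise incomparable; moreover $\lambda^{+(n-1)}<\lambda^{+2}$ whenever $n\le2$, so $(\lambda^{+(n-1)},2,\lambda)\rightarrow3$ fails by Kuratowski's Free Set Theorem~\cite{Kura51}, whence so does $(\lambda^{+(n-1)},2,\lambda)\rightarrow m$. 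This gives $\kur\bigl(\rB_m({\les}2)\bigr)\ge3$, so $\kur\bigl(\rB_m({\les}2)\bigr)=3$ for $m\ge3$; the computation $\kur\bigl(\rB_m({\les}3)\bigr)=4$ for $m\ge4$ is identical, using item~(iii) for the upper bound and the failure of $(\lambda^{+(n-1)},3,\lambda)\rightarrow4$ for $n\le3$ for the lower bound.

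It remains to prove the three arrow relations. Item~(i) for $r=1$ I would do by hand. Given a set mapping $F\colon[\lambda^+]^1\to[\lambda^+]^{<\lambda}$, put $h(\gamma):=\sup F(\set\gamma)+1$, which is ${<}\lambda^+$ since $\lambda^+$ is regular and $\card F(\set\gamma)<\lambda$, and let~$C$ be the club of closure points of the function~$h$. For $\gamma<\delta$ in~$C$ one has $h(\gamma)<\delta$ and $F(\set\gamma)\subseteq h(\gamma)$, hence $\delta\notin F(\set\gamma)$; so, writing $C=\setm{\gamma_\xi}{\xi<\lambda^+}$ in increasing order, $g(\xi):=\setm{\eta}{\gamma_\eta\in F(\set{\gamma_\xi})}$ defines (after discarding the harmless element~$\gamma_\xi$ from $F(\set{\gamma_\xi})$ if present) a \emph{regressive} set mapping on~$\lambda^+$ of order~${<}\lambda$. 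Coloring~$\lambda^+$ greedily in increasing order with~$\lambda$ colors—possible because $\card g(\xi)<\lambda$—partitions it into~$\lambda$ free sets for~$g$, one of which has cardinality~$\lambda^+$ because $\cf(\lambda^+)=\lambda^+>\lambda$, and its image under $\xi\mapsto\gamma_\xi$ is then a free set of size~$\lambda^+$ for~$F$. (In particular this case of~(i) needs no hypothesis beyond~$\mathsf{ZFC}$.)

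The real work is item~(i) for $r\ge2$ together with items~(ii) and~(iii), and here I would follow Gillibert and Wehrung~\cite{GiWe1} (compare also~\cite{EHMR}). The scheme is an induction—on the finite target~$m$ in~(ii) and~(iii), the base case $m=r+1$ being Kuratowski's Free Set Theorem, and, for~(i), a transfinite refinement of that scheme which uses~$\mathsf{GCH}$ to push the target up to~$\lambda^+$. In each step one threads a chain $\famm{M_\xi}{\xi}$ of elementary submodels of a large enough $(H(\theta);\in,F,\dots)$, each of cardinality~$\lambda^{+(r-1)}$ with $\lambda^{+(r-1)}\subseteq M_\xi$, and enlarges, one point at a time, a free set living inside the models; when a new point~$a$ is adjoined above a model~$M$ containing the current free set~$H_0$, freeness is automatic on the $r$-element subsets of $H_0\cup\set a$ that miss~$a$, because $F(X)\subseteq M$ for every finite $X\subseteq M$. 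The delicate point—and, I expect, the one genuine obstacle—is to keep freeness on the subsets $Y\cup\set a$ with $Y\in[H_0]^{r-1}$: the value $F(Y\cup\set a)$ is completely uncontrolled, so~$a$ must be picked outside each of the sets $\setm{b}{c\in F(Y\cup\set b)}$ with $c\in H_0\setminus Y$, and such a set can be avoided exactly when~$c$ is not ``$Y$-popular'', i.e.\ lies in $F(Y\cup\set b)$ for fewer than~$\lambda^{+r}$ values of~$b$. So everything comes down to a reflection-and-counting argument—carried out in~\cite{GiWe1}—showing that the popular elements can themselves be confined to a set small enough for the induction to close off; this is where the successive cardinal levels $\lambda<\lambda^+<\dots<\lambda^{+r}$ are indispensable, and where~$\mathsf{GCH}$ is used in the proof of~(i).
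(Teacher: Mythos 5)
The paper itself offers no proof of this theorem: it is a survey statement, and immediately after it the text attributes item~(i) to \cite[Theorem~45.5]{EHMR}, item~(ii) to Hajnal and M\'at\'e~\cite{HaMa75} (see also \cite[Theorem~46.2]{EHMR}), and item~(iii) to Hajnal via the same reference. So your overall strategy --- derive the two Kuratowski-index equalities from the arrow relations and defer the arrow relations themselves to the literature --- matches the paper's treatment in spirit, and the parts you actually argue are correct: the derivation of $\kur\bigl(\rB_m({\les}2)\bigr)=3$ and $\kur\bigl(\rB_m({\les}3)\bigr)=4$ from Proposition~\ref{P:rightarr2leadsto}, the definition of the Kuratowski index, and the failure half of Kuratowski's Free Set Theorem is exactly what the ``Hence'' clauses require, and your club-plus-greedy-colouring proof of $(\lambda^{+},1,\lambda)\rightarrow\lambda^{+}$, with the correct observation that no $\mathsf{GCH}$ is needed for $r=1$, is sound.

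Two points need repair. First, the attribution: \cite{GiWe1} is not where items (i)--(iii) are proved. That paper establishes the \emph{new} relation $(\lambda^{+n},n,\lambda)\rightarrow n+2$ (Theorem~\ref{T:Gilln+2} here), and its method --- bounding the order-dimension, hence the Kuratowski index, of truncated cubes --- cannot yield $(\lambda^{+2},2,\lambda)\rightarrow m$ for arbitrary~$m$; in the present theorem the equality $\kur\bigl(\rB_m({\les}2)\bigr)=3$ is a \emph{consequence} of~(ii), not a route to it. The sources you want are \cite[Theorem~45.5]{EHMR} for~(i) and \cite{HaMa75}, \cite[Theorem~46.2]{EHMR} for~(ii) and~(iii). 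Second, if the deferral is not accepted, your sketch for~(i) with $r\geq2$ and for~(ii),~(iii) is only heuristic: the elementary-submodel threading and the count of ``popular'' elements are plausible, but the confinement argument that closes the induction --- and the precise point where $\mathsf{GCH}$ enters in~(i) --- is exactly what is left unproved, so as written the hard cases do not stand on their own. Either cite the correct classical results, as the paper does, or carry out the Hajnal--M\'at\'e-style induction in full.
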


Theorem~\ref{T:rightarrRem}(i) is proved in Erd\H os \emph{et al.}~\cite[Theorem~45.5]{EHMR}.
Theorem~\ref{T:rightarrRem}(ii) is proved in Hajnal and M\'at\'e~\cite{HaMa75}, see also Erd\H os \emph{et al.}~\cite[Theorem~46.2]{EHMR}. The first use of that property for estimating critical points is due to Plo\v s\v cica~\cite{Plos00}. Theorem~\ref{T:rightarrRem}(iii) is attributed, in Erd\H os \emph{et al.}~\cite[Theorem~46.2]{EHMR}, to Hajnal.

The situation for higher truncated cubes is more strange. Set $t_0:=5$, $t_1:=7$, and, for each~$n>0$, let $t_{n+1}\rightarrow(t_n,7)^5$. The latter symbol means that for every $f\colon[t_{n+1}]^5\to\set{0,1}$, either there exists $H\in[t_{n+1}]^{t_n}$ such that $f``[H]^5=\set{0}$ or there exists $H\in[t_{n+1}]^7$ such that $f``[H]^5=\set{1}$. The sequence $\famm{t_n}{n<\omega}$ exists, due to Ramsey's Theorem. The following result is established in Komj\'ath and Shelah~\cite{KoSh00}.

\begin{theorem}\label{T:KoSh00}
For every $n>0$, there exists a generic extension of the universe in which $(\aleph_n,4,\aleph_0)\not\rightarrow t_n$.
\end{theorem}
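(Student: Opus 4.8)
The cases $1\le n\le 3$ are immediate: by Kuratowski's Free Set Theorem (as stated above, since $\aleph_n<\aleph_0^{+4}=\aleph_4$) one has $(\aleph_n,4,\aleph_0)\not\rightarrow 5$ in $\mathsf{ZFC}$, and a subset of a free set is free, so $(\aleph_n,4,\aleph_0)\not\rightarrow t_n$ already in~$V$. Thus the content lies in $n\ge 4$, and the plan is to induct on~$n$ (starting from a ground model of $\mathsf{GCH}$, where the cases $n\le 3$ hold), the step $n\to n+1$ being where the Ramsey recursion $t_{n+1}\rightarrow(t_n,7)^5$ is consumed. At that step I would, over a model~$W$ satisfying $(\aleph_n,4,\aleph_0)\not\rightarrow t_n$, force with a poset~$\mathbb{P}_{n+1}$ adjoining a generic function $F\colon[\omega_{n+1}]^4\to[\omega_{n+1}]^{<\aleph_0}$ by approximations, arranged so that all cardinals $\le\aleph_{n+1}$ are preserved; equivalently one may force $(\kappa,4,\aleph_0)\not\rightarrow t_{n+1}$ at a cardinal~$\kappa$ and apply $\mathrm{Coll}(\aleph_n,{<}\kappa)$, since a free set of size~$t_{n+1}$ is finite and hence present already in the intermediate model, so the relation persists through the collapse.

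The verification that no $H\in[\omega_{n+1}]^{t_{n+1}}$ is free for~$F$ would go as follows. Suppose one were. Define a two-valued colouring of $[H]^5$ that records, for a $5$-element $s\subseteq H$, whether the generic information about~$F$ on the five $4$-subsets of~$s$ is still ``flexible'' (the generic can still be extended to capture a point of~$s\setminus X$ for one of these $X$) or is ``rigidly'' determined by the cardinal-preserving/closed part of~$\mathbb{P}_{n+1}$. By $t_{n+1}\rightarrow(t_n,7)^5$ there is a flexible-homogeneous $H'\in[H]^7$ or a rigid-homogeneous $H''\in[H]^{t_n}$. In the flexible case a direct density argument produces $X\in[H']^4$ and $z\in H'\setminus X$ with (in some extension) $z\in F(X)$, so $H'$ is not free, a contradiction. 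In the rigid case, the restriction of~$F$ to~$[H'']^4$ is governed by level-$n$ data living in a suitable intermediate submodel, and the freeness of~$H''$ reflects there to a free set of size~$t_n$ for a copy of the level-$n$ bad function --- contradicting the induction hypothesis. Hence $(\aleph_{n+1},4,\aleph_0)\not\rightarrow t_{n+1}$, completing the induction.

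\textbf{The main obstacle} is to build~$\mathbb{P}_{n+1}$ so that both the cardinal preservation and the flexible/rigid dichotomy actually work. A forcing adding an arbitrary $F$ by plain finite conditions collapses cardinals --- already its restriction to~$[\omega]^4$ can be made to surject onto $\{\{\alpha\}:\alpha<\omega_{n+1}\}$ --- so the admissible approximations must be constrained, e.g.\ by side conditions (finite $\in$-chains of elementary submodels, \`a la Todor\v{c}evi\'c) or by splitting off an $\aleph_n$-closed part to carry the function; and this must be done so that ``flexible'' is preserved by the collapsing part while ``rigid'' reflects faithfully into a genuine level-$n$ instance, which in turn forces~$\mathbb{P}_{n+1}$ to be homogeneous enough that ``$H$ is free'' is absolute between the relevant intermediate models. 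Finally, the seeds $t_0=5$, $t_1=7$ and the recursion must be fine-tuned so that a rigid homogeneous set of size~$t_n$ is large enough to carry a real level-$n$ counterexample, while a flexible homogeneous set of size~$7$ is at once large enough (more than $4$ elements) that some $4$-subset leaves an outside target and small enough that the generic is guaranteed room on each of its $4$-subsets. Reconciling these competing requirements through the forcing bookkeeping is the delicate part.
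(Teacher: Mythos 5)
The paper does not prove this theorem at all: it is quoted as a result of Komj\'ath and Shelah~\cite{KoSh00}, so your proposal has to be judged as a self-contained argument, and as such it has a genuine gap. Your treatment of $n\le 3$ is fine (Kuratowski gives $(\aleph_n,4,\aleph_0)\not\rightarrow 5$ outright in $\mathsf{ZFC}$ for $\aleph_n<\aleph_4$, and $t_n\ge 5$, and the reduction of the collapse step via absoluteness of freeness for finite sets is also sound in itself). But for $n\ge 4$, which is the entire content of the theorem, what you give is a programme rather than a proof: the poset $\mathbb{P}_{n+1}$ is never defined, and every ingredient that would make the scheme work --- cardinal preservation for the constrained finite-condition (or side-condition) forcing, the precise definition of the ``flexible/rigid'' colouring of $[H]^5$, the density argument in the flexible case, and above all the claim that a rigid-homogeneous set of size $t_n$ reflects to a genuine free set for a level-$n$ bad set mapping living in an intermediate model --- is asserted and then explicitly deferred as ``the delicate part.'' In the Komj\'ath--Shelah argument this is where essentially all the work lies: the recursion $t_{n+1}\rightarrow(t_n,7)^5$ is consumed inside a careful amalgamation analysis of conditions over a purported free set, and designing the conditions so that the forcing both preserves cardinals and kills all free sets of size $t_{n+1}$ is precisely the nontrivial combinatorial content, not bookkeeping that can be waved at.

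There is also a specific soft spot in the inductive transfer: you need the level-$n$ failure $(\aleph_n,4,\aleph_0)\not\rightarrow t_n$ to hold in the particular intermediate model over which the top-level forcing factors at the rigid homogeneous set, with the level-$n$ bad function available (or definable) there, and you need ``$H$ is free'' to be suitably absolute between that submodel and the full extension. Declaring that the forcing is ``homogeneous enough'' for this is exactly the kind of property that must be built into $\mathbb{P}_{n+1}$ and verified; nothing in the proposal guarantees it, and a generic function added by unconstrained finite conditions (your own example) shows the construction cannot be generic-symbol-pushing. So the proposal sketches a plausible shape of argument but leaves its core unproved; to make it a proof you would essentially have to reconstruct the construction of~\cite{KoSh00}.
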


In particular, there exists a generic extension of the universe with $(\aleph_4,4,\aleph_0)\not\rightarrow\nobreak t_4$. Hence $\kur\bigl(\rB_{t_4}({\les}4)\bigr)=5$ in any universe with~$\mathsf{GCH}$, while $\kur\bigl(\rB_{t_4}({\les}4)\bigr)\geq6$ in some generic extension. In particular, \emph{The assignment $P\mapsto\kur(P)$, for a finite lattice~$P$, is not absolute} (in the set-theoretical sense).

By using Propositions~\ref{P:kur(P)exists}, \ref{P:EasyKurP}, and~\ref{P:rightarr2leadsto}, it is possible to get new estimates for large free sets results. The first such estimate is the following. For any positive integer~$n$, it is not hard to verify that $\brd\bigl(\rB_{n+2}({\les}n)\bigr)=\dim\bigl(\rB_{n+2}({\les}n)\bigr)=n+1$. By Proposition~\ref{P:kur(P)exists}, it follows that $\kur\bigl(\rB_{n+2}({\les}n)\bigr)=n+1$. We obtain the following result of Gillibert~\cite{GillTh}, see also Gillibert and Wehrung~\cite{GiWe1} for a self-contained proof.

\begin{theorem}\label{T:Gilln+2}
The relation $(\lambda^{+n},n,\lambda)\rightarrow n+2$ holds for each infinite cardinal~$\lambda$ and each positive integer~$n$.
\end{theorem}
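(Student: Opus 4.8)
The plan is to route the statement through the relation~$\leadsto$ and the Kuratowski index. By Proposition~\ref{P:rightarr2leadsto}, applied with $r:=n$ and $m:=n+2$ (the hypothesis $1\le r<m$ holds since $n\ge1$), the relation $(\lambda^{+n},n,\lambda)\rightarrow n+2$ is equivalent to $(\lambda^{+n},{<}\lambda)\leadsto\rB_{n+2}({\les}n)$. By the very definition of the Kuratowski index (Definition~\ref{D:KurInd}), once we know $\kur\bigl(\rB_{n+2}({\les}n)\bigr)=n+1$, the relation $(\lambda^{+((n+1)-1)},{<}\lambda)\leadsto\rB_{n+2}({\les}n)$---that is, $(\lambda^{+n},{<}\lambda)\leadsto\rB_{n+2}({\les}n)$---holds for every infinite cardinal~$\lambda$. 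So the whole theorem reduces to the equality $\kur\bigl(\rB_{n+2}({\les}n)\bigr)=n+1$, and the remainder of the plan is devoted to it.

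To compute this index I would use the sandwich $\brd(P)\le\kur(P)\le\dim(P)$ furnished by Proposition~\ref{P:kur(P)exists}, which applies because $P:=\rB_{n+2}({\les}n)$ is a finite \js---indeed a finite lattice: its top is~$n+2$, its bottom is~$\es$, the meet of two members is their intersection (of size~$\le n$, hence again a member), and the join of $A$ and~$B$ equals $A\cup B$ when $\card(A\cup B)\le n$ and equals~$n+2$ otherwise, since $n+2$ is the only member of~$P$ containing a set of size~$n+1$. Given the sandwich it suffices to establish $\brd(P)\ge n+1$ and $\dim(P)\le n+1$. The breadth bound is the easy half: taking $x_i:=\set{i}$ and $y_i:=\set{0,1,\dots,n}\setminus\set{i}$ for $0\le i\le n$ produces elements of~$P$ with $x_i\le y_j$ for all $i\neq j$ but $x_i\not\le y_i$ for every~$i$, which by the very definition of breadth forces $\brd(P)\ge n+1$.

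The inequality $\dim(P)\le n+1$ is the step I expect to be the real obstacle; it is the only genuine computation, since the matching lower bound $\dim(P)\ge\brd(P)=n+1$ is free from the sandwich. Deleting the top and bottom of a poset does not change its order-dimension, so the task is to realize the poset $\setm{A\subseteq\set{0,1,\dots,n+1}}{1\le\card A\le n}$, ordered by inclusion, inside a product of $n+1$ chains---equivalently, to exhibit $n+1$ linear extensions of it whose intersection is the inclusion order. I would write down such a family of extensions explicitly, in the spirit of the standard low-dimensional realizers of truncated Boolean lattices; the delicate point is to check that between them they reverse every critical pair of~$P$ (a critical pair here has the shape $\bigl(\set{i},B\bigr)$ with~$B$ an $n$-element set avoiding~$i$). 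Once $\brd(P)=\dim(P)=n+1$ is in hand, the sandwich gives $\kur(P)=n+1$, and then, as in the first paragraph, Definition~\ref{D:KurInd} together with Proposition~\ref{P:rightarr2leadsto} completes the proof.
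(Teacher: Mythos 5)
Your proposal follows essentially the same route as the paper: reduce via Proposition~\ref{P:rightarr2leadsto} and Definition~\ref{D:KurInd} to showing $\kur\bigl(\rB_{n+2}({\les}n)\bigr)=n+1$, and obtain that from $\brd\bigl(\rB_{n+2}({\les}n)\bigr)=\dim\bigl(\rB_{n+2}({\les}n)\bigr)=n+1$ together with the sandwich of Proposition~\ref{P:kur(P)exists}. The one step you leave as a sketch, the realizer showing $\dim\bigl(\rB_{n+2}({\les}n)\bigr)\leq n+1$, is exactly the step the paper also dispatches with ``it is not hard to verify'' (referring to~\cite{GiWe1} for details), so your argument is correct and matches the paper's.
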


In particular, $(\aleph_4,4,\aleph_0)\rightarrow 6$, which answers a question raised in Erd\H os \emph{et al.}~\cite[page~285]{EHMR}. The previously known bound, namely $(\aleph_4,4,\aleph_0)\rightarrow 5$, follows from Kuratowski's Free Set Theorem. The next open question is, naturally, whether $(\aleph_4,4,\aleph_0)\rightarrow 7$. This cannot go on forever, as we know, from Komj\'ath and Shelah's result, that $(\aleph_4,4,\aleph_0)\not\rightarrow t_4$ in some generic extension.

The large free set relation $(\aleph_4,4,\aleph_0)\rightarrow 6$ is obtained by finding an upper bound (in fact, here, an exact evaluation) of the order-dimension of $\rB_6({\les}4)$ (namely, $5$). Now the problem of estimating the order-dimensions of truncated cubes~$\rB_m({\les}r)$ is a whole topic, started by Dushnik~\cite{Dush50} and going on in various works such as Brightwell \emph{et al.}~\cite{BKKT94}, F\"uredi and Kahn~\cite{FuKa86}, Kierstead~\cite{Kier96,Kier99}, Spencer~\cite{Spen71}. Using the various estimates available, we obtain in Gillibert and Wehrung~\cite{GiWe1} new large free sets relations, of which a sample is the following.

\goodbreak

\begin{proposition}\label{P:FreeSetSample}\hfill
\begin{enumerate}
\item $(\aleph_7,4,\aleph_0)\rightarrow10$.

\item $(\aleph_9,5,\aleph_0)\rightarrow12$.

\item $(\aleph_{109},4,\aleph_0)\rightarrow257$.

\item $(\aleph_{210},4,\aleph_0)\rightarrow32{,}768$.
\end{enumerate}
\end{proposition}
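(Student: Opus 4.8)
The plan is to derive each of the four relations from an upper bound on the order-dimension of a truncated cube, in exactly the spirit of the deduction of $(\aleph_4,4,\aleph_0)\rightarrow6$ from $\dim\bigl(\rB_6({\les}4)\bigr)=5$ recalled just above. \emph{The master reduction.} Fix integers $1\leq r<m$ and an integer $n\geq1$, and suppose one can show that $\dim\bigl(\rB_m({\les}r)\bigr)\leq n$. Since $r<m$, the poset $\rB_m({\les}r)$ is not pairwise incomparable, so Proposition~\ref{P:kur(P)exists} gives $\kur\bigl(\rB_m({\les}r)\bigr)\leq\dim\bigl(\rB_m({\les}r)\bigr)\leq n$; by Definition~\ref{D:KurInd} this yields $(\lambda^{+(n-1)},{<}\lambda)\leadsto\rB_m({\les}r)$ for every infinite cardinal~$\lambda$ (using that the relation $(\kappa,{<}\lambda)\leadsto\rB_m({\les}r)$, equivalently $(\kappa,r,\lambda)\rightarrow m$ by Proposition~\ref{P:rightarr2leadsto}, is preserved when~$\kappa$ is increased — just restrict an offending colouring of the larger cardinal back to the smaller one, intersecting its values with it). Plugging this into Proposition~\ref{P:rightarr2leadsto} gives $(\lambda^{+(n-1)},r,\lambda)\rightarrow m$ for all infinite~$\lambda$, and taking $\lambda:=\aleph_0$ (so that $\lambda^{+(n-1)}=\aleph_{n-1}$) gives $(\aleph_{n-1},r,\aleph_0)\rightarrow m$. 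Hence the four assertions follow once we establish $\dim\bigl(\rB_{10}({\les}4)\bigr)\leq8$, $\dim\bigl(\rB_{12}({\les}5)\bigr)\leq10$, $\dim\bigl(\rB_{257}({\les}4)\bigr)\leq110$, and $\dim\bigl(\rB_{32768}({\les}4)\bigr)\leq211$.

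\emph{Estimating the order-dimensions.} Here one uses the classical translation (Spencer~\cite{Spen71}) according to which realizers of $\rB_m({\les}r)$ correspond to families of linear orders of an $m$-element set that are \emph{$(r+1)$-suitable} (``scrambling''): for every $(r+1)$-element subset~$A$ and every $a\in A$, some order of the family puts~$a$ last within~$A$. Thus $\dim\bigl(\rB_m({\les}r)\bigr)$ equals the least size of such a family. For parts~(i) and~(ii) the ground sets are small, and one may either read the value off Dushnik's~\cite{Dush50} evaluation of $\dim\bigl(\rB_m({\les}r)\bigr)$ in the relevant range, or exhibit directly a $5$-suitable (resp.\ $6$-suitable) family of $8$ (resp.\ $10$) linear orders by a short explicit construction. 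For parts~(iii) and~(iv) one invokes the recursive constructions of $5$-suitable families due to F\"uredi and Kahn~\cite{FuKa86}, Kierstead~\cite{Kier96,Kier99}, and Brightwell \emph{et al.}~\cite{BKKT94}: bootstrapping a small suitable family through the (product/amalgamation) step a controlled number of times, one keeps the family size at most~$110$ (resp.~$211$) while pushing the ground set up to $257=2^8+1$ (resp.\ $32768=2^{15}$), which is precisely where the relevant step function jumps.

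\emph{Where the difficulty lies.} The master reduction is essentially formal once Propositions~\ref{P:kur(P)exists} and~\ref{P:rightarr2leadsto} are in hand; the substance of the proof is entirely in the order-dimension bounds. The delicate points are, first, checking that the recursively built permutation families really are $(r+1)$-suitable — i.e., that suitability is inherited at each step of the construction — and, second, the numerical optimization: arranging the base case and the number of iterations so as to land exactly on the ground sets $10$, $12$, $257$, $32768$ with family sizes $8$, $10$, $110$, $211$. These are quantitative refinements of the arguments of Dushnik, Spencer, F\"uredi--Kahn, Kierstead, and Brightwell \emph{et al.}, and it is the bookkeeping in those constructions, rather than any new idea, that constitutes the work behind Proposition~\ref{P:FreeSetSample}.
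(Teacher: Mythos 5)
Your proposal takes essentially the same route as the paper: each relation is derived from an upper bound on $\dim\bigl(\rB_m({\les}r)\bigr)$, passed through $\kur\leq\dim$ (Proposition~\ref{P:kur(P)exists}) and the equivalence of Proposition~\ref{P:rightarr2leadsto}, with the dimension estimates themselves cited rather than reproved, and the required bounds ($\dim\leq 8,10,110,211$ on ground sets $10,12,257,32768$) are exactly the ones the paper uses. The only slight discrepancy is attribution of the estimates: the paper credits (i)--(ii) to Dushnik's exact evaluation, (iii) to F\"uredi--Kahn, and (iv) to the Hajnal estimate reported in Spencer, rather than the Kierstead/Brightwell \emph{et al.} constructions you invoke for (iii)--(iv).
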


(\emph{Although it looks quite strange that the size of the free set is one more than a power of~$2$ in~\textup{(iii)} while it is a power of two in~\textup{(iv)}, this seems to be due to the formulas involved having nothing to do with each other}.)
Proposition~\ref{P:FreeSetSample}(i,ii) uses Dushnik's exact estimate from~\cite{Dush50}, while~(iii) uses F\"uredi and Kahn's estimate from~\cite{FuKa86}, and~(iv) uses the estimate attributed in Spencer~\cite{Spen71} to Hajnal. Among those estimates, the best asymptotic behavior is enjoyed by Hajnal and Spencer's one. This estimate makes it possible to prove that $(\aleph_n,r,\aleph_0)\rightarrow E(n,r)$ with
 \[
 \lg\lg E(n,r)\sim\frac{n}{r2^r\log 2}\text{ as }n\gg r\gg 0
 \]
(where $\lg x$ denotes the base~$2$ logarithm while $\log x$ denotes the natural logarithm).

\section{Appendix: norm-coverings and lifters}\label{S:App}

In the present section we shall give formal definitions for some of the objects involved in Section~\ref{S:CLL}. We restrict ourselves to infinite combinatorial objects (\emph{norm-coverings}, \emph{lifters}) and omit the more complicated categorical objects (\emph{larders}), for which formal definitions can be found in Gillibert and Wehrung~\cite{Larder}. We remind the reader that \pjs s and supported posets are defined in Section~\ref{S:CLL}.

We start with an auxiliary definition.

\begin{definition}\label{D:NormCov}
Let~$P$ be a poset.
\begin{itemize}
\item A \emph{norm-covering} of~$P$ consists of a \pjs~$X$ together with an isotone map $\partial\colon X\to P$.

\item An ideal~$\bx$ of~$X$ is \emph{sharp} if $\setm{\partial x}{x\in\bx}$ has a largest element, then denoted by~$\partial\bx$.

\item Then we set $\bX^=:=\setm{\bx\in\bX}{\partial\bx\text{ is not maximal}}$, for every set~$\bX$ of sharp ideals of~$X$.
\end{itemize}
\end{definition}

Now we can define lifters.

\begin{definition}\label{D:Lifter}
For an infinite cardinal~$\lambda$, a \emph{$\lambda$-lifter} of a poset~$P$ is a pair $(X,\bX)$, where~$X$ is a norm-covering of~$P$, $\bX$ is a set of sharp ideals of~$X$, and
\begin{enumerate}
\item Every principal ideal of~$\bX^=$ has cardinality $<\cf(\lambda)$.

\item For every map $S\colon\bX^=\to[X]^{<\lambda}$ there exists an isotone section~$\sigma\colon P\into\bX$ of~$\partial$ such that $S\sigma(a)\cap\sigma(b)\subseteq\sigma(a)$ for all $a<b$ in~$P$.

\item If $\lambda=\aleph_0$, then $X$ is supported.
\end{enumerate}
\end{definition}

\section{Open problems}\label{S:Pbs}

\subsection{Ladders}
Let us first restate Ditor's Problem~\cite{Dito84} about ladders (cf. Section~\ref{S:Con3}).

\begin{problem}\label{Pb:DitorLadd}
Does there exist a $3$-ladder of cardinality~$\aleph_2$?
\end{problem}

So far, we know that a positive solution to Problem~\ref{Pb:DitorLadd} is \emph{consistent with~$\mathsf{ZFC}$}, cf. Theorem~\ref{T:Ex3Ladd}. Even more, every universe of~$\mathsf{ZFC}$ has a ccc generic extension and a countably closed generic extension both satisfying the existence of a $3$-ladder of cardinality~$\aleph_2$.

In fact, Ditor asks in~\cite{Dito84} whether there exists a $k$-ladder of cardinality~$\aleph_{k-1}$ (which is the upper bound given by Kuratowski's Free Set Theorem), for $k\geq3$. Because of Theorem~\ref{T:Ex3Ladd}, we can also ask the following weaker variant.

\begin{problem}\label{Pb:kladd}
Let $k\geq3$ be an integer. Is the existence of a $k$-ladder of cardinality~$\aleph_{k-1}$ consistent with~$\mathsf{ZFC}$?
\end{problem}

\subsection{Well-foundedness of liftable posets}

We ask in Gillibert and Wehrung~\cite{Larder} whether, for any infinite cardinal~$\lambda$, any poset with a $\lambda$-lifter is well-founded. Equivalently, we ask whether $(\omega+1)^{\mathrm{op}}$ has no $\lambda$-lifter, for any infinite cardinal~$\lambda$. As explained at the end of Section~\ref{S:CLL}, this is related to the following question.

\begin{problem}\label{Pb:SuperErd}
Does there exist an infinite cardinal~$\kappa$ such that for any \emph{isotone} mapping $F\colon[\kappa]^{\les\aleph_0}\to[\kappa]^{\les\aleph_0}$, there exists a sequence $\famm{\kappa_n}{n<\omega}$ from~$\kappa$ such that $\kappa_n\notin F(\setm{\kappa_i}{i>n})$ for each $n<\omega$?
\end{problem}

Recall that the isotonicity assumption simply means that $x\subseteq y$ implies that $F(x)\subseteq F(y)$ for any countable subsets~$x$ and~$y$ of~$\kappa$. It can be easily seen that if we remove this isotonicity assumption, then the corresponding problem has no solution (cf. the end of the proof of Proposition~3-6.3 in Gillibert and Wehrung~\cite{Larder}). Fred Galvin and Pierre Gillibert observed independently that every cardinal~$\kappa$ solving positively Problem~\ref{Pb:SuperErd} should be greater than or equal to~$\aleph_\omega$.

\subsection{Critical points}

Our first problem about critical points asks whether these are \emph{absolute} (in the set-theoretical sense).

\begin{problem}\label{Pb:AbsCrPt}
Are there finitely generated varieties~$\cA$ and~$\cB$ of algebras, distinct natural numbers~$m$ and~$n$, and models~$\mathbf{M}_1$ and~$\mathbf{M}_2$ of~$\mathsf{ZFC}$ set theory, such that $\crit{\cA}{\cB}=\aleph_m$ in~$\mathbf{M}_1$ while~$\crit{\cA}{\cB}=\aleph_n$ in~$\mathbf{M}_2$?
\end{problem}

A related question is raised in Gillibert and Wehrung~\cite{Larder}, namely whether critical points can be calculated by a recursive algorithm. This would of course imply a negative answer to Problem~\ref{Pb:AbsCrPt}.

\begin{problem}\label{Pb:CrPtAl3}
Are there \pup{finitely generated} varieties~$\cA$ and~$\cB$ \pup{on finite signatures} such that $\crit{\cA}{\cB}=\aleph_3$?
\end{problem}

It is plausible that the construction of an example with $\crit{\cA}{\cB}=\aleph_3$ would involve $3$-ladders (cf. Problem~\ref{Pb:DitorLadd}) together with some form of three-dimensional amalgamation (see Section~\ref{S:Con3}).

\subsection{Kuratowski index}

We remind the reader that a challenging question, raised in Gillibert and Wehrung~\cite[Section~6]{GiWe1}, is to determine the Kuratowski indexes of the two finite lattices~$P$ and~$Q$ represented in Figure~\ref{Fig:lattices}. Another puzzling question is the following (cf. Notation~\ref{Not:arrRel}).

\begin{problem}\label{Pb:rightarrow7}
Does $(\aleph_4,4,\aleph_0)\rightarrow 7$?
\end{problem}

An even far more ambitious question is to ask whether one can determine the largest integer~$m$ such that~$\mathsf{ZFC}$ proves the relation $(\aleph_4,4,\aleph_0)\rightarrow m$. All we know so far is that $6\leq m<t_4$ (cf. Section~\ref{S:LargeFree}). This leaves plenty of room.

\end{document}